\theoremstyle{plain}
\newtheorem{theorem}{Theorem}[section]
\newtheorem{lemma}[theorem]{Lemma}
\newtheorem{proposition}[theorem]{Proposition}
\newtheorem{corollary}[theorem]{Corollary}
\newtheorem{observation}[theorem]{Observation}
\newtheorem{problem}[theorem]{Problem}
\theoremstyle{definition}
\newtheorem{definition}[theorem]{Definition}
\newtheorem{remark}[theorem]{Remark}
\newcommand{\cs}{cone smooth}
\newcommand{\supp}{\mathrm{supp}}
\newcommand{\dist}{\mathrm{dist}}
\newcommand{\C}{\mathrm{C}}
\newcommand{\SX}{S_{X^*}}
\newcommand{\Q}{\mathbb{Q}}
\newcommand{\R}{\mathbb{R}}
\newcommand{\N}{\mathbb{N}}
\newcommand{\K}{\mathcal{K}}
\renewcommand{\epsilon}{\varepsilon}
\renewcommand{\phi}{\varphi}
\renewcommand{\tilde}{\widetilde}
\newcommand{\B}{\mathcal{B}}
\newcommand{\F}{\mathcal{F}}
\def\norm#1{\|#1\|}
\def\norma#1{|\!|\!|#1|\!|\!|}
\DeclareMathOperator{\dens}{dens}
\DeclareMathOperator{\diam}{diam}
\newcommand{\vertiii}[1]{{\left\vert\kern-0.25ex\left\vert\kern-0.25ex\left\vert #1 
    \right\vert\kern-0.25ex\right\vert\kern-0.25ex\right\vert}}
\begin{document}

\title{Star-finite coverings of Banach spaces}

\author[C.A.~De~Bernardi]{Carlo Alberto De Bernardi}
\author[J.~Somaglia]{Jacopo Somaglia}
\author[L.~Vesel\'y]{Libor Vesel\'y}

\address{Dipartimento di Matematica per le Scienze economiche, finanziarie ed attuariali, Universit\`a Cattolica del Sacro Cuore, 20123 Milano,Italy}

\email{carloalberto.debernardi@unicatt.it}

\address{Dipartimento di Matematica ``F. Enriques'', Universit\`a degli Studi di Milano, via Cesare Saldini 50, 20133 Milano, Italy. }

\email{jacopo.somaglia@unimi.it}
\email{libor.vesely@unimi.it}

 \subjclass[2010]{Primary 46B20 ; Secondary 52A99, 46A45}

 \keywords{covering of normed space, Fr\'echet smooth body, locally uniformly rotund norm}

 \thanks{ All the authors are members of GNAMPA (INdAM). 
The research of the first and the third author was partially supported by GNAMPA (INdAM), Project 2018. The research of the second and the third author was partially supported by the University of Milan (Universit\`a degli Studi di Milano), Research Support Plan 2019.}

\begin{abstract} 
We study star-finite coverings of infinite-dimensional normed spaces. A family of sets is called star-finite if each of its members intersects only finitely many other members of the family. It follows by our results that an LUR or a uniformly Fr\'echet smooth infinite-dimensional Banach space does not admit star-finite coverings by closed balls. On the other hand, we present a quite involved construction proving existence of a star-finite covering of $c_0(\Gamma)$  by Fr\'echet smooth centrally symmetric bounded convex bodies.  A similar but simpler construction shows that every normed space of countable dimension (and hence incomplete) has a star-finite covering by closed balls.

\end{abstract}

\maketitle

\section{Introduction}

A family of subsets of a real normed space $X$ is called a {\em covering} if the union of all its members coincides with $X$.
One of the earliest results concerning coverings of infinite-dimensional spaces is Corson's theorem \cite{Cor61}, stating that if  $X$ is a reflexive infinite-dimensional Banach space and $\F$ is a covering of $X$ by bounded convex sets  then $\F$ is not locally finite (see Definition~\ref{D: finiteness}). V.P.~Fonf and C.~Zanco \cite{FZ06} improved this result by proving that {\em if a Banach space $X$ contains an infinite-dimensional closed subspace non containing $c_0$ then $X$ does not admit any locally finite covering by bounded closed convex bodies}. The same authors proved  in \cite{FZJMMA09} that  {\em if $X$ contains a separable infinite-dimensional dual space and if $\tau$ is a covering by bounded closed convex sets then there exists a finite-dimensional compact  set $C$ that meets infinitely many members of $\tau$}. Moreover, they proved in \cite{FZForum09} that, in the above result, if the members of $\tau$ are rotund or smooth then $C$ can be taken 1-dimensional. Let us recall that the prototype of a locally finite covering of an infinite-dimensional Banach space by closed convex bounded sets  is the covering (actually a tiling) of $c_0$ by translates of its unit ball, see \cite{lup88} for the details. (Recall that a {\em tiling} is a covering by bodies whose nonempty interiors are pairwise disjoint.)

The existing theory of point-finite coverings (see Definition~\ref{D: finiteness}) of infinite-dimen\-sional normed spaces is less developed and mainly concerns coverings by balls. 
A surprising construction discovered in 1981 by V.~Klee \cite{Klee1} shows 
existence of a simple (that is, disjoint, and hence point-finite) covering  of $\ell_1(\Gamma)$ by  closed balls of radius $1$, whenever $\Gamma$ is a suitable uncountable set. 
Though  the question about existence of point-finite coverings by balls of $\ell_p(\Gamma)$ spaces 
 was already considered by V.~Klee in the same paper, this problem was partially solved only  recently  for $\ell_2$ by
 V.P.~Fonf and C.~Zanco in \cite{FZHilbert}, in which they proved that {\em the infinite-dimensional separable Hilbert space does not admit point-finite coverings by closed balls of positive radius}. 
 Then V.P.~Fonf, M.~Levin and C.~Zanco \cite{FonfLevZan14} extended this  result to separable Banach spaces that are both uniformly smooth and uniformly rotund. We point out that Klee's  problem about coverings by closed balls seems to be  open in the non-separable case, even for Hilbert spaces. 

In the present paper, we consider a particular class of coverings of infinite-dimensional normed spaces, given by the property that each member intersects at most finitely many other members. Such coverings are known in the literature
as {\em star-finite coverings} (see \cite[p.~317]{Engel}), and singular points of star-finite (not necessarily convex) tilings of topological vector spaces were first studied in \cite{Breen85}, then generalized in  \cite{NielsenStarfinite}.
It is clear that each simple covering is star-finite and each star-finite covering is point-finite.
Moreover, the above-mentioned coverings by balls of $c_0$ and $\ell_1(\Gamma)$ easily show that there are no implications between star-finiteness and local finiteness of a covering.

Roughly speaking, all mentioned results concerning non-existence of point-finite or locally finite coverings are in some sense inspired by the following general principle.

\smallskip
  \noindent {\em Coverings in ``good'' (separable, reflexive, \ldots) infinite-dimensional Banach spaces whose  members  enjoy ``nice properties'' (smoothness, rotundity, \ldots) cannot satisfy  ``finiteness properties'' (local finiteness, point finiteness, \ldots).}
  
\smallskip
\noindent
 Hence, the first step in our study is to determine to what extent we can apply the same principle to star-finite coverings.
A careful reading of the proof of a result by A.~Marchese and C.~Zanco \cite{MarZan}, stating that  each Banach space has a 2-finite (see Definition~\ref{D: finiteness})  covering (actually a tiling)  by closed convex bounded bodies, reveals that the same argument actually proves that  {\em each Banach space admits a  covering  by closed convex bounded bodies such that each of its member intersects at most two other its members}. However, as noted by the authors, the elements of such a covering are far from being balls.  This fact together with Klee's construction in $\ell_1(\Gamma)$ suggest that, in order to obtain non-existence results, we should
restrict at first our attention to star-finite coverings by closed balls satisfying some rotundity or smoothness property. After some preliminaries and some general facts (Section~\ref{section:preliminaries}), we prove the main results in this direction in Section~\ref{section:prohibitive}: our Corollary~\ref{C: suffcond} implies that an infinite-dimensional Banach space $X$ does not admit any star-finite covering by closed balls whenever $X$  is uniformly Fr\'echet smooth or LUR. The techniques used in some of these proofs are inspired by the paper \cite{DEVETIL}. We also prove non existence of {\em countable} star-finite coverings by closed balls for a class of (subspaces of) spaces of continuous functions, which include, e.g., all infinite-dimensional $\ell_\infty(\Gamma)$ spaces. In the particular case of $c_0(\Gamma)$ ($\Gamma$ infinite), we show that it admits no (countable or not) star-finite covering by closed balls.

In Section~\ref{section:existenceresult}, we obtain a result in the opposite direction: we present a quite involved construction of a star-finite covering of every $c_0(\Gamma)$ space by Fr\'echet smooth centrally symmetric bounded bodies. The starting point of our construction is existence of an equivalent Fr\'echet smooth norm on $c_0(\Gamma)$ whose unit sphere contains many ``flat faces''
(see Proposition~\ref{P: bollapiatta}). We point out that a similar but simpler construction contained in Section~\ref{section:preliminaries} shows that every normed (necessarily incomplete) space of countable dimension has a star-finite covering by closed balls.
Proofs of some needed auxiliary facts are contained in the Appendix (Section~\ref{section:appendix}).

\medskip

\section{Preliminaries and some general facts}\label{section:preliminaries}

Throughout the paper, $\N$ denotes the set of strictly positive integers, while $\N_0:=\N\cup\{0\}$ is the set of nonnegative integers. Given a set $\Gamma$ and $n\in\N_0$, by $[\Gamma]^n$ we mean the set of all $n$-element subsets of $\Gamma$, and by $[\Gamma]^{<\infty}$  the set of all finite subsets of $\Gamma$.
Thus $[\Gamma]^{<\infty}=\bigcup_{n\ge0}[\Gamma]^n$.

We consider only nontrivial real normed spaces. If $X$ is a normed space then $X^*$ is its dual Banach space, and $B_X$ and $S_X$ are the closed unit ball and the unit sphere of $X$. Moreover, we denote by $B(x,\epsilon)$ and $U(x,\epsilon)$ the closed and the open ball with radius $\epsilon$ and center $x$, respectively.  By a {\em ball} in $X$ we mean a closed or open ball of positive radius in $X$. If $B\subset X$ is a ball then $c(B)$ and $r(B)$ denote its center and radius, respectively. Other notation is standard, and various topological notions refer to the norm topology of $X$, if not specified otherwise. A set $B\subset X$ will be called a {\em body} if it is closed, convex and has nonempty
interior. For $x,y\in X$, $[x,y]$ denotes the closed segment in $X$ with
endpoints $x$ and $y$, and $(x,y)=[x,y]\setminus\{x,y\}$ is the
corresponding ``open'' segment.

Let $\mathcal{F}$ be a family of nonempty sets in a normed space $X$. By $\bigcup \mathcal F$ we mean the union of all members of $\mathcal F$.
A point $x\in X$ is a {\em regular point} for $\mathcal F$ if it has a neighborhood that meets at most finitely many members of $\mathcal F$. 
Points that are not regular are called {\em singular}. 
Notice that the set of regular points is an open set.
For any $x\in X$ we denote
$$
\mathcal{F}(x):=\{F\in\mathcal{F}: x\in F\}.
$$
Thus $\mathcal F$ is a covering of $X$ if and only if $\mathcal{F}(x)\ne\emptyset$ for each $x\in X$. 

\begin{definition}\label{D: finiteness} The family $\mathcal F$ is called:
\begin{enumerate}[(a)]
\item {\em star-finite} if each of its members intersects only finitely many other members of $\mathcal F$ (cf. \cite[p. 317]{Engel}); \item {\em simple} if its members are pairwise disjoint;
\item {\em point-finite} ({\em point-countable}) [{\em $n$-finite} ($n\in\N$)] if each $x\in X$ is contained in at most finitely many (countably many) [$n$] members of $\mathcal F$;
\item {\em locally finite} if each $x\in X$ is a regular point for $\mathcal F$.   
\end{enumerate}
\end{definition}

It is evident that simple families are star-finite, and star-finite families are point-finite (and hence point-countable).

A {\em minimal covering} is a covering whose no proper subfamily is a covering. Notice that a covering need not contain any minimal subcovering (consider e.g.\ the covering consisting of $nB_X$, $n\in\N$). However, it is easy to see that the intersection of a chain of point-finite coverings is again a covering. Thus by Zorn's lemma {\em every point-finite (hence every star-finite) covering contains a minimal subcovering.}

\medskip

\subsection{Cardinality properties} 
Next results describe relations between the cardinality of certain coverings of a topological space $T$ and its density character $\mathrm{dens}(T)$ (i.e., the smallest cardinality of a dense subset of $T$).

\begin{lemma}
Let $T$ be an infinite Hausdorff topological space, and $\mathcal{F}$ a point-countable family of nonempty open subsets of $T$. Then $|\mathcal{F}|\le\mathrm{dens}(T)$.
\end{lemma}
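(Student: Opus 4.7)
The plan is to witness each member of $\mathcal{F}$ by a point of a fixed dense set and then count. Let $D\seq T$ be dense with $|D|=\mathrm{dens}(T)$. Since every $F\in\mathcal{F}$ is nonempty and open, we have $F\cap D\neq\emptyset$, so the axiom of choice provides a map $\phi\colon \mathcal{F}\to D$ with $\phi(F)\in F\cap D$ for each $F$.

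The next step is to control the fibers of $\phi$. For any $d\in D$, the fiber $\phi^{-1}(d)$ consists of sets $F\in\mathcal{F}$ containing $d$, so $\phi^{-1}(d)\seq\mathcal{F}(d)$, which is at most countable by point-countability of $\mathcal{F}$. Partitioning $\mathcal{F}$ over $D$ therefore gives
\[
|\mathcal{F}|=\sum_{d\in D}|\phi^{-1}(d)|\le |D|\cdot\aleph_0.
\]

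To conclude $|\mathcal F|\le\mathrm{dens}(T)$, I need $\mathrm{dens}(T)\ge\aleph_0$. This is where the Hausdorff hypothesis enters: if $D$ were finite in a Hausdorff (hence $T_1$) space, then $T\setminus D$ would be open and, by density of $D$, empty, forcing $T=D$ to be finite, contrary to assumption. Thus $|D|\ge\aleph_0$, giving $|D|\cdot\aleph_0=|D|=\mathrm{dens}(T)$, which completes the argument.

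There is no real obstacle here; the only point needing mild care is the verification that infinite Hausdorff spaces have infinite density character, which lets us absorb the $\aleph_0$ factor coming from point-countability.
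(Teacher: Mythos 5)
Your proof is correct and follows essentially the same route as the paper's: choose a witness in a fixed dense set $D$ for each member of $\mathcal{F}$, observe that the fibers of the resulting map are at most countable by point-countability, and absorb the factor $\aleph_0$ using the fact that $D$ is infinite. The paper states ``necessarily infinite'' for $D$ without elaboration and concludes via $|D\times\N|=|D|$; you merely spell out that verification (finite dense subsets of infinite $T_1$ spaces cannot exist), which is a nice touch but not a different argument.
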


\begin{proof}
Fix a dense (necessarily infinite) set $D\subset T$. For each $A\in\mathcal{F}$ choose some $f(A)\in D\cap A$, obtaining in this way a function $f\colon\mathcal{F}\to D$ such that the subfamilies $f^{-1}(d)\subset\mathcal F$, $d\in D$, are all at most countable. It is clear that these subfamilies are pairwise disjoint. Now we obtain
$$
|\mathcal{F}|=|\bigcup_{d\in D}f^{-1}(d)\,|\le |D\times\N|=|D|,
$$
which completes the proof by arbitrariness of the dense set $D$.
\end{proof}

\begin{observation}\label{O: card upper bound}
The above cardinality estimate applies whenever $\mathcal{F}$ is a point-finite family of sets with nonempty interior (and $T$ as above). Indeed, it suffices to consider the family 
$\mathcal{F}'=\{\mathrm{int}\,F: F\in\mathcal{F}\}$.
\end{observation}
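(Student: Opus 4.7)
My plan is to reduce the claim to the preceding lemma by passing to the family of interiors of the members of $\mathcal{F}$. I would set $\mathcal{F}':=\{\inte F:F\in\mathcal{F}\}$; since every $F\in\mathcal{F}$ has nonempty interior, $\mathcal{F}'$ is a family of nonempty open subsets of $T$. The first thing to verify is that $\mathcal{F}'$ is point-finite (hence point-countable): for each $x\in T$ one has
$$
\{U\in\mathcal{F}':x\in U\}\subseteq\{\inte F:F\in\mathcal{F}(x)\},
$$
and $\mathcal{F}(x)$ is finite by the point-finiteness of $\mathcal{F}$. The preceding lemma then yields $|\mathcal{F}'|\le\mathrm{dens}(T)$.

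The remaining issue—the one point that is not completely automatic—is that the map $F\mapsto\inte F$ need not be injective, since two distinct members of $\mathcal{F}$ could share the same interior, so passing from $|\mathcal{F}'|$ back to $|\mathcal{F}|$ requires a small extra step. I would handle this by a fiber estimate: for each $U\in\mathcal{F}'$ pick some $x_U\in U$; then every $F\in\mathcal{F}$ with $\inte F=U$ contains $x_U$, so the fiber $\{F\in\mathcal{F}:\inte F=U\}$ is a subfamily of $\mathcal{F}(x_U)$ and is therefore finite. Writing $\mathcal{F}$ as the disjoint union of these fibers indexed by $\mathcal{F}'$, I obtain
$$
|\mathcal{F}|\le\aleph_0\cdot|\mathcal{F}'|\le\aleph_0\cdot\mathrm{dens}(T)=\mathrm{dens}(T),
$$
where the last equality uses that $\mathrm{dens}(T)\ge\aleph_0$ for any infinite Hausdorff space (finite subsets of such a space are closed and discrete, hence cannot be dense when $T$ is infinite).

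The only real obstacle is this non-injectivity of the interior map; once it is neutralized by the fiber argument above, the observation follows immediately from the preceding lemma.
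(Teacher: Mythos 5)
Your argument is correct, and it carefully addresses a genuine subtlety that the paper's one-line hint glosses over: the map $F\mapsto\inte F$ need not be injective for an arbitrary family of sets with nonempty interior, so the inequality $|\mathcal{F}'|\le\dens(T)$ obtained from the lemma does not by itself bound $|\mathcal{F}|$. Your fiber estimate (each fiber $\{F\in\mathcal{F}:\inte F=U\}$ is contained in $\mathcal{F}(x_U)$, hence finite by point-finiteness) closes this gap cleanly, and the remaining cardinal arithmetic, together with the remark that $\dens(T)\ge\aleph_0$ for infinite Hausdorff $T$, is correct.

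That said, the paper's intended reading of ``consider the family $\mathcal{F}'$'' is probably slightly different and sidesteps the non-injectivity issue altogether: rather than applying the lemma to $\mathcal{F}'$ as a black box, one re-runs the lemma's proof directly on $\mathcal{F}$, choosing for each $F\in\mathcal{F}$ a point $f(F)\in D\cap\inte F$ (where $D$ is a dense set). Point-finiteness of $\mathcal{F}$ makes each fiber $f^{-1}(d)$ finite (since $d\in\inte F\subset F$ puts $F\in\mathcal{F}(d)$), so $|\mathcal{F}|\le|D\times\N|=|D|$. This is essentially the same counting idea as yours, just organized so that no separate correction for non-injectivity is needed. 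Both routes are valid; yours has the merit of making the potential pitfall explicit, while the paper's is more economical. In the paper's actual applications the members of $\mathcal{F}$ are bodies (closed convex sets with nonempty interior), for which $F=\overline{\inte F}$ and the interior map is injective anyway, which is likely why the authors did not dwell on the point.
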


Since we are interested in star-finite coverings of normed spaces by bodies, we will always have that the cardinality of such a covering is not greater than the density character of the space.

\begin{lemma}
Let $X$ be an infinite-dimensional normed space, $r>0$, and $\mathcal{B}$ a covering of $X$ by balls
of radius at most $r$. Then $|\mathcal{B}|\ge \mathrm{dens}(X)$.
\end{lemma}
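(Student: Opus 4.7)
The plan is to estimate $|\mathcal{B}|$ from below by looking at the set of centers of the balls in $\mathcal{B}$ and showing that it already has density $\dens(X)$.

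First I would set $C := \{c(B) : B \in \mathcal{B}\}$, so that $|C| \le |\mathcal{B}|$. Since the balls in $\mathcal{B}$ have radius at most $r$ and cover $X$, for every $x \in X$ there is some $B \in \mathcal{B}$ with $x \in B \subseteq B(c(B), r)$; in particular $\dist(x, C) \le r$ for each $x \in X$.

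Next I would argue that $C$ must span a dense subspace of $X$. Let $Y := \overline{\mathrm{span}}(C)$. The above gives $\dist(x, Y) \le \dist(x, C) \le r$ for all $x \in X$. Since $Y$ is a linear subspace, $\dist(\cdot, Y)$ is positively homogeneous, so if there were some $x_0 \in X \setminus Y$, then $\dist(n x_0, Y) = n\, \dist(x_0, Y) \to \infty$, contradicting the uniform bound $r$. Hence $Y = X$. Because $X$ is infinite-dimensional, $C$ cannot be finite (a finite set spans a finite-dimensional subspace, which is closed and proper in $X$), so $C$ is infinite.

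Finally, the set $D$ of all finite rational linear combinations of elements of $C$ is dense in $\mathrm{span}_\R(C)$, hence dense in $X$, and its cardinality is at most $|[C]^{<\infty} \times \Q^{<\infty}| = |C|$ since $C$ is infinite. Therefore
\[
\dens(X) \le |D| \le |C| \le |\mathcal{B}|,
\]
as desired. The only non-routine observation is the scaling argument yielding $\overline{\mathrm{span}}(C) = X$; everything else is standard cardinal arithmetic, so I do not expect any serious obstacle in carrying this out.
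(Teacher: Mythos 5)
Your proof is correct, but it takes a genuinely different route from the one in the paper. The paper's argument is purely metric: it fixes a maximal $3r$-dispersed set $E$ in $X$ (so $E$ is automatically a $3r$-net, and infinite because $X$ is infinite-dimensional), shows that $D:=\bigcup_{n\in\N}(1/n)E$ is dense so that $|E|=|D|\ge\dens(X)$, and then observes that each ball of radius at most $r$ has diameter at most $2r<3r$ and therefore meets $E$ in at most one point, giving $|\mathcal{B}|\ge|E|$. You instead extract the set $C$ of centers, note that $C$ is an $r$-net, use positive homogeneity of $\dist(\cdot,\overline{\mathrm{span}}\,C)$ to conclude $\overline{\mathrm{span}}\,C=X$, and bound $\dens(X)\le|\mathrm{span}_\Q C|=|C|\le|\mathcal{B}|$.

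Both proofs hinge on a scaling trick to upgrade a set that is merely ``$r$-dense'' to a genuinely dense one, but they apply it to different objects (a dispersed set versus the linear span of the centers), and the cardinality bookkeeping is different. A point worth noting: the paper's argument never uses that the covering consists of balls --- only that the members have uniformly bounded diameter --- so it actually proves the stronger statement that any covering of $X$ by nonempty sets of diameter at most $2r$ has cardinality at least $\dens(X)$. Your argument, by contrast, crucially uses the existence of centers and hence is tied to balls (though it sidesteps the notion of a maximal dispersed set and the attendant appeal to Zorn's lemma, replacing it with the standard facts that finite-dimensional subspaces are closed and that rational spans are dense). Your route also echoes the totality argument the paper uses later in Proposition~\ref{p: wstardensityCardinality}, so it fits naturally with the surrounding material.
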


\begin{proof}
Let $E\subset X$ be a maximal $3r$-dispersed set, that is: $\|y-z\|\ge 3r$ for any distinct $y,z\in E$,
and for each $x\in X$ there is $y\in E$ such that $\|x-y\|<3r$. Then the set $D:=\bigcup_{n\in\N}(1/n)E$
is dense and $|E|=|D|\ge\mathrm{dens}(X)$. Since 
$E\subset\bigcup\mathcal{B}$ and
each member of $\mathcal{B}$
contains at most one element of $E$, we conclude that $|\mathcal{B}|\ge|E|\ge\mathrm{dens}(X)$.
\end{proof}

Let $X$ be a normed space. Recall that a set $A\subset X^*$ is {\em total} if
$^\perp A:=\bigcap_{x^*\in A}\mathrm{Ker}(x^*)=\{0\}$. Thus if $A$ is total then
$\overline{\mathrm{span}}^{w^*}\,A=(^\perp A)^\perp=X^*$. It follows that if $A$ is total and infinite then
$$
w^*\text{-}\mathrm{dens}(X^*):=\mathrm{dens}(X^*,w^*)\le|\mathrm{span}_\Q\,A|=|A|\,,
$$
where $\mathrm{span}_\Q\,A$ is the ``rational span'' of $A$.

\begin{proposition}\label{p: wstardensityCardinality}
Let $X$ be an infinite-dimensional normed space. Suppose that $X$ admits a covering $\B$ by closed bounded convex sets  such that some $x_0\in X$ belongs to only finitely many elements of $\B$. Then $w^*\text{\rm-dens}(X^*)\leq|\B|$.
\end{proposition}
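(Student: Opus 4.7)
The plan is to build an explicit total subset $A$ of $X^*$ of cardinality at most $|\B|$ using Hahn--Banach separation of $x_0$ from those members of $\B$ that do not contain $x_0$, and then to invoke the cardinality bound $w^*\text{-}\dens(X^*)\le|A|$ stated just before the proposition.

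First I would set $\B_0:=\B(x_0)$, which is finite by hypothesis, and observe that $\B\setminus\B_0$ covers $X\setminus\{x_0\}$. For each $B\in\B\setminus\B_0$, since $B$ is closed, convex and bounded and $x_0\notin B$, Hahn--Banach provides $f_B\in X^*$ and $c_B\in\R$ such that
\[
\sup f_B(B)\le c_B<f_B(x_0).
\]
Put $A:=\{f_B:B\in\B\setminus\B_0\}\subset X^*$. The obvious surjection $B\mapsto f_B$ gives $|A|\le|\B\setminus\B_0|\le|\B|$.

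Next I would verify that $A$ is total. Take any $y\in X\setminus\{0\}$ and consider the ray $\{x_0+ty:t>0\}$. Since $\B_0$ is finite and consists of bounded sets, $\bigcup\B_0$ is bounded, so there exists $t>0$ with $x_0+ty\notin\bigcup\B_0$; as $\B$ covers $X$, there is some $B\in\B\setminus\B_0$ containing $x_0+ty$. Then
\[
f_B(x_0)+tf_B(y)=f_B(x_0+ty)\le c_B<f_B(x_0),
\]
which forces $tf_B(y)<0$, whence $f_B(y)\ne 0$. This shows $\bigcap_{f\in A}\ker f=\{0\}$, i.e.\ $A$ is total.

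Finally, since $X$ is infinite-dimensional, no finite subset of $X^*$ can be total (its common kernel has finite codimension, hence is nonzero), so $A$ is infinite. The inequality quoted before the proposition then yields $w^*\text{-}\dens(X^*)\le|\textrm{span}_\Q A|=|A|\le|\B|$, as required. The only subtle point (and the main thing to handle carefully) is the totality step: one must use both the boundedness of the finitely many members of $\B_0$ to push $x_0+ty$ out of $\bigcup\B_0$, and the \emph{strict} separation $c_B<f_B(x_0)$ to conclude $f_B(y)\ne 0$ rather than merely $f_B(y)\le 0$.
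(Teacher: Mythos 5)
Your proof is correct and takes essentially the same route as the paper's: separate $x_0$ from each member of $\B$ not containing it via Hahn--Banach, show the resulting family of functionals is total, and apply the preliminary cardinality estimate $w^*\text{-}\dens(X^*)\le|A|$. The only cosmetic difference is that the paper first translates $x_0$ to the origin and rescales so that $S_X\subset\bigcup(\B\setminus\B(0))$, which lets it verify totality by testing points of $S_X$ directly rather than pushing along rays $x_0+ty$ as you do.
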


\begin{proof}
By translation we may (and do) assume that $x_0=0$. 
Define $\B':=\B\setminus\B(0)$. Since $\bigcup\B(0)$ is bounded, by homogeneity we can (and do) assume that $S_X\subset \bigcup\B'$.
Set $\B^{''}\coloneqq\{B\in \B': B\cap S_X\neq\emptyset\}$. By the Hahn-Banach theorem, for each $B\in\B''$ there exists $x^*_B\in S_{X^*}$ such that $0=x^*_B(0)<\inf x^*_B(B)$. Since $S_X\subset \bigcup \B''$,
the family $\{x^*_B\}_{B\in\B''}$ is total and hence infinite. Consequently,
$w^*\text{\rm-dens}(X^*)\leq|\B''|\le|\B|$.
\end{proof}

From the previous result we deduce the exact size of a point-finite (star-finite) covering for a wide class of Banach spaces, more precisely the class of weakly Lindel\"of determined Banach spaces (WLD). The class of WLD Banach spaces, that generalizes the class of WCG Banach spaces, has been studied first in \cite{ArgMer} (see also \cite{Hajeketal} for more details).

\begin{corollary}
Let $X$ be a WLD Banach space. Suppose that $\B$ is a point-finite covering by bodies of $X$. Then $\dens(X)=|\B|$.
\end{corollary}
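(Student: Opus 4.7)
The strategy is to sandwich $|\B|$ between two copies of $\dens(X)$. The upper estimate $|\B|\le\dens(X)$ follows immediately from Observation~\ref{O: card upper bound}, since $\B$ is point-finite, every body has nonempty interior, and $X$ (being an infinite-dimensional Banach space) is an infinite Hausdorff topological space.

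For the reverse inequality I would fix any $x_0\in X$; by point-finiteness only finitely many elements of $\B$ contain $x_0$, so Proposition~\ref{p: wstardensityCardinality} applies and yields $\wdens(X^*)\le|\B|$. The WLD hypothesis enters here through the standard identity $\dens(X)=\wdens(X^*)$, valid for every WLD Banach space (see, e.g., \cite{Hajeketal}); combining this identity with the previous inequality gives $\dens(X)\le|\B|$, and together with the upper bound this yields the desired equality $\dens(X)=|\B|$.

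The one point that needs care is the appeal to the equality $\dens(X)=\wdens(X^*)$ for WLD spaces, which is standard in the theory but not developed in the paper itself and so must be quoted from the literature. One also reads ``body'' here as implicitly requiring boundedness, consistently with the hypothesis of Proposition~\ref{p: wstardensityCardinality} (the statement genuinely needs this: two complementary closed half-spaces form a point-finite covering by unbounded bodies of, e.g., $\ell_2$, and for such a covering the conclusion of the corollary fails).
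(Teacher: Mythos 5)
Your proof coincides with the paper's: the upper bound $|\B|\le\dens(X)$ comes from Observation~\ref{O: card upper bound}, and the lower bound from Proposition~\ref{p: wstardensityCardinality} combined with the WLD identity $\dens(X)=\wdens(X^*)$ quoted from \cite[Proposition 5.40]{Hajeketal}. Your closing remark is a genuine observation: the paper's definition of ``body'' does not include boundedness, whereas Proposition~\ref{p: wstardensityCardinality} requires the covering to consist of \emph{bounded} closed convex sets, so the corollary as literally stated needs ``bounded bodies'' (your half-space example on $\ell_2$ does falsify it otherwise) --- a tacit assumption that the paper's own one-line proof shares.
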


\begin{proof}
By Observation~\ref{O: card upper bound} we have $|\B|\leq \dens(X)$. The other inequality follows combining Proposition \ref{p: wstardensityCardinality} with the fact that
$\mathrm{dens}(X)=w^*\text{\rm-dens}(X^*)$ (see \cite[Proposition 5.40]{Hajeketal}).
\end{proof}

\smallskip

\subsection{Structure properties} Let us state some simple properties of star-finite coverings, which will be used in the sequel.

\begin{observation}\label{O: basic}
Let $\mathcal F$ be a star-finite covering by closed sets of a normed space $X$. Then it 
has the following properties.
\begin{enumerate}[(a)]
\item The set $D:=\bigcup_{F\in\mathcal{F}}\partial F$ is closed.
\item A point $x\in X$ is regular for $\mathcal{F}$ if and only if
$x\in\mathrm{int}\,[\bigcup \mathcal{F}(x)]$.
\item If $x$ is a singular point of $\mathcal F$ then $x\in\bigcap_{F\in\mathcal{F}(x)}\partial F$.
\item If $\mathcal F$ is countable then $H:=\{x\in D: |\mathcal{F}(x)|=1\}$ is a $G_\delta$ set.
\end{enumerate}
\end{observation}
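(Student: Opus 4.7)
My plan is to extract a single observation that drives (a) and (c), then settle (b) and (d) separately. The observation is: for every $x\in X$ the family $\mathcal{F}(x)$ is finite, and if $x\in\mathrm{int}\,F$ for some $F\in\mathcal{F}(x)$ then $x$ is a regular point. To see the first half, fix any $F_0\in\mathcal{F}(x)$; every member of $\mathcal{F}(x)$ contains $x$ and hence intersects $F_0$, so by star-finiteness only finitely many such members exist. For the second half, if $U(x,\varepsilon)\subseteq F$, then any $G\in\mathcal{F}$ meeting $U(x,\varepsilon)$ also meets $F$, and $F$ is met by only finitely many members of $\mathcal{F}$. The second half is precisely the contrapositive of (c), so (c) is immediate.

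For (a), I would take $x_n\to x$ with $x_n\in\partial F_n$, $F_n\in\mathcal{F}$. If some value $F$ appears infinitely often among the $F_n$, then $x\in\overline{\partial F}=\partial F\subseteq D$. Otherwise I would pass to a subsequence (without relabelling) in which the $F_n$ are pairwise distinct: if some $F\in\mathcal{F}(x)$ satisfied $x\in\mathrm{int}\,F$, then $x_n\in F$ for large $n$, so $F\cap F_n\ne\emptyset$ for infinitely many distinct $F_n$, contradicting star-finiteness of $F$. Hence $x\in\partial F$ for every $F\in\mathcal{F}(x)$, and in particular $x\in D$.

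For (b), the implication ``$\Leftarrow$'' is another direct application of the opening observation: if $U(x,\varepsilon)\subseteq\bigcup\mathcal{F}(x)$, then any $G\in\mathcal{F}$ meeting $U(x,\varepsilon)$ meets one of the finitely many members of $\mathcal{F}(x)$, and each of these is met by only finitely many members of $\mathcal{F}$. For ``$\Rightarrow$'', given a neighborhood $V$ of $x$ meeting only $G_1,\dots,G_m\in\mathcal{F}$, the covering property forces $V\subseteq\bigcup_{i=1}^m G_i$; subtracting the closed set $\bigcup\{G_i:x\notin G_i\}$ from $V$ produces an open neighborhood of $x$ contained in $\bigcup\mathcal{F}(x)$.

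Finally, for (d), I would set $S:=\{x:|\mathcal{F}(x)|\ge 2\}=\bigcup\{F\cap G:F,G\in\mathcal{F},\,F\ne G\}$, which is $F_\sigma$ under the countability assumption, so $X\setminus S$ is $G_\delta$. Since $\mathcal{F}$ covers $X$, one checks $H=D\cap(X\setminus S)$; combined with $D$ closed from (a), this exhibits $H$ as a $G_\delta$. I expect the only delicate point to be the distinct-$F_n$ case of (a): one must recognize that star-finiteness of a single fixed member of $\mathcal{F}(x)$ already precludes $x$ from lying in its interior once many distinct boundary pieces accumulate at $x$.
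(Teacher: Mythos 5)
Your proof is correct, and the overall skeleton — finiteness of $\mathcal{F}(x)$ from star-finiteness, the implication ``$x\in\mathrm{int}\,F$ for some $F\in\mathcal{F}(x)$ implies $x$ regular'' as the engine for both (b)$\Leftarrow$ and (c), and the description of $H$ via the $F_\sigma$ set of multiply-covered points — matches the paper's. The one place where you take a genuinely different route is part (a): the paper argues directly that the complement of $D$ is open, by taking any $x\notin D$ and exhibiting the explicit open neighborhood $\bigl[\bigcap_{F\in\mathcal{F}(x)}\mathrm{int}\,F\bigr]\setminus\bigcup\{G\in\mathcal F: G\notin\mathcal{F}(x),\ G\cap\bigcup\mathcal{F}(x)\ne\emptyset\}$ (a finite union of closed sets is being removed, by star-finiteness); you instead argue sequentially, taking $x_n\to x$ with $x_n\in\partial F_n$ and splitting into the two cases ``some $F$ repeats infinitely'' and ``the $F_n$ are eventually pairwise distinct''. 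Both are sound; your sequential argument is slightly more hands-on and perhaps easier to audit, while the paper's neighborhood construction gives a uniform estimate that feeds directly into the $\Leftarrow$ direction of (b) (the paper reuses the same construction there, starting from $U=\mathrm{int}[\bigcup\mathcal{F}(x)]$). Everything else — including noting that for a covering $|\mathcal{F}(x)|=1$ is equivalent to $x$ lying outside $\bigcup_{F\ne G}(F\cap G)$, and that a closed set in a metric space is $G_\delta$ — is the same argument as in the paper.
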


\begin{proof}
(a) If $x\notin D$ then $x\in U:=\bigcap_{F\in\mathcal{F}(x)}\mathrm{int}\,F$. Since
$\mathcal{F}(x)$ contains only finitely many sets each of which intersects only finitely many members of $\mathcal{F}\setminus\mathcal{F}(x)$, it follows that 
the set $U\setminus \bigcup[\mathcal{F}\setminus\mathcal{F}(x)]$ is an open neighborhood of $x$ which is disjoint from $D$.
This proves that $X\setminus D$ is open.

(b) The implication ``$\Leftarrow$'' follows in a similar way to (a), now starting from the set
$U:=\mathrm{int}\,[\bigcup \mathcal{F}(x)]$. To show the other implication, assume that $x$ is a regular point for $\mathcal F$, that is, there exists an open neighborhood $V$ of $x$ for which the subfamily
$\{F\in\mathcal{F}: F\cap V\ne\emptyset\}$ is finite. Now star-finiteness of $\mathcal F$ easily implies that there exists a neighborhood $U\subset V$ of $x$ such that $U$ is contained in $\bigcup\mathcal{F}(x)$.

(c) If $x$ is singular then $x\notin\mathrm{int}[\bigcup\mathcal{F}(x)]$ by (b), and hence
$x\notin\bigcup_{F\in\mathcal{F}(x)}(\mathrm{int}\,F)$. Thus $x\in\bigcap_{F\in\mathcal{F}(x)}\partial F$.

(d) Write $\mathcal{F}=\{F_n\}_{n\in\N}$. Then 
$D\setminus H=D\cap\bigcup_{m\ne n}(F_m\cap F_n)$ is an $F_\sigma$ set in $D$, hence $H$ is $G_\delta$ in $D$. Since $D$ is $G_\delta$ in $X$, it follows that $H$ is $G_\delta$ in $X$.
\end{proof}

\begin{lemma}\label{L: one ball}
Let $C_1,\dots, C_n$ and $B$ be closed convex sets in an infinite-dimen\-sional normed space $X$. If
$B$ is bounded and $\{C_i\}_1^n$
does not cover $B$ then $\partial B\setminus\bigcup_{i=1}^n C_i$ is weakly dense in
$B\setminus\bigcup_{i=1}^n C_i$. In particular, $\{C_i\}_1^n$
does not cover $\partial B$.
\end{lemma}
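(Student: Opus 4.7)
The plan is to show that every $x \in B\setminus\bigcup_{i=1}^n C_i$ lies in the weak closure of $\partial B\setminus\bigcup_{i=1}^n C_i$. Fix such an $x$ and a basic weakly open neighborhood
\[
V=\{z\in X : |z_j^*(z-x)|<\delta,\ j=1,\dots,m\}
\]
of $x$, for some $z_1^*,\dots,z_m^*\in X^*$ and $\delta>0$. Since each $C_i$ is closed, convex, and misses $x$, the Hahn--Banach separation theorem yields $x_i^*\in X^*$ with $x_i^*(x)<\alpha_i\le x_i^*(C_i)$ for some $\alpha_i\in\R$.

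Now use infinite-dimensionality: the subspace
\[
Y:=\bigcap_{i=1}^n\ker x_i^*\ \cap\ \bigcap_{j=1}^m\ker z_j^*
\]
has finite codimension in $X$, so $Y$ is infinite-dimensional, and in particular contains some $y\ne 0$. Consider the ray $\{x+ty : t\ge 0\}$ and set $t^*:=\sup\{t\ge 0 : x+ty\in B\}$. Boundedness of $B$ forces $t^*<\infty$, and closedness forces $p:=x+t^*y\in B$. I claim $p\in\partial B$: if $t^*>0$ this holds because $x+(t^*+\epsilon)y\notin B$ for every $\epsilon>0$; if instead $t^*=0$, then $x$ cannot lie in $\interior B$ (otherwise $x+\epsilon y\in B$ for small $\epsilon>0$), so $p=x\in\partial B$ anyway.

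It remains to check that $p\in V$ and $p\notin\bigcup_{i=1}^n C_i$. Since $y\in\ker z_j^*$, we have $z_j^*(p-x)=t^*z_j^*(y)=0<\delta$ for every $j$, so $p\in V$. Since $y\in\ker x_i^*$, we have $x_i^*(p)=x_i^*(x)<\alpha_i\le\inf x_i^*(C_i)$, so $p\notin C_i$ for every $i$. Hence $p\in (\partial B\setminus\bigcup_{i=1}^n C_i)\cap V$, proving weak density. The ``in particular'' assertion follows at once: the assumption that $\{C_i\}$ does not cover $B$ makes $B\setminus\bigcup_{i=1}^n C_i$ nonempty, so its weakly dense subset $\partial B\setminus\bigcup_{i=1}^n C_i$ is also nonempty.

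The ingredients are all standard; the only delicate point is the case analysis for $p\in\partial B$ when the supremum $t^*$ vanishes, which I handle by observing that $t^*=0$ already forces $x\in\partial B$. The conceptual heart of the proof is that infinite-dimensionality provides a nontrivial direction $y$ simultaneously orthogonal to both the finitely many separating functionals (so the whole ray avoids every $C_i$) and the finitely many functionals defining $V$ (so the whole ray stays in $V$), and then boundedness of $B$ guarantees the ray hits $\partial B$.
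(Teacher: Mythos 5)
Your proof is correct. It takes the same underlying idea as the paper's proof --- that infinite-dimensionality provides a nontrivial direction along which one can move while keeping the relevant finitely many functionals constant, and that boundedness of $B$ forces such a ray to exit through $\partial B$ --- but you carry it out in a direct, constructive form, whereas the paper argues by contradiction. Concretely, the paper assumes some $x\in B\setminus\bigcup C_i$ is not in the weak closure, absorbs both $\overline{\partial B\setminus\bigcup C_i}^{\,w}$ and $\bigcup C_i$ into a single weakly closed set $E$, takes a weak neighborhood $W$ of $x$ disjoint from $E$, and derives the contradiction that $W$ contains a line yet misses $\partial B$. You instead apply Hahn--Banach separation to produce functionals $x_i^*$ separating $x$ from each $C_i$, intersect their kernels with those of the functionals defining $V$ to get a direction $y$, and explicitly locate the boundary point $p$ on the ray $x+\R_{\ge 0}y$; your handling of the degenerate case $t^*=0$ (i.e.\ $x$ already on $\partial B$) is correct. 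The two routes buy roughly the same thing; the paper's version is slightly shorter because bundling the $C_i$ into $E$ avoids having to track the separating functionals explicitly, while your version makes the geometry more transparent and produces an explicit witness $p$.
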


\begin{proof}
Let $B$ have interior points (otherwise there is nothing to prove).
Proceeding by contradiction, assume there exists $x\in B\setminus\bigcup_{i=1}^n C_i$
which does not belong to $\overline{\partial B\setminus\bigcup_{i=1}^n C_i}^{w}$. We have
$$\textstyle
\partial B\subset \overline{\partial B\setminus\bigcup_{i=1}^n C_i}^{w}\,\cup
\bigcup_{i=1}^n C_i\,=:E,
$$
where $E$ is a weakly closed set that does not contain $x$. Let $W$ be a weak neighborhood of $x$ which is disjoint from $E$. But then $W\cap \partial B=\emptyset$, which is impossible since $W$ contains a line.
This contradiction completes the proof.
\end{proof}

\begin{corollary}\label{C: one ball}
Let $\mathcal{B}$ be a minimal star-finite covering by bounded closed convex sets of an infinite-dimensional normed space $X$.
Then the boundary of each $B\in\mathcal{B}$ contains a nonempty relatively open set which does not meet other members of $\mathcal B$.
\end{corollary}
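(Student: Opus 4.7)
The plan is to use the finitely many ``neighbours'' of $B$ given by star-finiteness together with minimality to produce a point of $B$ avoiding all other members, and then apply Lemma~\ref{L: one ball} to push this point onto $\partial B$.

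Fix $B\in\mathcal{B}$. By star-finiteness, there are only finitely many members of $\mathcal{B}\setminus\{B\}$ that meet $B$; call them $B_1,\dots,B_n$. By minimality of $\mathcal{B}$, the subfamily $\mathcal{B}\setminus\{B\}$ fails to be a covering, so there exists $x\in X$ with $x\notin\bigcup[\mathcal{B}\setminus\{B\}]$. Such an $x$ must lie in $B$ (since $\mathcal{B}$ is a covering), and in particular $x\in B\setminus\bigcup_{i=1}^n B_i$.

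Now I apply Lemma~\ref{L: one ball} to the bounded closed convex set $B$ and the closed convex sets $B_1,\dots,B_n$: these do not cover $B$ (as witnessed by $x$), so $\partial B\setminus\bigcup_{i=1}^n B_i$ is nonempty (this is the ``in particular'' part of the lemma). Since $B_1,\dots,B_n$ are closed, the set $\partial B\setminus\bigcup_{i=1}^n B_i$ is relatively open in $\partial B$.

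It remains to observe that this relatively open set does not meet any element of $\mathcal{B}\setminus\{B\}$. Indeed, by our choice of $B_1,\dots,B_n$, every other member of $\mathcal{B}\setminus\{B\}$ is disjoint from $B$, hence disjoint from $\partial B$. So any $B'\in\mathcal{B}\setminus\{B\}$ either equals some $B_i$ (and is excluded by construction) or does not meet $\partial B$ at all. This yields the desired nonempty relatively open subset of $\partial B$, and no real obstacle arises beyond correctly packaging the two ingredients (star-finiteness plus minimality, and Lemma~\ref{L: one ball}).
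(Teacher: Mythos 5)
Your proof is correct and follows essentially the same route as the paper: use star-finiteness to isolate the finitely many neighbours of $B$, use minimality to find a point of $B$ not covered by $\mathcal{B}\setminus\{B\}$, and then invoke Lemma~\ref{L: one ball} to push that onto $\partial B$. You supply a bit more detail (why the set is relatively open, and why the non-neighbouring members are automatically excluded), but the argument is the same.
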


\begin{proof}
Given $B$, let $C_1,\dots, C_n$ be the members of $\mathcal{B} \setminus\{B\}$ that intersect $B$.
By minimality, $\{C_i\}_1^n$ does not cover $B$. By Lemma~\ref{L: one ball}, 
$\partial B\setminus\bigcup_{i=1}^n C_i\ne\emptyset$.
\end{proof}

\smallskip

\subsection{Covering normed spaces of countable dimension} In the rest of this section we will show that each normed space with countable dimension can be covered by a star-finite family of closed balls. This result is achieved by covering inductively a nested sequence of finite-dimensional subspaces.\\
Let $A$ be a set in a metric space $(X,d)$, and let $\delta>0$. Recall that a set $E\subset X$ is a
{\em $\delta$-net} for $A$ if $\mathrm{dist}(a,E)<\delta$ for each $a\in A$. 

In what follows, we shall use several times the following simple fact.

\begin{observation}\label{O: ball reduction}
Let $Z$ be a convex subset of  a normed space $X$. Let $B_1$ and $B_2$ be two closed balls in $X$, such that $c(B_1),c(B_2)\in Z$, then $B_1\cap B_2=\emptyset$ if and only if $Z\cap B_1\cap B_2=\emptyset$.
\end{observation}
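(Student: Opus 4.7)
The plan is to note that one implication is trivial and to handle the other by finding an explicit witness point to the intersection that lies on the segment between the two centers, which belongs to $Z$ by convexity.

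Indeed, the implication ``$B_1\cap B_2=\emptyset \Rightarrow Z\cap B_1\cap B_2=\emptyset$'' is immediate. For the reverse implication, I would assume $B_1\cap B_2\ne\emptyset$ and prove the stronger statement that the segment $[c(B_1),c(B_2)]$ already meets $B_1\cap B_2$; since $Z$ is convex and contains both centers, this segment lies in $Z$, and the observation follows at once.

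To produce the witness point, set $d:=\|c(B_1)-c(B_2)\|$, $r_i:=r(B_i)$. Pick any $x\in B_1\cap B_2$; the triangle inequality yields
\[
d \le \|c(B_1)-x\| + \|x-c(B_2)\| \le r_1+r_2.
\]
If $d\le r_1$ then $c(B_2)\in B_1\cap B_2$, and $c(B_2)\in Z$, so we are done (similarly if $d\le r_2$). Otherwise $d>\max\{r_1,r_2\}$, and I set
\[
p := c(B_1) + \tfrac{r_1}{d}\bigl(c(B_2)-c(B_1)\bigr)\in[c(B_1),c(B_2)].
\]
Then $\|p-c(B_1)\|=r_1$ so $p\in B_1$, and $\|p-c(B_2)\|=d-r_1\le r_2$ so $p\in B_2$. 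Convexity of $Z$ gives $p\in Z$, hence $p\in Z\cap B_1\cap B_2$.

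There is essentially no obstacle here; the only thing to keep in mind is that one cannot simply take the midpoint of the centers (if the radii differ substantially the midpoint may lie outside the smaller ball), which is precisely why the convex combination is weighted by $r_1/d$.
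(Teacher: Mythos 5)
Your proof is correct and follows the same approach as the paper's, which simply observes that two balls intersect if and only if they intersect on the segment connecting their centers; you supply the explicit witness point on that segment, which the paper leaves as an easy exercise.
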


\begin{proof}
The proof is done observing that two balls intersect if and only if the distance of their centers is not greater than the sum of their radii if and only if the balls intersect in the segment connecting the centers.
\end{proof}

The key step in the proof of Theorem \ref{T: star-finite countable dimension} is the next lemma, which proves that each open subset $A$ of a finite-dimensional normed space admits a star-finite covering by closed balls whose singular points accumulate on the boundary of $A$. 

\begin{lemma}\label{L: totallybounded}
Let $X$ be a normed space, and $Y\subset X$ a finite-dimensional subspace. Let $C\subset Y$ be  a closed set such that $Y\setminus C\ne\emptyset$. Then there exists a star-finite family $\B$ of closed balls of $X$ such that:
\begin{enumerate}[(a)]
\item  $c(B)\in Y$ and $B\cap C=\emptyset$ for each $B\in\B$;
\item $Y\setminus C\subset\bigcup\B$;
\item the singular points of $\mathcal B$ are contained in $C$.
\end{enumerate}  
\end{lemma}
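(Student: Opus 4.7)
The plan is to build $\mathcal{B}$ as a disjoint union $\bigcup_{n\ge1}\mathcal{B}_n$ of finite families, one for each layer of a compact exhaustion of the open set $U:=Y\setminus C\subset Y$, in such a way that balls of layer $n$ stay safely inside layer $n{+}1$ and safely outside layer $n{-}2$, which will make the covering automatically star-finite and force every singular point to lie in $C$.

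\textbf{Setup.} Since $Y$ is finite-dimensional, the open set $U$ is locally compact and $\sigma$-compact. Set
\[
K_n\;=\;\{y\in Y:\mathrm{dist}(y,C)\ge 1/n\}\cap nB_Y\qquad(n\in\N),\qquad K_0:=\emptyset.
\]
A direct check (using that $\mathrm{dist}(\cdot,C)$ is $1$-Lipschitz) gives $K_n\subset\mathrm{int}_Y K_{n+1}$ and $\bigcup_{n\in\N}K_n=U$; in particular $K_{n-2}\subset\mathrm{int}_Y K_{n-1}$ for every $n\ge3$.

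\textbf{Inductive construction of $\mathcal{B}_n$.} For each $n\ge 1$ the set $A_n:=K_n\setminus\mathrm{int}_YK_{n-1}$ is compact. For every $y\in A_n$ the three quantities
\[
\rho(y)=\mathrm{dist}(y,C),\qquad \mathrm{dist}_Y(y,\,Y\setminus\mathrm{int}_Y K_{n+1}),\qquad \mathrm{dist}_Y(y,K_{n-2})
\]
are strictly positive (the last one being $+\infty$ when $K_{n-2}=\emptyset$), so we may choose
\[
0<r_y\le\min\Bigl\{\tfrac{1}{n},\ \tfrac{\rho(y)}{2},\ \mathrm{dist}_Y(y,Y\setminus\mathrm{int}_Y K_{n+1}),\ \mathrm{dist}_Y(y,K_{n-2})\Bigr\}.
\]
The open balls $U_Y(y,r_y)$ cover the compact set $A_n$, so finitely many of them, say at centers $y_{n,1},\dots,y_{n,m_n}$, suffice. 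Define $\mathcal{B}_n:=\{B_X(y_{n,i},r_{y_{n,i}}):1\le i\le m_n\}$ and $\mathcal{B}:=\bigcup_{n\ge1}\mathcal{B}_n$.

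\textbf{Verification.} Conditions (a) and (b) are immediate: the centers lie in $Y$; since $r_y<\rho(y)$ and $C\subset Y$, each ball is disjoint from $C$; and any $y\in U$ lies in $A_n$ for the minimal $n$ with $y\in K_n$, hence in some chosen $U_Y(y_{n,i},r_{y_{n,i}})\subset B_X(y_{n,i},r_{y_{n,i}})$.

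For star-finiteness, the key observation (via Observation~\ref{O: ball reduction} applied with $Z=Y$) is that two balls from $\mathcal{B}$ are disjoint iff their intersections with $Y$ are. By construction,
\[
B_Y(y_{n,i},r_{y_{n,i}})\subset\mathrm{int}_Y K_{n+1}\setminus K_{n-2}.
\]
If $B\in\mathcal{B}_n$ meets $B'\in\mathcal{B}_m$ then $K_{\min(n,m)+1}\setminus K_{\max(n,m)-2}\neq\emptyset$, forcing $|n-m|\le2$. Since each $\mathcal{B}_k$ is finite, $B$ meets only finitely many other members of $\mathcal{B}$.

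Finally, for (c) we show that every $x\notin C$ is regular. If $x\in U$, pick $j$ with $x\in\mathrm{int}_Y K_j$ and take a small $X$-neighborhood $V=U_X(x,\delta)$; by Observation~\ref{O: ball reduction} it meets $B_X(y_{n,i},r_{y_{n,i}})$ iff the corresponding $Y$-balls meet, and for $n\ge j+3$ we have $B_Y(y_{n,i},r_{y_{n,i}})\subset Y\setminus K_{n-2}\subset Y\setminus K_{j+1}$, which misses $U_Y(x,\delta)$ for $\delta$ small. So only the finitely many balls in $\mathcal{B}_1\cup\cdots\cup\mathcal{B}_{j+2}$ can meet $V$. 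If instead $x\in X\setminus Y$, set $d:=\mathrm{dist}(x,Y)>0$; any ball $B_X(y,r)$ meeting $U_X(x,d/2)$ must satisfy $r>d/2$, and since we enforced $r\le1/n$ for balls in $\mathcal{B}_n$ this restricts $n$ to the finite range $n<2/d$, giving finitely many balls in total.

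\textbf{Main obstacle.} The delicate point is choosing the radii so that three tensions coexist: each ball must avoid $C$ (which prevents shrinking away from $C$ trivially), must be small enough not to reach the next-but-one compact layer (to ensure star-finiteness across layers), and must have a uniform upper bound $1/n$ on the $n$-th layer (which is what rules out accumulation of large balls at points $x\in X\setminus Y$, the nontrivial case of the regularity argument).
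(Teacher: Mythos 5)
Your proof is correct and reaches the conclusion via a genuinely different construction and a different star-finiteness argument than the paper's. The paper partitions $Y\setminus C$ into the countable bounded pieces $A_{h,k}=\{y\in Y\setminus C:\ \tfrac{1}{k+1}<\mathrm{dist}(y,C)\le\tfrac1k,\ h\le\|y\|<h+1\}$ (with the obvious modification for $k=0$), selects a finite $\tfrac{1}{2(k+1)}$-net in each, and uses balls of the \emph{uniform} radius $\tfrac{1}{2(k+1)}$ on that piece; it then proves (c) by estimating $\mathrm{dist}(x,C)$ along an accumulating sequence, and proves star-finiteness \emph{by contradiction}: if some $B_1$ met infinitely many members, intersection points in the compact slice $B_1\cap Y$ would accumulate to a singular point outside $C$, contradicting (c). You instead build a nested compact exhaustion $K_n$ of $Y\setminus C$, cover the compact annuli $A_n=K_n\setminus\mathrm{int}_YK_{n-1}$ by finitely many balls whose $Y$-traces are forced into $K_{n+1}\setminus K_{n-2}$, and obtain star-finiteness \emph{directly} from the combinatorial fact that layers $n$ and $m$ with $|n-m|\ge3$ cannot interact, each layer being finite. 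Your argument for (c) is likewise more direct, separately handling $x\in X\setminus Y$ via the uniform bound $r\le 1/n$ on layer $n$; the paper's estimate $\mathrm{dist}(x,C)\le\mathrm{dist}(x,B_n)+r(B_n)+1/k_n$ covers that case implicitly. The trade-off: your construction requires more bookkeeping on the radii, but buys a constructive (non-contradiction) proof of star-finiteness that makes the ``layer'' mechanism visible.

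One small technical point: you should choose $r_y$ \emph{strictly} less than the minimum (or, say, half of it). As written, with $r_y\le\mathrm{dist}_Y(y,K_{n-2})$ and $r_y\le\mathrm{dist}_Y(y,Y\setminus\mathrm{int}_YK_{n+1})$, the \emph{closed} ball $B_Y(y,r_y)$ can a priori touch $K_{n-2}$ or the boundary of $K_{n+1}$, which weakens the clean inclusion $B_Y(y,r_y)\subset\mathrm{int}_YK_{n+1}\setminus K_{n-2}$ on which the ``$|n-m|\le2$'' estimate rests (you would only get $|n-m|\le3$ in the degenerate case, which still gives star-finiteness, but is not what you stated). Replacing $\le$ by $<$ in the choice of $r_y$ removes the issue entirely.
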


\begin{proof}
Let us define
\begin{align*}
A_{h,k}&\textstyle :=\{y\in Y\setminus C: \frac1{k+1}<\mathrm{dist}(y,C)\leq\frac1k,h\leq\|y\|< h+1\}
\ \ (h\in\N_0, k\in\N),\\
A_{h,0}&:=\{y\in Y\setminus C:  {1}<\mathrm{dist}(y,C),h\leq\|y\|< h+1\}
\ \ (h\in\N_0),
\end{align*}
where for $C=\emptyset$ we put $\mathrm{dist}(y,C):=\infty$.
For each $h,k\in\N_0$, the bounded set $A_{h,k}\subset Y$ admits a finite $\frac1{2(k+1)}$-net
$E_{h,k}\subset A_{h,k}$. Consider the family
$$\textstyle
\B:=\left\{z+\frac1{2(k+1)}B_X:\;z\in E_{h,k},\;k,h\in\N_0\right\}
$$
which clearly satisfies (a). Since $Y\setminus C\subset\bigcup_{h,k\in\N_0}A_{h,k}\,$, 
the condition (b) easily follows by the choice of the sets $E_{h,k}$. 

Now let us show (c). Let $x\in X$ be a singular point of $\B$. Then $\B$ contains a sequence $\{B_n\}$
of pairwise distinct closed balls such that $\mathrm{dist}(x,B_n)\to0$. For each $n\in\N$ there are $h_n,k_n\in\N_0$
such that
$$\textstyle
c(B_n)\in E_{h_n,k_n}\quad\text{and}\quad
r(B_n)=\frac1{2(k_n+1)}\le\frac12\,.
$$
It is easy to see that $\{h_n\}$ is necessarily bounded and $\{k_n\}$ is unbounded.
So we can (and do) assume that $k_n\to\infty$. But then we obtain
\begin{align*}
\mathrm{dist}(x,C)&\le\|x-c(B_n)\|+\mathrm{dist}(c(B_n),C) \\
&\textstyle\le \mathrm{dist}(x,B_n) + r(B_n)+\frac1{k_n}\to0\,,
\end{align*}
and hence $x\in C$.

Finally, proceeding by contradiction, let us show that $\B$ is star-finite. So assume that $\B$ is not star-finite. 
There exists an infinite subfamily $\{B_n\}_{n\in\N}\subset \B$ such that $B_1\cap B_n\neq \emptyset$ for each $n\ge2$. 
By Observation~\ref{O: ball reduction}, $B_1\cap B_n\cap Y\neq \emptyset$, $n\ge2$.
Fix arbitrarily $y_n\in B_1\cap B_n\cap Y$. Since $B_1\cap Y$ is compact, there exists a subsequence $\{y_{n_k}\}$ that converges to some $y\in B_1$. But then $y$ is a singular point of $\B$ 
which, by (a), does not belong to $C$. This contradicts (c), and we are done.
\end{proof}

Finally let us prove the main result of the present section.

\begin{theorem}\label{T: star-finite countable dimension}
Let $X$ be a normed space such that $\dim X=\aleph_0$. Then $X$ has a star-finite covering $\B$ by closed balls.
\end{theorem}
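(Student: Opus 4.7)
The plan is to build $\B$ inductively as a union $\B = \bigcup_{n \geq 0} \B_n$ in which any two balls from different levels are disjoint. Fix a Hamel basis $\{e_k\}_{k\in\N}$ of $X$ and set $Y_n := \mathrm{span}\{e_1,\ldots,e_n\}$ for $n \geq 1$, so that $X = \bigcup_n Y_n$. Start with $\B_0 := \{B(0,1)\}$, and for $n \geq 1$, supposing $\B_0,\ldots,\B_{n-1}$ have already been constructed with $\bigcup_{k<n}\B_k$ covering $Y_{n-1}$ and the set $C_n := \bigl(\bigcup_{k<n}\B_k\bigr)\cap Y_n$ closed in $Y_n$, set $\B_n := \emptyset$ if $Y_n = C_n$, and otherwise invoke Lemma~\ref{L: totallybounded} with $Y = Y_n$ and $C = C_n$. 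The lemma provides a star-finite family $\B_n$ of closed balls of $X$ with centers in $Y_n$, each disjoint from $C_n$, whose union contains $Y_n \setminus C_n$ and whose singular points lie in $C_n$.

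Granting closedness of $C_n$ for every $n$, the verification of the remaining properties is straightforward. Coverage of $Y_n$ by $\bigcup_{k\leq n}\B_k$ follows from $Y_n = C_n \cup (Y_n \setminus C_n)$, using $C_n \subseteq \bigcup_{k<n}\B_k$ and $Y_n \setminus C_n \subseteq \bigcup\B_n$. Disjointness between different levels follows via Observation~\ref{O: ball reduction}: for $B' \in \B_n$ and $B \in \B_k$ with $k<n$, both centers lie in the convex set $Y_n$, and $B \cap Y_n \subseteq C_n$ while $B' \cap C_n = \emptyset$, so $B \cap B' \cap Y_n = \emptyset$ and hence $B \cap B' = \emptyset$. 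Consequently each $B \in \B_n$ meets only balls in $\B_n$, and only finitely many of those by the lemma, which gives star-finiteness of the whole family $\B$; the covering property is then clear from $X = \bigcup_n Y_n$.

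The main obstacle will be verifying that $C_{n+1}$ is closed in $Y_{n+1}$, since otherwise the induction cannot continue. Every $B \in \bigcup_{k \leq n}\B_k$ has center in $Y_k \subseteq Y_{n+1}$, so $B \cap Y_{n+1}$ is a closed ball of the finite-dimensional space $Y_{n+1}$, and $C_{n+1}$ is the union of these balls. To check closedness, take any sequence $(y_j) \subseteq C_{n+1}$ converging to some $y \in Y_{n+1}$ and balls $B_j \in \bigcup_{k\leq n}\B_k$ with $y_j \in B_j$. If some ball appears infinitely often then $y$ lies in it; otherwise, by the pigeonhole principle applied to the finitely many levels $k = 0,\ldots,n$, an infinite subsequence of $(B_j)$ is contained in a single $\B_k$, so $y$ is a singular point of $\B_k$ and hence $y \in C_k \subseteq \bigcup_{j<k}\B_j \subseteq C_{n+1}$. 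Thus $C_{n+1}$ is closed and the induction proceeds.
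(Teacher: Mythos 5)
Your proof is correct and takes essentially the same approach as the paper: an inductive construction using Lemma~\ref{L: totallybounded} on the nested finite-dimensional subspaces $Y_n$, disjointness between levels via Observation~\ref{O: ball reduction}, and closedness of the covered part of $Y_{n+1}$ obtained from the lemma's control on singular points. The only difference from the paper is cosmetic: the paper tracks closedness of the full union $C^n = \bigcup(\B_0\cup\dots\cup\B_n)$ in $X$, whereas you track closedness of $C_n = (\bigcup_{k<n}\B_k)\cap Y_n$ in $Y_n$; since the lemma only needs the latter, this changes nothing. (One small notational slip: the chain ``$y\in C_k\subseteq\bigcup_{j<k}\B_j\subseteq C_{n+1}$'' is literally false since $\bigcup_{j<k}\B_j\not\subset Y_{n+1}$ in general, but the intended conclusion is fine because $C_k\subset Y_k\subset Y_{n+1}$ gives $C_k\subset(\bigcup_{j<k}\B_j)\cap Y_{n+1}\subset C_{n+1}$.)
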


\begin{proof}
Let $\{e_n\}_{n\in\N}$ be a Hamel basis of $X$. We set 
$Y_0:=\{0\}$, and  $Y_n\coloneqq\mathrm{span}\{e_1,...,e_n\}$ for $n\in\N$. 
We will inductively define families $\B_n$ ($n\in\N_0$) of closed balls, satisfying for each $n\in\N_0$ the following conditions:
\begin{enumerate}
\item[($\mathrm P^1_n$)] $\B_n$ is star-finite;
\item[($\mathrm P^2_n$)] $Y_n\subset C^n:=\bigcup(\B_0\cup\dots\cup \B_n)$;
\item[($\mathrm P^3_n$)] $C^{n}$ is closed;
\item[($\mathrm P^4_n$)] $\bigcup\B_n$ is disjoint from $\bigcup(\bigcup_{k<n}\B_k)$.
\end{enumerate}

\smallskip\noindent
To start, put $\B_0:=\{B_X\}$ and notice that the conditions ($\mathrm P^1_0$)-($\mathrm P^4_0$) are trivially satisfied. 
Now, take $n\in\N$ and assume  we have already defined $\B_k$ for $k\le n-1$. 
Since $C:=C^{n-1}\cap Y_n$ is closed,
by Lemma~\ref{L: totallybounded} there exists a star-finite family $\B_n$ of closed balls of $X$,
all centered in $Y_n$,  such that $Y_n\cap C^{n-1}\cap\bigcup\B_n=C\cap\bigcup\B_n=\emptyset$, 
$Y_n\setminus C\subset \bigcup \B_n$, and all singular points of 
$\B_n$ belong to $C$. Since both $C^{n-1}$ and $\bigcup \B_n$ are unions of closed balls centered in $Y_n$, we can apply Observation~\ref{O: ball reduction} to obtain that
$C^{n-1}\cap\bigcup\B_n=\emptyset$, which shows ($\mathrm P^4_n$). Moreover,
$Y_n=C\cup(Y_n\setminus C)\subset C^{n-1}\cup\bigcup\B_n$, which is ($\mathrm P^2_n$). It remains to verify ($\mathrm P^3_n$). To this end, consider $x\in\overline{\bigcup\B_n}\,\setminus\bigcup\B_n\,$
and notice that $x$ is a singular point of $\B_n$, which implies that $x\in C\subset C^{n-1}$. Consequently,
$\overline{C^n}= C^{n-1}\cup\overline{\bigcup\B_n}\,\subset C^{n-1}\cup\bigcup\B_n=C^n$ which means that $C^n$ is closed.\\
Finally, let $\B=\bigcup_{n\in\N_0}\B_n$. By property ($\mathrm P^2_n$), we easily get that $\B$ is a covering. Since the sets $\bigcup \B_n$ ($n\in\N_0$) are pairwise disjoint, we immediately obtain  star-finiteness of $\B$. The proof is complete.
\end{proof}

\medskip

\section{Prohibitive conditions for coverings by closed balls}\label{section:prohibitive}

In the present section, we provide results on non-existence of star-finite or simple coverings of some Banach spaces. Main of these results are contained in Corollary~\ref{C: suffcond}, Corollary~\ref{C: no countable}, Theorem~\ref{T: c_0dirbuona}, and Corollary~\ref{C: no simple c0Gamma}.

\subsection{Rotundity and differentiability conditions} 

\begin{definition}\label{D: propertyI}
	Let $X$ be a normed space, and $\alpha$ a cardinal. 
	\begin{enumerate} 
		\item Given $\epsilon>0$, we say that a point $x\in S_X$ has property $(\mathcal I_{\alpha,\epsilon})$ if, whenever $\B$ is a family of pairwise disjoint closed balls of radius 1 not intersecting $B_X$ and such that $|\B|=\alpha$, we have 
		$$\textstyle \sup_{B\in\B}\mathrm{dist}(x,B)>\epsilon.$$
		
		\item We say that $X$ has property $(\mathcal I_{\alpha})$ if,
		for each $x\in S_X$ there exists $\epsilon>0$ such that $x$ has property $(\mathcal I_{\alpha,\epsilon})$.
		\item We say that $X$ has property $(U\mathcal I_{\alpha})$ if there exists $\epsilon>0$ such that each $x\in S_X$ has property $(\mathcal I_{\alpha,\epsilon})$.
		\item We denote
		$$
		\K(X,\alpha):=\sup\bigl\{\mathrm{sep}\,A:\, A\subset S_X,\ |A|=\alpha\bigr\},
		$$
		where\ \ $\mathrm{sep}\,A:=\inf\{\|a-b\|:\, a,b\in A,\ a\neq b\}$.
	\end{enumerate}
\end{definition}

\begin{remark}\label{R:I}
	Let $\alpha,\beta$ be cardinals such that $\alpha<\beta$, $x\in S_X$, and $\epsilon>0$.
	\begin{enumerate}[(a)]
		\item If $x$ has property $(\mathcal I_{\alpha,\epsilon})$ then $x$ has property $(\mathcal I_{\beta,\epsilon})$.
		\item  If $X$ has property $(U\mathcal I_{\alpha})$ then $X$ has property $(\mathcal I_{\alpha})$.
		\item It is clear that if $B$ is a closed ball in $X$ and $u\in \partial B$, then for each $r\in(0,r(B))$ there exists a closed ball $B'\subset B$ such that $r(B')=r$ and $u\in\partial B'$. This simple observation easily implies that:
		{the point $x$ has property $(\mathcal{I}_{\alpha,\epsilon})$ if and only if,
				whenever $\B$ is a family of pairwise disjoint closed balls not intersecting $B_X$ such that $|\B|=\alpha$
				and $\inf_{B\in\mathcal{B}}r(B)\ge\rho>0$, we have\ \  
						$\textstyle \sup_{B\in\B}\mathrm{dist}(x,B)>\rho\epsilon.$}
		\item Notice also that if $\alpha$ is an infinite cardinal then: $X$ has property $(U\mathcal{I}_\alpha)$ if and only if
		there exists $\epsilon>0$ such that if $\mathcal{B}$ is a disjoint family of closed balls of radius $1$ with $|\mathcal{B}|=\alpha$, and $x_B\in \partial B$ ($B\in\mathcal{B}$), then\ \  $\mathrm{diam}\{x_B\}_{B\in\mathcal{B}}>\epsilon$.
	    \item We clearly always have 
		$ \K(X,\alpha)\le 2$. Moreover, $\K(X,{\aleph_0})$ coincides with $\K(X)$, the {\em Kottman's (separation) constant} of a Banach space $X$; see \cite{Kott}. 		
	\end{enumerate}  
\end{remark}

\noindent The next lemma provides a characterization of property $(U\mathcal {I}_\alpha)$ in terms of $K(X,\alpha)$.

\begin{lemma}\label{L: costante di Kottman}
	Let $X$ be an infinite-dimensional normed space and let $\alpha$ be an infinite cardinal. Then $\K(X,\alpha)<2$ if and only if $X$ has $(U\mathcal {I}_\alpha)$.
\end{lemma}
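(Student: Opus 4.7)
The plan is to prove each implication by contrapositive, working throughout with the characterization of $(U\mathcal{I}_\alpha)$ given in Remark~\ref{R:I}(d): $X$ has $(U\mathcal{I}_\alpha)$ if and only if there is $\epsilon > 0$ such that whenever $\B$ is a disjoint family of closed balls of radius $1$ with $|\B| = \alpha$ and $x_B \in \partial B$ for every $B \in \B$, one has $\mathrm{diam}\{x_B\}_{B \in \B} > \epsilon$. In this formulation the auxiliary requirement ``not intersecting $B_X$'' disappears, which simplifies both constructions.

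For the direction $\K(X,\alpha) < 2 \Rightarrow (U\mathcal{I}_\alpha)$, I would negate the conclusion: for every $\epsilon \in (0,2)$ there exists a disjoint family $\B$ of closed balls of radius $1$ with $|\B| = \alpha$ and a selection $x_B \in \partial B$ satisfying $\|x_B - x_{B'}\| \leq \epsilon$ for all $B, B' \in \B$. The natural unit vectors $u_B := x_B - c(B) \in S_X$ then satisfy, via the triangle inequality together with the fact that two disjoint closed balls of radius $1$ have centres at distance strictly greater than $2$,
\[
\|u_B - u_{B'}\| \geq \|c(B) - c(B')\| - \|x_B - x_{B'}\| > 2 - \epsilon.
\]
In particular $B \mapsto u_B$ is injective, so $\{u_B\}_{B \in \B} \subset S_X$ witnesses $\K(X,\alpha) \geq 2 - \epsilon$. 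Letting $\epsilon \to 0^+$ yields $\K(X,\alpha) \geq 2$, contradicting the hypothesis.

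For the converse, I would assume $\K(X,\alpha) = 2$ and, given any $\delta > 0$, construct a configuration defeating the Remark~\ref{R:I}(d) criterion at level $\delta$. Pick $\eta \in (0, 1)$ so small that $2\eta/(2 - \eta) \leq \delta$, choose $A \subset S_X$ with $|A| = \alpha$ and $\mathrm{sep}\,A > 2 - \eta$, set $t := 2/(2 - \eta)$, and define $B_a := B(ta, 1)$ for $a \in A$. These balls are pairwise disjoint, since $\|ta - tb\| = t\|a - b\| > t(2 - \eta) = 2$, and the boundary points $x_a := (t - 1)a \in \partial B_a$ satisfy
\[
\|x_a - x_b\| = (t - 1)\|a - b\| \leq 2(t - 1) = \frac{2\eta}{2 - \eta} \leq \delta.
\]
Arbitrariness of $\delta$ together with Remark~\ref{R:I}(d) then gives that $X$ does not have $(U\mathcal{I}_\alpha)$.

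The only quantitative point meriting attention is the choice $t = 2/(2-\eta)$ in the second construction: it is the smallest scaling ensuring strict disjointness of the balls $B(ta, 1)$ when $A$ has separation just above $2 - \eta$, and its simultaneous closeness to $1$ (of order $\eta$) is precisely what drives the diameter of $\{x_a\}$ to $0$. Both directions ultimately rest on the same dictionary between pairs (centre, boundary point) of a unit-radius ball and unit vectors in $X$, implemented by $x_B \mapsto x_B - c(B)$.
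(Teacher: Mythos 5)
Your proposal is correct, and in one direction it takes a genuinely cleaner route than the paper. For the implication ``$X$ fails $(U\mathcal I_\alpha)\ \Rightarrow\ \K(X,\alpha)=2$'' (your first direction), the paper translates so the nearly-coincident boundary points lie in $\epsilon B_X$, discards the at most one ball containing the origin, and then shows the \emph{normalized centres} $c_\gamma/\|c_\gamma\|$ form a $(2-2\epsilon)$-separated set, which requires the intermediate estimates $1<\|c_\gamma\|\le 1+\epsilon$. You instead pass directly to the unit vectors $u_B:=x_B-c(B)\in S_X$; since disjoint radius-one balls have centres at distance $>2$, the reverse triangle inequality immediately gives $\|u_B-u_{B'}\|>2-\epsilon$, with no translation, no exclusion of a ball, and a slightly better constant ($2-\epsilon$ versus the paper's $2-2\epsilon$). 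The injectivity of $B\mapsto u_B$ is automatic from the positive separation, so the set has cardinality $\alpha$ as needed. For the converse direction the two proofs are essentially the same construction parametrized differently: the paper places balls of radius $1-\epsilon/2$ at the points of $A$ and then rescales by $r=(1-\epsilon/2)^{-1}$, whereas you directly place radius-one balls at $ta$ with $t=2/(2-\eta)$; both yield boundary points of diameter $2\eta/(2-\eta)\to 0$. Both arguments correctly rely only on the reformulation in Remark~\ref{R:I}(d), as the paper does.
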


\begin{proof}
	First assume that $\K(X,\alpha)=2$, and fix an arbitrary $\epsilon\in(0,2)$. There exists a set $A\subset S_X$
	with $\mathrm{sep}\, A>2-\epsilon$ and $|A|=\alpha$. Then the balls $B_a:=B(a,1-\epsilon/2)$, $a\in A$, are pairwise disjoint, and moreover $y_a:=(\epsilon/2)a\in B_a$. Clearly, $\mathrm{diam}\{y_a\}_{a\in A}\le\epsilon$. By multiplying everything by $r:=(1-\epsilon/2)^{-1}$
	we obtain pairwise disjoint balls $r B_a$ ($a\in A$) of radius $1$, and points $z_a:= r y_a\in r B_a$ such that
	$\mathrm{diam}\{z_a\}_{a\in A}\le r\epsilon=2\epsilon/(2-\epsilon)$. Since $\epsilon$ can be arbitrarily small, $X$ fails $(U\mathcal I_\alpha)$ by Remark~\ref{R:I}(d).
	
	Now, assume that $X$ fails $(U\mathcal I_\alpha)$, and fix an arbitrary $\epsilon\in(0,1)$.
	By Remark~\ref{R:I}(d), there exist pairwise disjoint balls $B_\gamma:=B(c_\gamma,1)$ ($\gamma<\alpha$) and points $y_\gamma\in B_\gamma$ with
	$\mathrm{diam}\{y_\gamma\}_{\gamma<\alpha}\le\epsilon/2$. By translation, we can (and do) assume that
	$\{y_\gamma\}_{\gamma<\alpha}\subset \epsilon B_X$. Since the origin belongs to at most one of the balls $B_\gamma$,
	by excluding such a ball we can (and do) assume that $0\notin B_\gamma$ ($\gamma<\alpha$).
	Then $1<\|c_\gamma\|\le\|c_\gamma-y_\gamma\|+\|y_\gamma\|\le 1+\epsilon$ for each $\gamma<\alpha$, and
	$\|c_\gamma-c_\beta\|>2$ whenever $\gamma \ne \beta$. Consider the set $A$ of all the points $x_\gamma:=c_\gamma/\|c_\gamma\|$ ($\gamma<\alpha$).
	Then $\|x_\gamma-c_\gamma\|=\|c_\gamma\|-1\le\epsilon$ and hence
	for $\gamma\ne \beta$ we have $\norm{x_\gamma-x_\beta}\ge\norm{c_\gamma-c_\beta}-\norm{x_\gamma-c_\gamma}-\norm{x_\beta-c_\beta}
	>2-2\epsilon$. Since $\mathrm{sep}\, A\ge 2-2\epsilon$ and $\epsilon$ can be arbitrarily small, we conclude that $\K(X,\alpha)=2$.
\end{proof}

Next theorem shows that Banach spaces satisfying condition $(\mathcal I_{{\aleph_0}})$ do not admit any star-finite covering by closed balls. In order to prove this result we need a simple lemma.

\begin{lemma}\label{L: uncountable star.finite} Let $X$ be a normed space, and $Y$ its separable subspace. Suppose that $\B$ is a star-finite covering of $X$ by closed balls such that uncountably many elements of $\B$ intersect $Y$. Then $X$ fails property $(\mathcal I_{\aleph_1})$.
\end{lemma}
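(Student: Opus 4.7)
The plan is to construct, from the hypothesized covering, witnesses to the failure of $(\mathcal I_{\aleph_1,\epsilon})$ at an arbitrary fixed $x\in S_X$ for every $\epsilon>0$; this yields the failure of $(\mathcal I_{\aleph_1})$. The construction proceeds in four stages: (i) extract an uncountable pairwise-disjoint subfamily of $\B$ meeting $Y$, (ii) normalize the radii to a dyadic range $[\rho,2\rho]$, (iii) use separability of $Y$ to find a condensation point of the contact points, and (iv) rescale and translate to produce the desired families.

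For stage (i), the set $\B_Y:=\{B\in\B:B\cap Y\ne\emptyset\}$ is uncountable and, by star-finiteness, the intersection graph on $\B_Y$ has every vertex of finite degree. A greedy transfinite recursion of length $\omega_1$ then selects a pairwise-disjoint subfamily of cardinality $\aleph_1$: at each stage $\alpha<\omega_1$ only countably many vertices are forbidden, while $\B_Y$ is uncountable. Stage (ii) is a standard dyadic pigeonhole on the radii. For stage (iii), I pick $y_B\in B\cap Y$ for each $B$ in the resulting family $\B'$; the uncountable subset $\{y_B\}$ of the separable metric space $Y$ has a condensation point $y_0\in Y$, i.e.\ every neighborhood of $y_0$ contains uncountably many $y_B$.

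The core is stage (iv). Fix any $x\in S_X$ and any $\epsilon>0$, and set $\delta:=\rho\epsilon/4$. The subfamily $\B'':=\{B\in\B':\|y_B-y_0\|<\delta\}$ is uncountable. Delete the (at most one) ball containing $y_0$; for the remaining $B$, the triangle inequality yields
\[
0 \;<\; \|c(B)-y_0\|-r(B) \;<\; \delta.
\]
A further dyadic pigeonhole produces an uncountable $\B'''\subseteq\B''$ on which $\|c(B)-y_0\|-r(B)\ge a$ for a fixed $a>0$. Choose $N>\max\{1/a,\;4/(\rho\epsilon)\}$ and consider
\[
\mathcal C \;:=\; \{\,N(B-y_0):B\in\B'''\},
\]
an uncountable family of pairwise-disjoint closed balls of radii $\ge N\rho$. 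Each member is disjoint from $B_X$: its center $N(c(B)-y_0)$ has norm $\ge N(r(B)+a)>1+Nr(B)$ since $Na>1$. And since $N(y_B-y_0)\in N(B-y_0)$,
\[
\dist(x,\, N(B-y_0)) \;\le\; 1 + N\|y_B-y_0\| \;<\; 1 + N\delta \;<\; (N\rho)\epsilon,
\]
the last inequality using $\delta=\rho\epsilon/4$ and $N>4/(\rho\epsilon)$. Any $\aleph_1$-subfamily of $\mathcal C$ then witnesses the failure of $(\mathcal I_{\aleph_1,\epsilon})$ at $x$ via Remark~\ref{R:I}(c) with $\rho':=N\rho$.

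The hardest point will be stage (iv): a single $x$ has to work for every $\epsilon>0$ simultaneously. This is made possible by the double use of pigeonhole. Condensation lets the cluster size $\delta$ shrink with $\epsilon$, while the further pigeonhole producing the uniform radial gap $\|c(B)-y_0\|-r(B)\ge a$ is precisely what allows a large $N$ to push every translated ball strictly off $B_X$ without destroying the distance estimate to $x$.
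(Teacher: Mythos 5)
Your proof is correct, and it takes a genuinely different route from the paper's. The paper's argument uses the hypothesis that $\B$ is a covering in an essential way: after locating a condensation point $\overline y\in Y$ of the contact points $\{y_C\}$, it selects a member $\widetilde B\in\B$ of the covering with $\overline y\in\widetilde B$, shows via star-finiteness that $\overline y$ must lie on $\partial\widetilde B$ (since all but finitely many of the balls $C$ are disjoint from $\widetilde B$ yet have $y_C\to\overline y$), and then translates/rescales so that $\widetilde B$ becomes $B_X$ and $\overline y$ becomes the distinguished point $x$; disjointness from $B_X$ is automatic because the remaining $C$'s are already disjoint from $\widetilde B$. You never use the covering hypothesis: instead you fix an \emph{arbitrary} $x\in S_X$, and the second dyadic pigeonhole on the gap $\|c(B)-y_0\|-r(B)\in(0,\delta)$, together with the large scaling factor $N>1/a$, is exactly the device that replaces the paper's use of a covering member $\widetilde B$ to guarantee disjointness from $B_X$. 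The upshot is that your argument establishes a formally stronger conclusion --- it applies to any star-finite family of closed balls with uncountably many members meeting a separable subspace, not just to coverings, and it shows that \emph{every} point of $S_X$ fails $(\mathcal I_{\aleph_1,\epsilon})$ for all $\epsilon>0$ --- at the modest cost of one extra pigeonhole and the scaling step. (A couple of implicit steps are fine but worth flagging as used: the contact points $y_B$ are pairwise distinct because the balls in $\B'$ are disjoint, so $\{y_B\}$ is genuinely uncountable in $Y$; and your transfinite recursion is the explicit version of the Zorn argument the paper uses to extract a maximal disjoint subfamily.)
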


\begin{proof}
	Let us consider the uncountable family $\B':=\{B\in\B: B\cap Y\neq\emptyset\}$ and, 
	for each $C\in\B'$, let us consider $y_C\in Y\cap C$. 
	By Zorn's lemma, there exists a maximal simple subfamily $\mathcal{C}'$ of $\mathcal{B}'$. Notice that, since the family  $\mathcal{B}'$ is uncountable and star-finite, $\mathcal{C}'$ must be uncountable.   
	If we denote $\mathcal C'_m:=\{C\in\mathcal C' : r(C)\geq \frac1m\}$
	($m\in\N$), it is clear that there exists $n\in\N$ such that $\mathcal C'_n$ is uncountable.  
	Since $Y$ is separable, there exists a condensation point $\overline y\in Y$ for the set 
	$U:=\{y_C : C\in\mathcal C'_n\}$. Moreover, there exists $\widetilde B\in \B'$ such that $\overline y\in \widetilde B$; since $\B'$ is star-finite, we have $\overline y\in\partial \widetilde B$, moreover, only finitely many elements of $\mathcal C'_n$ intersect $\widetilde B$. It easily follows that $X$ fails property $(\mathcal I_{\aleph_1})$.
\end{proof}

\begin{theorem}\label{T: trebolle}
	Let $X$ be an infinite-dimensional Banach space satisfying property $(\mathcal I_{{\aleph_0}})$. Then $X$ does not admit star-finite coverings by closed balls.	
\end{theorem}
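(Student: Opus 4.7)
The plan is a proof by contradiction that combines minimality (to produce a singular boundary point with a nontrivial accumulation of other balls) with a rescaling trick that converts the accumulation into a configuration forbidden by property $(\mathcal{I}_{\aleph_0})$.

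Assume, toward a contradiction, that $\B$ is a star-finite covering of $X$ by closed balls. As observed in Section~\ref{section:preliminaries}, every point-finite covering contains a minimal subcovering, so I may take $\B$ itself to be minimal. Fix $B_0 = B(c_0, r_0) \in \B$. Corollary~\ref{C: one ball} provides a nonempty relatively open $U \subset \partial B_0$ disjoint from every other member of $\B$; pick $u \in U$. Then $\B(u) = \{B_0\}$ while $u \in \partial B_0 \setminus \interior B_0$, so Observation~\ref{O: basic}(b) makes $u$ a singular point of $\B$. Singularity yields pairwise distinct $B_n \in \B$ with $\dist(u, B_n) \to 0$; by the privateness of $U$ none contains $u$, and if a single $B^* \in \B$ equalled $B_n$ for infinitely many $n$, closedness of $B^*$ would force $u \in B^*$, a contradiction. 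Iterating star-finiteness, I would extract a pairwise disjoint subsequence, all disjoint from $B_0$.

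The key step is to arrange the radii so that the scale-invariant form of $(\mathcal{I}_{\aleph_0})$ given in Remark~\ref{R:I}(c) can be applied. If infinitely many $B_n$ satisfy $r(B_n) \ge r_0$, dilation by $1/r_0$ about $c_0$ sends $B_0$ to $B_X$ and $u$ to some $\bar u \in S_X$, leaving the $B_n$ as pairwise disjoint balls of radius $\ge 1$ disjoint from $B_X$ and clustering at $\bar u$. Otherwise I may assume $r(B_n) \le r_0$ for every $n$, and the dyadic pigeonhole applied to the intervals $[2^{-k-1}, 2^{-k}] \subset (0, r_0]$ produces an infinite subfamily with $r(B_n) \in [2^{-k-1}, 2^{-k}]$ for some fixed $k$. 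Dilating by $2^k$ puts the $B_n$'s radii in $[1/2, 1]$ and turns $B_0$ into a ball $B_0'$ of radius $r_0' \ge 1$ whose boundary still contains the rescaled accumulation point $u'$. I then introduce the inner tangent unit ball $B^* := B(u' + \hat v, 1)$ with $\hat v := (c(B_0') - u')/r_0'$; a direct calculation yields $\|c(B^*) - c(B_0')\| = r_0' - 1$, so $B^* \subseteq B_0'$ and $u' \in \partial B^*$. Since the $B_n$ avoid $B_0'$, they avoid $B^*$ as well; translating by $-c(B^*)$ identifies $B^*$ with $B_X$, sends $u'$ to some $\bar u \in S_X$, and leaves the $B_n$ as pairwise disjoint closed balls of radius $\ge 1/2$, disjoint from $B_X$, still clustering at $\bar u$.

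In either case I end up with $\bar u \in S_X$ and an infinite pairwise disjoint family of closed balls of radius at least some $\rho > 0$, disjoint from $B_X$, whose distances to $\bar u$ tend to zero. Passing to a cofinal subfamily whose distances to $\bar u$ all lie below $\rho \epsilon$ contradicts property $(\mathcal{I}_{\aleph_0, \epsilon})$ at $\bar u$ via Remark~\ref{R:I}(c), and hence contradicts the hypothesis $(\mathcal{I}_{\aleph_0})$. The main obstacle I anticipate is precisely this control of the radii in the cluster: a naive extraction may yield $r(B_n) \to 0$, in which case a straightforward rescaling of $B_0$ to $B_X$ would shrink the $B_n$ to vanishing radius and Remark~\ref{R:I}(c) would provide no information. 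The dyadic partition combined with the inner tangent unit-ball construction is the device I rely on to convert every possible type of cluster into a valid test configuration for $(\mathcal{I}_{\aleph_0})$.
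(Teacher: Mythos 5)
Your setup is correct and partly more streamlined than the paper's: you pass directly to a minimal covering, use Corollary~\ref{C: one ball} to obtain a ``private'' relatively open set $U\subset\partial B_0$, and then observe via Observation~\ref{O: basic}(b) that every $u\in U$ is automatically singular (since $\B(u)=\{B_0\}$ but $u\notin\interior B_0$). Extracting a pairwise disjoint sequence $\{B_n\}$ of members, disjoint from $B_0$ and not containing $u$, with $\dist(u,B_n)\to0$ is fine, and your inner tangent unit-ball construction in the rescaled picture is correct. You also correctly identify that the whole argument hinges on controlling the radii $r(B_n)$.

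However, there is a genuine gap exactly where you flag it: the dyadic pigeonhole step fails in the case $\liminf r(B_n)=0$. If, say, $r(B_n)=2^{-n}$, then each interval $[2^{-k-1},2^{-k}]$ contains at most one $r(B_n)$, so no fixed dyadic interval yields an infinite subfamily, and neither the first case (infinitely many radii $\ge r_0$) nor the second applies. Your final remark acknowledges the obstacle, but the claim that ``the dyadic partition combined with the inner tangent unit-ball construction'' converts every cluster into a valid test configuration is simply not true when the radii vanish. In that regime property $(\mathcal I_{\aleph_0})$ gives no information, because Remark~\ref{R:I}(c) requires a uniform lower bound $\rho>0$ on the radii.

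This vanishing-radii case is precisely where the paper does real work, and the machinery it uses is absent from your proposal. The paper first reduces to a separable subspace $Y$ so that the induced covering $\B'$ is countable (Lemma~\ref{L: uncountable star.finite}, Observation~\ref{O: card upper bound}), then runs a Baire category argument on the $G_\delta$ set $H=\{x\in D:\;|\B'(x)|=1\}$ to find a specific point $x_0\in\partial B_0$ and $\epsilon>0$ with $U(x_0,\epsilon)\cap H\subset\partial B_0$. With this extra structure in hand, when $r(B_n)\to0$ the argument picks private boundary points $z_n\in H\cap C_n$ (again by Corollary~\ref{C: one ball}), shows $z_n\to x_0$ forces $z_n\in\partial B_0$ eventually, hence $C_n\cap B_0\ne\emptyset$, contradicting star-finiteness rather than $(\mathcal I_{\aleph_0})$. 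Your argument would need an analogue of this dichotomy; as written, the accumulation point $u$ you choose has no special property that lets you rule out small balls, so the case $r(B_n)\to0$ is unaddressed and the proof is incomplete.
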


\begin{proof}
	Proceeding by contradiction, assume that such a covering $\B$ exists. Let us consider $Y$, a separable infinite-dimensional  subspace of $X$.
	By Lemma~\ref{L: uncountable star.finite} and since $X$ has property $(\mathcal I_{{\aleph_0}})$ (and hence property $(\mathcal I_{\aleph_1})$), the family $\B':=\{B\cap Y : B\in\B, B\cap Y\neq\emptyset\}$ must be
	countable. Moreover, we can (and do) assume that $\mathcal B'$ is a minimal covering of $Y$, and denote 
	$$\textstyle
	D:=\bigcup_{B\in\B'} \partial B\,,\quad
	H:=\{x\in D: |\mathcal{B'}(x)|=1\}.
	$$
	Observe that since $Y$ is infinite-dimensional and $\B'$ is minimal, $H$ is nonempty by Corollary~\ref{C: one ball}.
	By Observation~\ref{O: basic}(d), $H=\bigcup_{B\in\B'}(\partial B\cap H)$ is a Baire space.
	Therefore there exists $B_0\in\B'$ such that $\partial B_0\cap H$ is not nowhere dense in $H$.
	Using the fact that $\partial B_0\cap H$ is a relatively open set in $\partial B_0$ 
	(see Corollary~\ref{C: one ball}), it easily follows that there exist $x_0\in\partial B_0$ and $\epsilon>0$ so that
	\begin{equation}\label{E: UH1}
	U(x_0,\epsilon)\cap H\subset \partial B_0\cap H.
	\end{equation}
	Clearly, $x_0$ is a singular point for $\B'$. Since $\B'$ is star-finite, there exists a sequence $\{y_n\}\subset Y$ such that $y_n\to x_0$, $y_n\in C_n\in\B'$ and the sets $C_n$ ($n\in\N$) are pairwise distinct. Now, for each $n\in\N$, there exists $B_n\in\B$ such that $C_n=B_n\cap Y$. Let  $ r(B_n)$ be the radii of the balls $B_n$ ($n\in\N$) and consider the following two cases.
	\begin{enumerate}
		\item $ r(B_n)\not\to0$. Let $D_0\in \B$ be such that $B_0=D_0\cap Y$. By considering a suitable subsequence we can suppose without any loss of generality that: (a) there exists $\alpha>0$ such that $ r(B_n)>\alpha$, whenever $n\in\N$, and such that $ r(D_0)>\alpha$; (b) the sets $B_n$ ($n\in\N$) and $D_0$ are pairwise disjoint. 
		\item $ r(B_n)\to0$. Since $Y$ is infinite-dimensional and $\B'$ is minimal,  by Corollary~\ref{C: one ball}, for each $n\in\N$ there exists $z_n\in H\cap C_n$. In particular, $z_n\to x_0$ and hence, since $(x_0+\epsilon B_Y)\cap H\subset\partial B_0$, we have that eventually $z_n\in\partial B_0$. Hence, eventually $C_n\cap B_0\neq\emptyset$. 
	\end{enumerate}	  
	We have a contradiction, in the first case  since $X$ has property $(\mathcal I_{\aleph_0})$,  and in the latter case since $\B'$ is star-finite. This concludes the proof.
\end{proof}

The rest of the present subsection is devoted to finding sufficient conditions for a Banach space to satisfy property $(\mathcal I_{\aleph_0})$. For this purpose let us recall the following definition from \cite{DEVETIL}.

\begin{definition}[see {\cite[Definition~4.6]{DEVETIL}}]
	We shall say that $x\in S_X$ is a {\em locally non-D2} (or {\em LND2}) {\em point} of $B_X$ if there exists $\delta>0$ such that
	\begin{equation*}\label{nd2}
	\textstyle
	\text{diam}\bigl\{y\in S_X:\;\|\frac{x+y}2\|\geq1-\delta\bigr\}<2\,.
	\end{equation*}
\end{definition}
\medskip  

\noindent The following lemma immediately follows by \cite[Lemma~4.5]{DEVETIL}.

\begin{lemma}\label{L: trebollelur} 
	Let $X$ be a normed space, $\epsilon\ge0$, and
	$B_0,B_1,B_2\subset X$ three closed balls of radius one whose interiors are pairwise disjoint.
	Consider three points $y_i\in\partial B_i$, $i=0,1,2$,
	and denote $x_0=y_0-d_0$ where $d_0$ is the center of $B_0$.
	If $\mathrm{diam}\{y_0,y_1,y_2\}\leq\epsilon\,$ then
	\begin{equation}\label{E: 3b}
	\textstyle
	\mathrm{diam}\bigl\{y\in S_X:\|x_0+y\|\geq2-\epsilon\bigr\}\geq2-2\epsilon\,.
	\end{equation}
\end{lemma}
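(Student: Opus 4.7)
The plan is to produce two explicit points in the set $\{y\in S_X:\norm{x_0+y}\geq 2-\epsilon\}$ whose mutual distance is at least $2-2\epsilon$, which immediately gives \eqref{E: 3b}.

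For $i=1,2$, I would take $z_i:=d_i-y_i$, where $d_i$ denotes the center of $B_i$. Since $y_i\in\partial B_i$ and $r(B_i)=1$, each $z_i$ lies in $S_X$. The only geometric input required is the standard fact that any two balls of radius~$1$ with disjoint interiors must have centers at distance at least~$2$; hence $\norm{d_i-d_j}\geq 2$ for all pairs $i\neq j$ in $\{0,1,2\}$.

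The core manipulation is the identity
\[
x_0+z_i\;=\;(y_0-d_0)+(d_i-y_i)\;=\;(d_i-d_0)-(y_i-y_0),
\]
which, combined with the triangle inequality and the hypothesis $\diam\{y_0,y_1,y_2\}\leq\epsilon$, yields
\[
\norm{x_0+z_i}\;\geq\;\norm{d_i-d_0}-\norm{y_i-y_0}\;\geq\;2-\epsilon \qquad(i=1,2).
\]
In particular $z_1,z_2$ both lie in the set on the left-hand side of \eqref{E: 3b}. A completely parallel computation,
\[
\norm{z_1-z_2}\;=\;\norm{(d_1-d_2)-(y_1-y_2)}\;\geq\;\norm{d_1-d_2}-\norm{y_1-y_2}\;\geq\;2-\epsilon,
\]
finishes the argument, since $2-\epsilon\geq 2-2\epsilon$.

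I do not foresee any genuine obstacle here: the entire proof hinges on guessing the right candidates $z_i$, namely the antipodes of $y_i$ in $B_i$ viewed as vectors based at $d_i$, so that adding $x_0=y_0-d_0$ reorganizes each expression as ``a long center-to-center vector, minus a small clustering error''. The mild slack between the $2-\epsilon$ bound one actually obtains and the $2-2\epsilon$ of the statement presumably exists only to match the LND2 framework of \cite[Lemma~4.5]{DEVETIL}, from which the present lemma is said to follow at once.
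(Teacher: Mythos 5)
Your proof is correct, and the argument is complete and elementary. For the record, the paper does not supply a proof of this lemma at all: it states that it ``immediately follows by \cite[Lemma~4.5]{DEVETIL}'' and moves on, so the genuine content is outsourced to that reference. Your candidates $z_i=d_i-y_i$ (the antipodes of $y_i$ in $B_i$) together with the disjoint-interiors estimate $\|d_i-d_j\|\ge 2$ and the triangle inequality give precisely what is needed, and in fact you establish the sharper bound $2-\epsilon$ on both $\|x_0+z_i\|$ and $\|z_1-z_2\|$, of which $2-2\epsilon$ is a weakening chosen by the authors for compatibility with the LND2 framework. The only very mild caveat is terminological: the diameter in \eqref{E: 3b} is taken over a set you must first observe is nonempty (it contains $z_1,z_2$), which you implicitly do; once $\epsilon<2$ your estimate also shows $z_1\ne z_2$, and for $\epsilon\ge1$ the right-hand side of \eqref{E: 3b} is nonpositive so the inequality is automatic. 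In short, your proof is a correct self-contained replacement for the paper's citation and is almost certainly the argument behind the cited lemma.
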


\medskip

\noindent For $f\in S_{X^*}$ and $\alpha\in[0,1)$, we consider the closed convex cone
$$
\C(\alpha, f)=\{x\in X: f(x)\ge\alpha\|x\|\}.
$$
The following observation is an analogue of \cite[Observation 2.1]{DEVETIL} for uniformly Fr\'echet smooth norms.

\begin{observation}\label{O: Fr}
	Suppose that $X$ is a Banach space with uniformly Fr\'echet smooth norm. Then
	for each $\alpha\in(0,1)$ there exists $\epsilon>0$ such that for each $x\in S_X$ there exists  $f_x\in S_{X^*}$ with the following property: 
	\begin{equation}\label{E: fr}
	[x-\C(\alpha,f_x)]\cap[x+\epsilon B_X]\subset B_X\,.
	\end{equation}
	\rm
\end{observation}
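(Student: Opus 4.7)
The plan is to take $f_x\in S_{X^*}$ to be the Fr\'echet derivative of the norm at $x\in S_X$, i.e.\ the unique norming functional $f_x(x)=1$ (uniqueness comes from smoothness of the norm). The engine of the proof is the standard quantitative reformulation of uniform Fr\'echet smoothness: for every $\eta>0$ there exists $\delta=\delta(\eta)>0$ such that
\begin{equation*}
\norm{x+h}\le 1+f_x(h)+\eta\norm{h}\qquad (x\in S_X,\ \norm{h}\le\delta).
\end{equation*}
This is just the usual linear upper estimate describing Fr\'echet differentiability of the norm at $x$, made uniform in $x\in S_X$; it follows at once from the definition of uniform Fr\'echet smoothness together with the convexity of the norm.

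Given $\alpha\in(0,1)$, I would apply the above with $\eta:=\alpha/2$ and set $\epsilon:=\delta(\alpha/2)$. For any $y\in[x-\C(\alpha,f_x)]\cap[x+\epsilon B_X]$, write $h:=y-x$; then $\norm{h}\le\epsilon$ and $-h\in\C(\alpha,f_x)$, which by definition of the cone means $f_x(-h)\ge\alpha\norm{-h}$, i.e.\ $f_x(h)\le-\alpha\norm{h}$. Plugging this into the displayed inequality yields
\begin{equation*}
\norm{y}=\norm{x+h}\le 1+f_x(h)+(\alpha/2)\norm{h}\le 1-(\alpha/2)\norm{h}\le 1,
\end{equation*}
so $y\in B_X$, which is the required inclusion.

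No real obstacle arises: the argument is a direct Fr\'echet analogue of the G\^ateaux-type computation in \cite[Observation~2.1]{DEVETIL}, the role of pointwise smoothness being replaced by the uniform-in-$x$ modulus. The only point that must be checked carefully is that the $\delta$ provided by uniform Fr\'echet smoothness may indeed be chosen independently of $x\in S_X$, so that a single $\epsilon$ works for all $x$; this is built into the definition of \emph{uniform} Fr\'echet smoothness.
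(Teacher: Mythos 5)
Your proposal is correct and is essentially the same argument as the paper's: both take $f_x$ to be the Fréchet derivative of the norm at $x$, invoke the uniform first-order Taylor estimate $\|x+h\|\le 1+f_x(h)+\eta\|h\|$ for $\|h\|\le\delta(\eta)$, and then use $h\in-\C(\alpha,f_x)$ to absorb the error term. The only cosmetic difference is that you pick $\eta=\alpha/2$ for a strict margin, while the paper uses $\eta=\alpha$ directly, which already suffices since $f_x(h)\le-\alpha\|h\|$.
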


\begin{proof} 
	For each $x\in S_X$, let $f_x\in S_{X^*}$ be the Fr\'echet derivative of $\|\cdot\|$ at $x$. Since the norm of $X$ is uniformly Fr\'echet smooth, for each $\alpha\in(0,1)$ there exists $\epsilon>0$ such that, for each $x\in S_X$, we have
	$\bigl|\|x+h\|-1-f_x(h)\bigr|\leq\alpha\|h\|$, whenever $h\in\epsilon B_X$.
	Thus, for $h\in [-\C(\alpha,f_x)]\cap\epsilon B_X$, we obtain
	$\|x+h\|\leq 1+f_x(h)+\alpha\|h\|\le 1$,
	and hence $x+h\in B_X$. This completes the proof.
\end{proof}

\begin{definition}[{see \cite[Definition~2.2]{DEVETIL}}]\label{D: cs} Let  $x\in S_X$ and $\epsilon>0$.
	We say that $x$ is an $\epsilon$-\emph\cs\  point of $B_X$
	if there exists $f_x\in \SX$ such that
	$$\textstyle[x-\C(\frac17,f_x)]\cap[x+\epsilon B_X]\subset B_X\,,$$
	that is, \eqref{E: fr} holds for $\alpha=1/7$.
\end{definition}

Observe that, if the norm of $X$ is uniformly Fr\'echet smooth, then, by Observation~\ref{O: Fr}, there exists $\epsilon>0$ such that each $x\in S_X$ is an $\epsilon$-\cs\ point of $B_X$. 

\begin{proposition}\label{P: suffcond}
	Let $X$ be a Banach space and $x\in S_X$. Let us consider the following conditions:
	\begin{enumerate}[(i)]
		\item $X$ is uniformly Fr\'echet smooth;
		\item there exists $\epsilon>0$ such that the set of all $\epsilon$-cone smooth points of $B_X$ is dense in $S_X$;
		\item $\K(X)\equiv \K(X,\aleph_0)$, the Kottman's constant of $X$, satisfies $\K(X)<2$;
		\item $x$ is an LUR point;
		\item $x$ is an LND2 point;
		\item $x$ is a Fr\'echet smooth and strongly exposed point of $B_X$;
		\item $x$ is a Fr\'echet smooth point and the unique norm-one functional $f_x\in X^*$ that supports $B_X$ at $x$ determines a slice $\Sigma$ of $B_X$ such that
		$\mathrm{diam}(\Sigma)<2$.
	\end{enumerate}
Then the following implications hold.
\begin{enumerate}[(a)]
\item If $(i)$ or $(ii)$ is satisfied then $X$ has property $(U\mathcal I_2)$.
\item If (iii) is satisfied then $X$ has property $(U\mathcal I_{\aleph_0})$.
\item If at least one of the conditions $(iv)$--$(vii)$ is satisfied then the point $x$ has property 
$(\mathcal I_{2,\epsilon})$ for some $\epsilon>0$.
\end{enumerate}
\end{proposition}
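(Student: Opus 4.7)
My plan is to handle the three implications separately. Part (b) is essentially immediate: condition (iii) is $\K(X)<2$, which by Remark~\ref{R:I}(e) equals $\K(X,\aleph_0)<2$, and Lemma~\ref{L: costante di Kottman} identifies this with $(U\mathcal{I}_{\aleph_0})$. For parts (a) and (c) I will set up a common scheme built around Lemma~\ref{L: trebollelur}: take $B_0=B_X$ (so $d_0=0$ and $x_0=x$), let $B_1,B_2$ be the hypothetical two disjoint unit balls outside $B_X$ at distance $\le\epsilon$ from $x$, choose $y_0=x$ and $y_i\in\partial B_i$ to nearly realize $\dist(x,B_i)$, and observe $\mathrm{diam}\{y_0,y_1,y_2\}\le 2\epsilon$. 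The lemma then forces $\mathrm{diam}\{y\in S_X:\|x+y\|\ge 2-2\epsilon\}\ge 2-4\epsilon$, and the contradiction in each case will come from a matching upper bound on this slice supplied by the relevant hypothesis.

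For (c), the reductions (iv) $\Rightarrow$ (v) and (vi) $\Rightarrow$ (vii) are routine: LUR at $x$ immediately yields an LND2 diameter bound, while at a Fréchet smooth $x$ any strongly exposing functional must coincide with $f_x$ by uniqueness of support, so a strongly exposed point has $f_x$-slices of diameter tending to $0$. For (v), the LND2 hypothesis gives $\mathrm{diam}\{y\in S_X:\|x+y\|\ge 2-2\delta\}\le d<2$; choosing $\epsilon<\min(\delta,(2-d)/4)$, the Lemma conclusion $\ge 2-4\epsilon$ exceeds $d$, a contradiction. For (vii), I will embed the slice into $\Sigma=\{y\in B_X:f_x(y)\ge 1-\epsilon_0\}$ via a Šmulyan step: if $g\in S_{X^*}$ norms $x+y$ then $g(x)\ge 1-2\epsilon$, so pointwise Fréchet smoothness at $x$ yields $\|g-f_x\|\le\eta(\epsilon)\to 0$, whence $f_x(y)\ge(1-2\epsilon)-\eta(\epsilon)$; for $\epsilon$ small this exceeds $1-\epsilon_0$ so $y\in\Sigma$, forcing slice diameter $\le\mathrm{diam}(\Sigma)<2$ against Lemma~\ref{L: trebollelur}'s $\ge 2-4\epsilon$.

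For (a), implication (i) $\Rightarrow$ (ii) is a direct reading of Observation~\ref{O: Fr} at $\alpha=1/7$, which provides a uniform $\epsilon_0>0$ such that every $x\in S_X$ is $\epsilon_0$-cone smooth (so the set of such points is all of $S_X$, hence trivially dense). The substantive content is (ii) $\Rightarrow (U\mathcal{I}_2)$, which I plan to handle by contradiction: assuming it fails, choose for each $n$ a point $x_n\in S_X$ and disjoint unit balls $B_1^n,B_2^n$ outside $B_X$ with $\dist(x_n,B_i^n)\le 1/n$; use density to pick an $\epsilon$-cone smooth $x'_n$ with $\|x_n-x'_n\|\le 1/n$, and repeat the scheme of the first paragraph at $x'_n$. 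Cone smoothness at $x'_n$ applied to the nearest points $p_i^n\in\partial B_i^n$ (which lie outside $B_X$) delivers the weak constraint $f_{x'_n}(u_i^n)>-1/7$ on the approach directions $u_i^n=(c_i^n-x'_n)/\|c_i^n-x'_n\|$, while $\|c_1^n-c_2^n\|>2$ forces $\|u_1^n-u_2^n\|\to 2$ and $\|c_i^n\|>2$ forces each $u_i^n$ to be a near-norming vector of $f_{x'_n}$; the contradiction emerges from a careful slice upper bound at $x'_n$ incompatible with the Lemma~\ref{L: trebollelur} conclusion. The main obstacle is precisely this last step, since $\epsilon$-cone smoothness is weaker than LND2 and does not by itself bound $\mathrm{diam}\{y\in S_X:\|x'+y\|\ge 2-\delta\}$; its resolution has to exploit the uniformity of $\epsilon$ across the dense cone smooth set combined with the structural inequalities $\|c_i\|>2$ and $\|c_1-c_2\|>2$.
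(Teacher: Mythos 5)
Your treatment of parts (b) and (c) is correct and follows the paper's route. Part (b) is the same one-line reduction to Lemma~\ref{L: costante di Kottman}. In (c), the reductions (iv)$\Rightarrow$(v) and (vi)$\Rightarrow$(vii) are routine as you say, and (v)$\Rightarrow(\mathcal I_{2,\epsilon})$ via Lemma~\ref{L: trebollelur} is exactly the paper's argument. For (vii) you take a slightly more direct Šmulyan path: any $g\in S_{X^*}$ norming $x+y$ with $\|x+y\|\ge 2-2\epsilon$ has $g(x)\ge1-2\epsilon$, so Fréchet smoothness at $x$ forces $\|g-f_x\|$ small, whence the whole ``slice'' $\{y\in S_X:\|x+y\|\ge2-2\epsilon\}$ sits inside $\Sigma$, contradicting the lower bound from Lemma~\ref{L: trebollelur}. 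The paper instead extracts a sequence $\{y_n\}$, produces norm-minimizers $z_n\in(x,y_n)$ and separating functionals $f_n$, and then applies the same Šmulyan criterion to $\{f_n\}$. The two arguments are morally the same; yours is a bit shorter, and I see no flaw in it.

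The genuine gap is in part (a), at the implication (ii)$\Rightarrow(U\mathcal I_2)$, and you are right to flag it yourself. The reduction (i)$\Rightarrow$(ii) via Observation~\ref{O: Fr} at $\alpha=1/7$ matches the paper, and the density step you sketch (a point within $\delta/2$ of a cone smooth point having $(\mathcal I_{2,\delta})$ inherits $(\mathcal I_{2,\delta/2})$) is a sound way to pass from the dense set to all of $S_X$. What is missing is the core geometric estimate: that an $\epsilon$-cone smooth point $x$ has property $(\mathcal I_{2,\delta(\epsilon)})$ for a $\delta(\epsilon)>0$ depending only on $\epsilon$. The paper gets this precisely by citing~\cite[Lemma~4.1]{DEVETIL}, which supplies exactly this bound; without it, neither your attempt nor the paper's text contains a self-contained proof. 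Moreover, the specific route you propose is doubtful in one place: you assert that $\|c_i^n\|>2$ forces $u_i^n$ to be near-norming for $f_{x'_n}$. This would follow if $x'_n$ had a unique norming functional (as in the Fréchet smooth case), but $\epsilon$-cone smoothness guarantees only the existence of some $f_{x'_n}$ for which the cone inclusion holds and does not rule out other supporting functionals; the near-norming functional coming from $\|x'_n+u_i^n\|\approx2$ need not coincide with $f_{x'_n}$. So the contradiction you hope to extract from Lemma~\ref{L: trebollelur} does not close without a substantially different use of cone smoothness, which is what \cite[Lemma~4.1]{DEVETIL} provides.
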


\begin{proof}  {\em(a)} By the observation immediately after Definition~\ref{D: cs}, (i) implies (ii). Moreover, if (ii) is satisfied, \cite[Lemma~4.1]{DEVETIL} easily implies that $X$ has property $(U\mathcal I_2)$.

\smallskip\noindent
{\em(b)}	It follows immediately by Lemma~\ref{L: costante di Kottman}.

\smallskip\noindent
{\em (c)}
	It is clear that (iv) implies (v). Moreover, if (v) is satisfied it follows by Lemma~\ref{L: trebollelur} that $x$ has property $(\mathcal I_{2,\epsilon})$ for some $\epsilon>0$.  
	Finally, it is clear that (vi) implies (vii). Let us prove that if (vii) is satisfied then $x$ has property $(\mathcal I_{2,\epsilon})$ for some $\epsilon>0$. We proceed as in the last part of the proof of \cite[Theorem~4.9]{DEVETIL}. Suppose on the contrary that, for each $\epsilon>0$, $x$ fails property $(\mathcal I_{2,\epsilon})$. Then there exist sequences $\{w_n\}$, $\{u_n\}$ in $X$ such that
	\begin{itemize}
		\item for each $n\in\N$, there exist  $B_n,C_n$, closed balls of radius 1, such that $B_X,B_n,C_n$ are pairwise disjoint and $w_n\in \partial B_n,u_n\in \partial C_n$;
		\item $\mathrm{diam}\{x,w_n,u_n\}\to0$.
	\end{itemize}
	By Lemma~\ref{L: trebollelur}, for each $\delta>0$, we have that
	$\textstyle\mathrm{diam}\{y\in S_X:\|\frac{x+y}2\|\geq1-\delta\}=2$.
	This easily implies existence of a sequence $\{y_n\}\subset S_X$ such that
	$\|\frac{x+y_n}2\|\to1$, and $\mathrm{diam}(\{y_n\}_{n\geq n_0})=2$ for each $n_0\in\N$.
	By convexity of the norm,
	for each $n\in\N$ there exists $z_n\in(x,y_n)$ such that
	$\|z_n\|=\min \{\|z\|: z\in[x,y_n]\}$. It is not difficult to see that
	$$\|z_n\|\geq\|x+y_n\|-1$$
	(indeed, if $z_n'\in(x,y_n)$ is such that $\frac{z_n+z_n'}2=\frac{x+y_n}2$, then
	$\|x+y_n\|=\|z_n+z_n'\|\le\|z_n\|+1$). For each $n\in\N$, let
	$f_n\in X^*$ be
	a norm-one functional that separates $\|z_n\|B_X$ and $[x,y_n]$;
	clearly, $$f_n(z_n)=\|z_n\|=f_n(x)=f_n(y_n).$$
	
	Notice that $\|z_n\|\to1$, that is, $f_n(x)\to1$. Since $x$ is a
	Fr\'echet smooth point of $B_X$, we have that $f_n\to f_x$ in the
	norm topology (see, e.g., \cite[Corollary~7.22]{FHHMZ}). It
	follows that $f_x(y_n)\to1$. In particular, $y_n$ belongs to
	$\Sigma$ for each sufficiently large $n$, and hence
	$\mathrm{diam}(\Sigma)\geq2$. This contradiction concludes the proof.
\end{proof}

\noindent By Proposition~\ref{P: suffcond}  and
Theorem~\ref{T: trebolle}, we obtain the following corollary.

\begin{corollary}\label{C: suffcond}
	Let $X$ be a Banach satisfying at least one of the following conditions:
	\begin{enumerate}
		\item $X$ is uniformly Fr\'echet smooth;
		\item there exists $\epsilon>0$ such that the set of all $\epsilon$-cone smooth points of $B_X$ is dense in $S_X$;
		\item $\K(X)$, the Kottman's constant of $X$, satisfies $\K(X)<2$;
		\item for each $x\in S_X$, at least one of the following conditions is satisfied:\begin{itemize}
			\item $x$ is an LUR point;
			\item $x$ is an LND2 point;
			\item $x$ is a Fr\'echet smooth and strongly exposed point of $B_X$;
			\item $x$ is a Fr\'echet smooth point and the unique norm-one functional $f_x\in X^*$ that supports $B_X$ at $x$ determines a slice $\Sigma$ of $B_X$
			with $\mathrm{diam}(\Sigma)<2$.
		\end{itemize}
	\end{enumerate}
	Then $X$ does not admit star-finite coverings by closed balls.
\end{corollary}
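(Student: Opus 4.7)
The plan is to deduce this directly from Theorem~\ref{T: trebolle} by showing that each of the four listed conditions forces $X$ to have property $(\mathcal{I}_{\aleph_0})$. Once this is established, the conclusion follows immediately.

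First, I would handle conditions (1) and (2) simultaneously. By Proposition~\ref{P: suffcond}(a), either assumption yields property $(U\mathcal{I}_2)$, i.e.\ a uniform $\epsilon>0$ such that each $x\in S_X$ has $(\mathcal{I}_{2,\epsilon})$. Since $2\le \aleph_0$, Remark~\ref{R:I}(a) upgrades $(\mathcal{I}_{2,\epsilon})$ to $(\mathcal{I}_{\aleph_0,\epsilon})$ at every point, so $X$ has $(U\mathcal{I}_{\aleph_0})$, and hence, by Remark~\ref{R:I}(b), property $(\mathcal{I}_{\aleph_0})$. Condition (3) is even more direct: Proposition~\ref{P: suffcond}(b) gives $(U\mathcal{I}_{\aleph_0})$ outright, which again implies $(\mathcal{I}_{\aleph_0})$.

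The main case to check is condition (4), where the relevant property may depend on the point. For each $x\in S_X$, one of the four listed local properties holds, and Proposition~\ref{P: suffcond}(c) provides an $\epsilon_x>0$ such that $x$ has $(\mathcal{I}_{2,\epsilon_x})$. Applying Remark~\ref{R:I}(a) with $\alpha=2$ and $\beta=\aleph_0$, I obtain that $x$ has $(\mathcal{I}_{\aleph_0,\epsilon_x})$ for this same $\epsilon_x$. Since this holds at every $x\in S_X$, by Definition~\ref{D: propertyI}(2) the space $X$ has property $(\mathcal{I}_{\aleph_0})$.

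In all four cases $X$ has property $(\mathcal{I}_{\aleph_0})$, so Theorem~\ref{T: trebolle} applies and yields that $X$ admits no star-finite covering by closed balls. There is no real obstacle here: the work has been done in Proposition~\ref{P: suffcond} and Theorem~\ref{T: trebolle}, and the only subtlety is the bookkeeping move from ``$2$-ball'' versions of property $\mathcal{I}$ to the ``$\aleph_0$'' version via Remark~\ref{R:I}(a), which is the right direction (any pair of pairwise disjoint balls is in particular a countable family, so non-existence for countable families is a stronger conclusion than for pairs, and this is exactly what the monotonicity of $(\mathcal{I}_{\alpha,\epsilon})$ in $\alpha$ records).
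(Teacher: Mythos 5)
Your proof is correct and follows exactly the route the paper intends: the paper itself gives only the one-line justification ``By Proposition~\ref{P: suffcond} and Theorem~\ref{T: trebolle}, we obtain the following corollary,'' and your write-up simply fills in the elementary bookkeeping, correctly applying Remark~\ref{R:I}(a) to pass from $(\mathcal{I}_{2,\epsilon})$ to $(\mathcal{I}_{\aleph_0,\epsilon})$ (which is indeed the ``easy'' direction of monotonicity in $\alpha$) and Remark~\ref{R:I}(b) to drop from the uniform to the non-uniform property. One small remark: both Theorem~\ref{T: trebolle} and the Corollary require $X$ to be infinite-dimensional (otherwise, e.g., $\R$ is uniformly Fr\'echet smooth yet is tiled by unit intervals); the paper's statement of the Corollary omits the hypothesis but clearly intends it, and your proof inherits that same omission harmlessly.
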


\smallskip

\subsection{Prohibitive conditions in spaces of continuous functions}  We shall use the following standard notation. Given a Hausdorff topological space $T$, by $C_b(T)$ we mean the Banach space of all bounded continuous real-valued functions on $T$, equipped with the supremum norm
$\|x\|_\infty:=\sup_{t\in T}|x(t)|$. In the case $T$ is compact, we simply write $C(T)$ instead of $C_b(T)$.
If $T$ is  a locally compact Hausdorff space, we denote by $C_0(T)$ the Banach space of all 
elements of $C_b(T)$ that vanish at infinity.

\begin{definition}
	Let $X$ be a normed space. We shall say that:
	\begin{enumerate}[(a)]
		\item a direction $v\in S_X$ is {\em important} if there exists $\alpha_v>0$ such that
		for each straight line $L\subset X$ which is parallel to $v$ and intersects $B_X$,
		one has 
		$\diam (L\cap B_X)\ge\alpha_v$;
		\item a point $x\in S_X$ is {\em ``good''} if  there exists an important direction $v\in S_X$ such that $\|x+tv\|>1$ for each $t>0$.
	\end{enumerate}
\end{definition}

\begin{theorem}\label{T: good pts}
	Let $X$ be an infinite-dimensional Banach space such that its ``good'' points are dense in $S_X$.
	Then $X$ has no countable star-finite covering by closed balls.
\end{theorem}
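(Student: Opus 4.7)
The plan is to argue by contradiction. Suppose $\mathcal B$ is a countable star-finite covering of $X$ by closed balls. Passing to a minimal subcovering (possible since $\mathcal B$ is point-finite), I may assume $\mathcal B$ is minimal. With $D$ and $H$ defined as in Observation~\ref{O: basic}, $H$ is $G_\delta$ in the Banach space $X$ and hence Baire, and $H = \bigcup_{B \in \mathcal B}(\partial B \cap H)$ expresses $H$ as a countable union of relatively closed sets. Baire's theorem therefore produces some $B_0 \in \mathcal B$ and an open set $V \subset X$ with $\emptyset \neq V \cap H \subset \partial B_0$.

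The next step is to locate a suitable base point. By star-finiteness only finitely many members $C_1,\dots,C_m$ of $\mathcal B \setminus \{B_0\}$ meet $B_0$, and every other member is disjoint from $\partial B_0 \subset B_0$; hence
$$\partial B_0 \cap H \;=\; \partial B_0 \setminus \textstyle\bigcup_{j=1}^m C_j$$
is \emph{open} in $\partial B_0$, and so is $V \cap H = (V \cap \partial B_0) \setminus \bigcup_j C_j$. Via the affine homeomorphism $\partial B_0 \to S_X$ the density of good points transfers into $V \cap H$, so I may pick a good $x_0 \in V \cap H$ with an associated important direction $v \in S_X$ satisfying $x_0 + tv \notin B_0$ for every $t > 0$; fix $\epsilon > 0$ with $U(x_0,\epsilon) \subset V$.

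I then study the ray $\{x_0 + tv : t > 0\}$. For each sufficiently small $t > 0$ the point $x_0 + tv$ lies in $U(x_0,\epsilon) \setminus B_0$ and hence belongs to some member of $\mathcal B \setminus \{B_0\}$. A single such ball cannot absorb a sequence $t_n \downarrow 0$, since its closedness would force $x_0$ into it, contradicting $\mathcal B(x_0) = \{B_0\}$. So there exist $t_n \downarrow 0$ and pairwise distinct balls $B_n \in \mathcal B \setminus \{B_0\}$ with $x_0 + t_n v \in B_n$. The argument now splits on the behaviour of $r(B_n)$.

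If $\limsup r(B_n) > 0$, pass to a subsequence with $r(B_n) \ge \rho > 0$; since $v$ is important, each $B_n$ meets the line $L := x_0 + \R v$ in a closed segment of length at least $\alpha_v \rho$, whose left endpoint is positive (as $x_0 \notin B_n$) but tends to $0$ (as $t_n \in B_n \cap L$ and $t_n \downarrow 0$), so any fixed $s \in (0, \alpha_v\rho/2)$ satisfies $x_0 + sv \in B_n$ for infinitely many $n$, violating point-finiteness. If instead $r(B_n) \to 0$, Corollary~\ref{C: one ball} provides $z_n \in \partial B_n \cap H$, and since $z_n$ and $x_0 + t_n v$ both lie in $B_n$ we have $\|z_n - x_0\| \le 2r(B_n) + t_n \to 0$, so eventually $z_n \in U(x_0,\epsilon) \cap H \subset \partial B_0 \subset B_0$; together with $z_n \in B_n \ne B_0$ this forces $|\mathcal B(z_n)| \ge 2$, contradicting $z_n \in H$. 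The main obstacle is arranging $x_0$ to be simultaneously a good point and a member of $V \cap H$; this is unlocked by the observation that star-finiteness makes $V \cap H$ relatively open in $\partial B_0$, which lets the density of good points on $S_X$ be applied inside $V \cap H$.
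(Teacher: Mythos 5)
Your proof is correct and takes essentially the same approach as the paper: Baire category on $H$ to locate a ball $B_0$ and a ``good'' boundary point $x_0\in H$, then the important direction forces small balls along the ray so that Corollary~\ref{C: one ball} is contradicted. The only minor difference is that you handle the radii via a $\limsup$-case split (with a point-finiteness contradiction when the radii stay bounded below), whereas the paper uses star-finiteness to extract a pairwise disjoint subfamily meeting the ray and deduces $r(B''_k)\to 0$ directly.
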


\begin{proof}
	Proceeding by contradiction, let $\mathcal B=\{B_n\}_{n\in\N}$ be a countable star-finite covering of $X$ by closed balls. We can (and do) assume that $\mathcal B$ is minimal, and denote 
	$$\textstyle
	D:=\bigcup_{n\in\N} \partial B_n\,,\quad
	H:=\{x\in D: |\mathcal{B}(x)|=1\}.
	$$
	By Observation~\ref{O: basic}(d), $H=\bigcup_{n\in\N}(\partial B_n\cap H)$ is a Baire space.
	Therefore there exists $m\in\N$ such that $\partial B_m\cap H$ is not nowhere dense in $H$.
	Using the fact that $\partial B_m\cap H$ is a relatively open set in $\partial B_m$ 
	(see Corollary~\ref{C: one ball}), it easily follows that there exist $x_0\in\partial B_m$ and $\epsilon>0$ so that
	\begin{equation}\label{E: UH}
	U(x_0,\epsilon)\cap H\subset \partial B_m\cap H.
	\end{equation}
	We can (and do) clearly assume that $B_m=B_X$ and $x_0$ is a ``good'' point. 
	Let $v\in S_X$ be an important direction such that the half-line
	$$
	L:=\{x_0+tv: t>0\}
	$$
	is disjoint from $B_X$. Notice that the subfamily 
	$\mathcal{B}':=\{B\in\mathcal{B}: B\cap L\ne\emptyset\}$ covers $L$,
	and $x_0\notin\bigcup\mathcal{B}'$ is necessarily a singular point for $\mathcal{B}'$. By star-finiteness, there exists an infinite disjoint subfamily $\mathcal{B}''\subset\mathcal{B}'$ whose elements are disjoint from $B_X$, and such that
	$$
	\inf_{B''\in\mathcal{B}''}d(x_0,B'')=0.
	$$
	Write $\mathcal{B}''=\{B''_k\}_{k\in\N}$ and notice that we can assume that $d(x_0,B''_k\cap L)\to0$. Then
	$\diam(B''_k\cap L)\to0$. Since the direction $v$ is important, we obtain that
	$$
	r(B''_k)\le(1/\alpha)\diam(B''_k\cap L)\to0.
	$$
	For each sufficiently large $k$, $B''_k\subset U(x_0,\epsilon)$, and since $B''_k$ is disjoint from $B_X=B_m$
	we obtain from \eqref{E: UH} that $B''_k\cap H=\emptyset$ for such $k$. But this contradicts Corollary~\ref{C: one ball}. We are done.
\end{proof}

\begin{theorem}
	Let $T$ be an infinite Hausdorff topological space whose isolated points form a dense subset. Let $X$ be a closed subspace of $C_b(T)$ such that $X$ contains the characteristic function 
	$\mathds{1}_{\{t\}}$ for each isolated point $t\in T$. Then the Banach space $X$ has no countable star-finite covering by closed balls.
\end{theorem}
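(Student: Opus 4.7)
The plan is to apply Theorem~\ref{T: good pts}: it suffices to show that the set of ``good'' points is dense in $S_X$. The natural candidates for important directions are the vectors $\pm\mathds{1}_{\{t_0\}}$ where $t_0\in T$ is isolated, which lie in $X$ by hypothesis and have norm $1$.

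First I would verify that for every isolated $t_0\in T$ the vector $v_{t_0}:=\mathds{1}_{\{t_0\}}$ is important with $\alpha_{v_{t_0}}=2$. Indeed, any line $L$ parallel to $v_{t_0}$ that meets $B_X$ passes through some $y\in B_X$; since $v_{t_0}$ is supported at the single point $t_0$, the scalars $s$ with $y+sv_{t_0}\in B_X$ form the interval $[-1-y(t_0),\,1-y(t_0)]$, which has length $2$, and hence $\diam(L\cap B_X)=2$. By the same argument $-v_{t_0}$ is important as well.

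Next I would establish density of good points. Given $y\in S_X$ and $\varepsilon\in(0,1)$, pick $t^*\in T$ with $|y(t^*)|>1-\varepsilon/2$; by continuity of $y$ there is an open neighbourhood $U$ of $t^*$ on which $|y|>1-\varepsilon/2$ and $y$ keeps constant sign, and by density of isolated points there is an isolated $t_0\in U$. Set $\sigma:=\operatorname{sign}(y(t_0))\in\{-1,+1\}$ and
$$
x\,:=\,y+(\sigma-y(t_0))\,\mathds{1}_{\{t_0\}}\in X.
$$
Then $x(t_0)=\sigma$, $x(t)=y(t)$ for $t\neq t_0$, so $\|x\|_\infty=1$ and $\|x-y\|_\infty=|\sigma-y(t_0)|<\varepsilon/2$. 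Finally, taking the important direction $v:=\sigma\,v_{t_0}\in S_X$, for every $t>0$ one has $\|x+tv\|_\infty\ge|x(t_0)+t\sigma|=1+t>1$, so $x$ is a good point arbitrarily close to $y$.

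Having established that good points are dense in $S_X$, Theorem~\ref{T: good pts} applies and yields the conclusion. There is no real obstacle; the only substantive choice is recognising that characteristic functions of isolated points supply important directions, after which the continuity of $y\in C_b(T)$ combined with density of isolated points does all the work of producing a nearby good point.
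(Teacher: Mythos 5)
Your proposal is correct and follows essentially the same route as the paper: both apply Theorem~\ref{T: good pts} after verifying that the characteristic functions of isolated points are important directions and that good points are dense in $S_X$. The only cosmetic difference is how the nearby good point is produced --- you directly overwrite the value at $t_0$ by $\pm 1$, whereas the paper pushes $x$ along $v=\mathds{1}_{\{t_0\}}$ to the largest $\lambda_0$ keeping $\|x+\lambda_0 v\|_\infty\le 1$ --- but this does not change the substance of the argument.
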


\begin{proof}
	By Theorem~\ref{T: good pts} it suffices to show that ``good'' points of $X$ are dense in $S_X$.
	Fix arbitrary $x\in S_X$ and $\epsilon>0$. At least one of the open sets $\{t\in T: x(t)>1-\epsilon\}$
	and $\{t\in T: x(t)<-1+\epsilon\}$ is nonempty, say it is the first one (the other case is done in a similar way).
	So there exists an isolated point $t_0\in T$ such that $x(t_0)>1-\epsilon$.
	
	We claim that $v:=\mathds{1}_{\{t_0\}}\in S_X$ is an important direction for $X$. To this end, consider the line
	$L:=\{z+\lambda v: \lambda\in\R\}$ where $z\in X$, $\|z\|_\infty\le1$, and denote
	$$
	\beta:=\min\{\lambda\in\R: \|z+\lambda v\|_\infty\le1\}
	\ \text{and}\ 
	\gamma:=\max\{\lambda\in\R: \|z+\lambda v\|_\infty\le1\}.
	$$
	For each $\eta>0$ we have 
	$$
	1<\|z+(\beta-\eta)v\|_\infty=\max\left\{\sup_{t\ne t_0}|z(t)|\,,\,|z(t_0)+\beta-\eta|\right\}
	=|z(t_0)+\beta-\eta|,
	$$
	which implies that $z(t_0)+\beta=-1$. Analogously, we obtain that
	$|z(t_0)+\gamma+\eta|>1$ ($\eta>0$), and hence $z(t_0)+\gamma=1$. It follows that
	$\diam(L\cap B_X)=\gamma-\beta=2$, and our claim is proved.
	
	Now, by the choice of $t_0$, for each $\lambda\ge\epsilon$ we have
	$(x+\lambda v)(t_0)>(1-\epsilon)+\epsilon=1$ and hence $\|x+\lambda v\|_\infty>1$. Put
	$\lambda_0:=\max\{\lambda\ge0: \|x+\lambda v\|_\infty\le1\}$ and notice that $\lambda_0<\epsilon$ and
	$\|x+\lambda_0 v\|_\infty=1$. The point $y:=x+\lambda_0 v\in S_X$ is ``good'' since $v$ is an important direction
	and $\|y+\eta v\|_\infty=\|x+(\lambda_0+\eta)v\|_\infty>1$ ($\eta>0$).
	Moreover, $\|y-x\|_\infty=\lambda_0<\epsilon$.
	This completes the proof.
\end{proof}

\begin{corollary}\label{C: no countable}
	Let $T$ be an infinite Hausdorff topological space whose isolated points  are dense, and $\Gamma$ a nonempty infinite set.
	Let $X$ be one of the following spaces:
	\begin{enumerate}[(a)]
		\item $C(T)$ where $T$ is compact;
		\item $C_0(T)$ where $T$ is locally compact;
		\item $\ell_\infty(\Gamma)$ or $c_0(\Gamma)$.
	\end{enumerate}
	Then $X$ has no countable star-finite covering by closed balls.
\end{corollary}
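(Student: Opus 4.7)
The plan is simply to verify, in each of the three cases, that the hypotheses of the preceding theorem are satisfied; the corollary then follows at once. That is, I need to exhibit, for each space $X$ listed in (a)--(c), an infinite Hausdorff space $S$ whose isolated points are dense, such that $X$ is (canonically identified with) a closed subspace of $C_b(S)$ containing $\mathds{1}_{\{s\}}$ for every isolated point $s \in S$.

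For case (a), take $S := T$, which is infinite Hausdorff with dense isolated points by assumption. Since $T$ is compact, $C(T) = C_b(T)$ is trivially a closed subspace of itself. For any isolated point $t \in T$, the singleton $\{t\}$ is both open and closed in $T$, so $\mathds{1}_{\{t\}}$ is continuous and lies in $C(T)$. For case (b), again set $S := T$; then $C_0(T)$ is a closed subspace of $C_b(T)$ (a uniform limit of functions vanishing at infinity still vanishes at infinity). For an isolated $t \in T$, the set $\{t\}$ is compact and open, so $\mathds{1}_{\{t\}}$ is continuous with compact support and thus belongs to $C_0(T)$.

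For case (c), equip $\Gamma$ with the discrete topology and set $S := \Gamma$. Then $S$ is an infinite locally compact Hausdorff space in which every point is isolated, so the isolated points are (trivially) dense. Under the natural identifications $\ell_\infty(\Gamma) = C_b(\Gamma)$ and $c_0(\Gamma) = C_0(\Gamma)$, both spaces are closed in $C_b(\Gamma)$ and obviously contain $\mathds{1}_{\{\gamma\}}$ for every $\gamma \in \Gamma$.

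In each case the preceding theorem applies and yields the desired non-existence of a countable star-finite covering by closed balls. There is no real obstacle here: the content of the corollary is simply a translation of the abstract hypothesis of the theorem into standard function-space settings, the only mild observation being that in (b) and in the $c_0(\Gamma)$ part of (c) one has to pass to the closed subspace of $C_b(S)$ consisting of functions vanishing at infinity, which still contains every characteristic function of an isolated singleton.
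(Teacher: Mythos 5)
Your proposal is correct and takes exactly the approach the paper intends: the corollary is stated immediately after the theorem about closed subspaces of $C_b(T)$ with no separate proof given, so it is meant to follow by the straightforward verifications you carried out (identifying $C(T)=C_b(T)$ for compact $T$, noting $C_0(T)$ is closed in $C_b(T)$ and contains $\mathds{1}_{\{t\}}$ for isolated $t$ since $\{t\}$ is compact open, and viewing $\Gamma$ as a discrete space so that $\ell_\infty(\Gamma)=C_b(\Gamma)$ and $c_0(\Gamma)=C_0(\Gamma)$). No gaps; this matches the paper's intent.
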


The following result shows that $c_0(\Gamma)$ ($\Gamma$ infinite) has no (not necessarily countable)
star-finite covering by closed balls, and that
property $(\mathcal I_{\aleph_0})$ is only a sufficient condition for $X$ to not admit star-finite coverings by closed  balls.

\begin{theorem}\label{T: c_0dirbuona}
	Let $\Gamma$ be an infinite set. Then $c_0(\Gamma)$ does not admit any star-finite covering by closed balls,
	and it fails $(\mathcal I_{\aleph_0})$.	
\end{theorem}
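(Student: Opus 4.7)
The theorem has two claims. For the non-existence of star-finite coverings by closed balls, my plan is to reduce to the countable case (already settled in Corollary~\ref{C: no countable}) via intersection with a separable subspace. The key sup-norm observation I will exploit is that for any countable $\Gamma_0\subseteq\Gamma$, the natural isometric copy $Y:=c_0(\Gamma_0)\hookrightarrow c_0(\Gamma)$ (extending by zero) has the property that for every closed ball $B=B(c,r)$ in $c_0(\Gamma)$, the intersection $B\cap Y$ is either empty or the closed ball $B_Y(c|_{\Gamma_0},r)$ in $Y$ of the \emph{same} radius $r$; the dichotomy is governed by the condition $\sup_{\gamma\notin\Gamma_0}|c(\gamma)|\le r$.

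Assuming for contradiction that $\mathcal B$ is a star-finite covering of $c_0(\Gamma)$ by closed balls, I pick any countably infinite $\Gamma_0\subseteq\Gamma$ and set $\mathcal B_Y:=\{B\cap Y : B\in\mathcal B,\,B\cap Y\ne\emptyset\}$; by the above, this is a covering of $Y$ by closed balls of positive radius. I then verify that $\mathcal B_Y$ inherits star-finiteness: if $B_Y=B\cap Y\in\mathcal B_Y$, each distinct $B'_Y\in\mathcal B_Y$ meeting $B_Y$ comes from some $B'\in\mathcal B$ with $B'\ne B$ and $B'\cap B\ne\emptyset$, and the assignment $B'_Y\mapsto B'$ (via a fixed choice of preimage) is injective, so only finitely many such $B'_Y$ exist. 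Since $Y$ is separable and each element of $\mathcal B_Y$ has nonempty interior in $Y$, Observation~\ref{O: card upper bound} forces $|\mathcal B_Y|\le\aleph_0$. Hence $\mathcal B_Y$ is a \emph{countable} star-finite covering of $Y\cong c_0(\Gamma_0)$ by closed balls, directly contradicting Corollary~\ref{C: no countable}(c).

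For the second claim, I exhibit an explicit point of the unit sphere failing $(\mathcal I_{\aleph_0})$. Fix any $\gamma_0\in\Gamma$, set $x_0:=e_{\gamma_0}$, and, given an arbitrary $\epsilon\in(0,1)$, choose distinct $\gamma_1,\gamma_2,\ldots\in\Gamma\setminus\{\gamma_0\}$. Put
\[
v_n:=-e_{\gamma_1}-\cdots-e_{\gamma_{n-1}}+e_{\gamma_n},\qquad c_n:=(2+\epsilon)e_{\gamma_0}+(1+\epsilon)v_n,\qquad B_n:=B(c_n,1).
\]
Routine sup-norm calculations give $\|c_n\|_\infty=2+\epsilon$ (so $B_n\cap B_{c_0(\Gamma)}=\emptyset$); for $n<m$ the coordinate $\gamma_n$ contributes $\|c_n-c_m\|_\infty\ge 2(1+\epsilon)>2$, so the family $\{B_n\}$ is pairwise disjoint; and $\|c_n-x_0\|_\infty=1+\epsilon$, giving $\dist(x_0,B_n)=\epsilon$ for every $n$. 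Since $\epsilon$ was arbitrary, $x_0$ fails $(\mathcal I_{\aleph_0,\delta})$ for every $\delta>0$, witnessing that $c_0(\Gamma)$ fails $(\mathcal I_{\aleph_0})$.

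The main conceptual obstacle in the first part is that the Baire-category argument underpinning Theorem~\ref{T: good pts} is intrinsically countable and does not adapt to an uncountable covering. What makes the reduction possible is the rigidity of the sup norm: intersection with a coordinate subspace sends closed balls to closed balls \emph{of the same radius}, not merely to convex bodies (which is what would happen in, say, a uniformly convex or strictly convex renorming). Once this rigidity is exploited, the hard work already done in Corollary~\ref{C: no countable} closes the argument.
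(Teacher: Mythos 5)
Your proposal is correct and follows essentially the same two-part argument as the paper: for the first claim, reduce to the countable case by intersecting with a coordinate subspace $Y\cong c_0(\Gamma_0)$ (exploiting that a sup-norm ball traces to a ball of the same radius on $Y$) and invoke Observation~\ref{O: card upper bound} together with Corollary~\ref{C: no countable}(c); for the second claim, exhibit an explicit countable family of pairwise disjoint unit balls avoiding $B_X$ but clustering arbitrarily near $e_{\gamma_0}\in S_X$. The only difference is cosmetic: your centers $c_n=(2+\epsilon)e_{\gamma_0}+(1+\epsilon)v_n$ replace the paper's $(1+\delta)u_n$ with $u_n=2e_1+e_2+\cdots+e_n-e_{n+1}$, but the two constructions are essentially the same idea with a different parametrization.
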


\begin{proof}
Proceeding by contradiction, assume
that such a covering exists.
	Fix an infinite countable set $\Gamma_0\subset\Gamma$ and consider the separable subspace
	$Y:=\{x\in X: x(\gamma)=0 \text{ for each } \gamma\in\Gamma\setminus\Gamma_0\}$.
	The family $\mathcal{B}':=\{B\cap Y: B\in\mathcal{B}\ne\emptyset\}$ is a star-finite covering of $Y$.
	It is an easy exercise to see that each member of $\mathcal{B}'$ is in fact a closed ball in $Y$.
	By Observation~\ref{O: card upper bound}, $\mathcal{B}'$ is countable, but this contradicts
	Corollary~\ref{C: no countable}(c) since $Y$ is isometric to $c_0(\Gamma_0)$.
	For the second part, let the sequence $\{u_n\}\subset 2S_{c_0(\Gamma)}$ be defined by $$u_1=2e_1-e_2,\ \ \ u_n=2e_1+e_2+\ldots+e_n-e_{n+1}\ \ \ \ (n>1).$$ We claim that the point $x=e_1\in S_{c_0}$ fails property $(\mathcal I_{{\aleph_0},\epsilon})$, whenever $\epsilon>0$.
	Indeed, for each $\delta>0$, we can consider the family 
	$$\mathcal D:=\bigl\{(1+\delta)u_n+B_{c_0(\Gamma)};\,n\in\N\bigr\},$$ 
	and observe that $\mathcal D$ is a family of pairwise disjoint closed balls of radius 1 not intersecting $B_X$ and such that $|\mathcal D|={\aleph_0}$. Moreover, we have 
	$\mathrm{dist}(x,B)=2\delta$, whenever $B\in\mathcal D$. This clearly implies that $c_0(\Gamma)$ does not have property $(\mathcal I_{\aleph_0})$.
\end{proof}

\smallskip

\subsection{Simple coverings by closed balls}
Recall that a simple covering is a covering by pairwise disjoint sets.
It is a well-known fact that each simple covering of $\R$ by at least two nonempty closed subsets
of $\R$ is uncountable (see e.g.~\cite[Fact~3.2]{DEVETIL}). Hence if a (nontrivial) normed space $X$ admits a simple covering by closed balls, then necessarily $X$ is nonseparable and the covering is uncountable. Moreover, from this result we can easily deduce that certain non-separable $C(K)$ spaces do not admit simple covering by closed balls.

\begin{proposition}\label{P: C(K) no simple coverings}
	Let $K$ be a compact space. Suppose that $K$ contains an isolated point, then $C(K)$ does not admit  simple coverings by closed balls. 
\end{proposition}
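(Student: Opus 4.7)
The plan is to derive a contradiction from the strong structure that closed balls of $C(K)$ impose on lines in the direction of an isolated coordinate. Let $t_0 \in K$ be an isolated point, set $v := \mathds{1}_{\{t_0\}} \in C(K)$, and assume toward a contradiction that $\mathcal B$ is a simple covering of $C(K)$ by closed balls. I would then fix an arbitrary $y_0 \in C(K)$ and consider the affine line $L := \{y_0 + \lambda v : \lambda \in \R\}$, which under $\lambda \mapsto y_0 + \lambda v$ is an isometric copy of $\R$ sitting inside $C(K)$.

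The key step will be an explicit computation showing that balls cut $L$ in maximally long intervals. Namely, for any closed ball $B = B(c,r) \subset C(K)$ and any $\lambda \in \R$,
\[
\|y_0 + \lambda v - c\|_\infty = \max\Bigl\{\sup_{t\ne t_0}|y_0(t)-c(t)|,\ |y_0(t_0) - c(t_0) + \lambda|\Bigr\},
\]
precisely because $t_0$ is isolated and $v$ is supported at $t_0$. Setting $M := \sup_{t\ne t_0}|y_0(t)-c(t)|$, one concludes that $B \cap L = \emptyset$ whenever $M > r$, and otherwise $B \cap L$ corresponds to the set of $\lambda$ with $|\lambda + y_0(t_0) - c(t_0)| \le r$, a closed interval of length exactly $2r$.

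Consequently, the family $\mathcal B' := \{B \cap L : B \in \mathcal B,\ B \cap L \ne \emptyset\}$ is a simple covering of $L \cong \R$ by closed intervals of positive length. Since $L$ is unbounded and each such interval is bounded, we have $|\mathcal B'| \ge 2$; on the other hand the interiors of the members of $\mathcal B'$ form a pairwise disjoint family of nonempty open subsets of $L$, so by selecting one rational point in each we obtain an injection $\mathcal B' \hookrightarrow \Q$, showing that $\mathcal B'$ is at most countable. This contradicts the classical fact, recalled at the start of this subsection, that $\R$ admits no simple covering by countably many (but more than one) nonempty closed subsets. The main (and essentially only) obstacle is to notice that the isolated coordinate $v$ forces balls to cut $L$ in intervals of the maximal length $2r$; once this is in hand, the rest of the argument is formal bookkeeping.
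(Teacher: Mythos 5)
Your proof is correct and follows essentially the same approach as the paper: you use the isolated point to produce a direction $v=\mathds{1}_{\{t_0\}}$ along which every closed ball of radius $r$ meets a parallel line in a closed interval of length exactly $2r$ (if at all), and then reduce to the impossibility of a simple covering of $\R$ by countably many closed sets. The only cosmetic differences are that the paper works with the specific line through the origin while you allow an arbitrary base point $y_0$, and you spell out the countability bookkeeping (disjoint nonempty interiors injecting into $\Q$) which the paper leaves implicit.
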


\begin{proof}
	Let $k\in K$ be an isolated point, then the characteristic function $\mathds{1}_{\{k\}}$ is a continuous function on $K$. Let $B=B(f,r)$ be a closed ball in $C(K)$ intersecting the straight line $l=\{t\mathds{1}_{\{k\}}:t\in\R\}$. 
	We claim that $B\cap l$ is a non-degenerate closed interval. Indeed, since $B\cap l\neq \emptyset$, we have $|f(x)|\leq r$ for each $x\in K\setminus \{k\}$. It follows that
	$t\mathds{1}_{\{k\}}\in B$ if and only if $|t-f(x)|\le r$, proving our claim.
	Now it is clear that $C(K)$ can not be  covered by a simple family of closed balls since otherwise we would get a simple covering of $\R$ by non-degenerate closed intervals, which is impossible.
\end{proof}

Next theorem shows the relation between separated families of vectors and simple coverings by closed balls. For convenience of the reader we state the following known lemma.

\begin{lemma}\label{L: renorming}
	Let $(X,\|\cdot\|_X)$ and $(Y,\|\cdot\|_Y)$ be normed spaces  and $\theta\in[1,2]$. Let $A\subset S_X$ be such that  $\mathrm{sep}\, A\geq\theta$. Let $M\in[1,\theta]$ and let  $T:X\to Y$ be an isomorphic embedding  such that $\|T\|\cdot\|T^{-1}\|\leq M$.  Then the set $B:=\{\frac {Tx}{\|Tx\|_Y};\,x\in A\}\subset S_{Y}$ satisfies
	$$\textstyle \mathrm{sep}\, B\geq \frac\theta M.$$ 
\end{lemma}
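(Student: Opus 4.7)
The plan is to unpack the definitions and perform a short direct estimate on pairs of points. Write $\alpha:=\|T\|$ and $\beta:=\|T^{-1}\|$ (the latter understood on the image $T(X)$), so that $\alpha\beta\le M$ and
$$\|x\|_X/\beta\le \|Tx\|_Y\le \alpha\|x\|_X\qquad(x\in X).$$
Fix two distinct points $x_1,x_2\in A$ and set $a:=\|Tx_1\|_Y$, $b:=\|Tx_2\|_Y$. Since $\|x_i\|_X=1$, both $a,b$ lie in $[1/\beta,\alpha]$; by the symmetry between $x_1$ and $x_2$ we may assume $a\le b$. The goal is to show that the corresponding points $y_i:=Tx_i/\|Tx_i\|_Y\in B$ satisfy $\|y_1-y_2\|_Y\ge \theta/M$.

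The main computation starts from the identity $y_1-y_2=\frac{1}{ab}\,T(bx_1-ax_2)$ and the bound $\|T(\cdot)\|_Y\ge \|\cdot\|_X/\beta$, which together give
$$\|y_1-y_2\|_Y\ge \frac{1}{ab\beta}\,\|bx_1-ax_2\|_X.$$
Now I would decompose $bx_1-ax_2=b(x_1-x_2)+(b-a)x_2$ and apply the triangle inequality, together with $\|x_1-x_2\|_X\ge \theta$ and $\|x_2\|_X=1$, to obtain
$$\|bx_1-ax_2\|_X\ge b\theta-(b-a)$$
(the choice of writing $b(x_1-x_2)$ rather than $a(x_1-x_2)$ is exactly what gives the stronger estimate under $a\le b$, and one checks that $b\theta-(b-a)\ge a>0$).

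A short algebraic rearrangement then yields
$$\frac{b\theta-(b-a)}{ab\beta}=\frac{\theta-1}{a\beta}+\frac{1}{b\beta}.$$
Since $a,b\le\alpha$, both $a\beta$ and $b\beta$ are $\le \alpha\beta\le M$; combined with $\theta\ge 1$, this gives $(\theta-1)/(a\beta)\ge (\theta-1)/M$ and $1/(b\beta)\ge 1/M$. Summing,
$$\|y_1-y_2\|_Y\ge \frac{\theta-1}{M}+\frac{1}{M}=\frac{\theta}{M},$$
as desired. The only subtle point in the argument is the choice of decomposition (whence the appearance of $b\theta$, not $a\theta$, in the key estimate); the remaining steps are routine, and no serious obstacle is anticipated.
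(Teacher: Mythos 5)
Your argument is correct and complete. Let me verify the key steps: the identity $y_1-y_2=\frac{1}{ab}T(bx_1-ax_2)$, the lower bound $\|T z\|_Y\ge\|z\|_X/\beta$ where $\beta=\|T^{-1}\|$, the decomposition $bx_1-ax_2=b(x_1-x_2)+(b-a)x_2$ with the reverse triangle inequality giving $\|bx_1-ax_2\|_X\ge b\theta-(b-a)=b(\theta-1)+a>0$, the algebraic identity $\frac{b(\theta-1)+a}{ab\beta}=\frac{\theta-1}{a\beta}+\frac{1}{b\beta}$, and finally $a\beta,b\beta\le\alpha\beta\le M$ combined with $\theta-1\ge0$ to get the bound $\theta/M$. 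All of these check out, and the crucial observation that one must put the larger coefficient $b$ on the difference $x_1-x_2$ (so that the sign of $b-a$ works in your favor) is exactly the point that makes the estimate tight.

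The paper itself gives no proof; it simply defers to the proof of Lemma~2.2 in H\'ajek--Kania--Russo. Your proposal is therefore a genuine self-contained derivation rather than a reproduction of the paper's text, and it is a clean, short, direct estimate on pairs that anyone can check without consulting the reference. One small stylistic note: you should state explicitly that $\|x_1-x_2\|_X\ge\theta$ follows from $\mathrm{sep}\,A\ge\theta$ applied to the distinct pair $x_1,x_2$, and you may as well remark that $T$ being an isomorphic embedding guarantees $Tx\ne 0$ for $x\in A\subset S_X$, so the normalization defining $B$ makes sense; but these are cosmetic, not gaps.
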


\begin{proof}
	See the proof of \cite[Lemma~2.2]{HajekKaniaRusso}.
\end{proof}

\begin{theorem}\label{T: no simple stable}
	Let $Y$ be a Banach space such that $K:=\K(Y,\aleph_1)<2$. Let  $T\colon X\to Y$ be an isomorphic embedding  satisfying $\|T\|\cdot\|T^{-1}\|<\frac2K$.
	Then $X$ does not admit  any simple covering by closed balls.
\end{theorem}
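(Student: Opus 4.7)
The plan is to argue by contradiction, assuming $X$ admits a simple covering $\mathcal{B}$ by closed balls. The argument proceeds in two stages: first I transfer information from $Y$ to $X$ via the embedding $T$ to conclude that $X$ has property $(\mathcal{I}_{\aleph_1})$; second I derive, from the existence of the simple covering, the \emph{failure} of $(\mathcal{I}_{\aleph_1})$, yielding the contradiction.

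For the transfer, set $M:=\|T\|\cdot\|T^{-1}\|$, so $1\le M<2/K$; I first show that $\K(X,\aleph_1)<2$. If not, then for every $\epsilon>0$ there exists $A\subset S_X$ with $|A|=\aleph_1$ and $\mathrm{sep}(A)\ge\theta:=2-\epsilon$. Choosing $\epsilon$ small enough that both $\theta>M$ (possible since $M<2$) and $\theta>MK$ (possible since $MK<2$), Lemma~\ref{L: renorming}, applied with parameter $M$, yields $B\subset S_Y$ with $|B|=\aleph_1$ and $\mathrm{sep}(B)\ge\theta/M>K$, contradicting the definition of the Kottman constant $\K(Y,\aleph_1)=K$. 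Hence $\K(X,\aleph_1)<2$, and by Lemma~\ref{L: costante di Kottman} the space $X$ has property $(U\mathcal{I}_{\aleph_1})$, and therefore also $(\mathcal{I}_{\aleph_1})$ by Remark~\ref{R:I}(b).

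For the second stage, fix any one-dimensional subspace $L\subset X$; it is separable and linearly isometric to $\R$. Since $\mathcal{B}$ is a simple covering, the family $\mathcal{B}_L:=\{B\cap L:B\in\mathcal{B},\ B\cap L\ne\emptyset\}$ is a partition of $L$ into nonempty subsets of $L$ that are closed, convex, and bounded---that is, into nonempty closed bounded intervals of $L\cong\R$. By the classical fact (already recalled at the beginning of this subsection) that $\R$ cannot be written as a disjoint union of countably many, but at least two, nonempty closed subsets, $\mathcal{B}_L$ must be uncountable. Hence uncountably many balls of $\mathcal{B}$ meet the separable subspace $L$. Since every simple covering is star-finite, Lemma~\ref{L: uncountable star.finite} then implies that $X$ fails $(\mathcal{I}_{\aleph_1})$, contradicting the first stage.

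The main point requiring care is the application of Lemma~\ref{L: renorming}, whose hypothesis $M\in[1,\theta]$ forces the separation parameter $\theta$ to be taken close to $2$; this is easy given $M<2$. The Sierpinski-type connectedness statement for $\R$ used in the second stage is classical and was already invoked in the motivation at the beginning of this subsection.
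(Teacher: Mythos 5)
Your proof is correct and follows essentially the same route as the paper's: both hinge on Lemma~\ref{L: uncountable star.finite} applied to a line met by uncountably many balls of the simple covering, Lemma~\ref{L: costante di Kottman}, and the transfer via Lemma~\ref{L: renorming}. The only difference is the order of the two halves---the paper assumes the covering exists, derives $\K(X,\aleph_1)=2$, and transfers through $T$ to contradict $\K(Y,\aleph_1)=K$, whereas you first transfer to establish $\K(X,\aleph_1)<2$ (hence property $(\mathcal{I}_{\aleph_1})$) and then derive its failure from the covering; this is the same argument, contraposed.
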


\begin{proof}
	Denote $M:=\|T\|\cdot\|T^{-1}\|$. Proceeding by contradiction,
	suppose that $X$ admits a simple covering $\B$ by closed balls. 
	Let $\ell$ be a line in $X$, and observe that uncountably many elements of $\B$ intersect $\ell$. By Lemma~\ref{L: uncountable star.finite}, $X$ fails property $(\mathcal I_{\aleph_1})$ (and hence it fails property $(U\mathcal I_{\aleph_1})$). By Lemma~\ref{L: costante di Kottman}, we have $\K(X,\aleph_1)=2$ and hence there exists $A\subset S_X$ such that $\mathrm{sep}\, A> MK$ and   $|A|=\aleph_1$. Lemma~\ref{L: renorming} implies existence of a set $B\subset S_{Y}$ satisfying
	$\mathrm{sep}\, B> \frac{MK}{M}=K$ and $|B|=\aleph_1$. But this contradicts the definition of $K$.  
\end{proof}

\noindent A famous result of Elton and Odell \cite{EltonOdell} states that if 
$\Gamma$ is an uncountable set then  $c_0(\Gamma)$ contains no $(1+\varepsilon)$-separated uncountable family of unit vectors, for any $\varepsilon>0$. That is, $\K(c_0(\Gamma),\aleph_1)\leq1$ (observe that this inequality trivially holds even if $\Gamma$ is countable). Hence, we get the following corollary.

\begin{corollary}\label{C: no simple c0Gamma}
	Let $\Gamma$ be a nonempty set, and $X$ a Banach space. If there exists an isomorphic embedding $T\colon X\to c_0(\Gamma)$ such that $\|T\|\cdot\|T^{-1}\|<2$, then
	$X$ does not admit  simple coverings by closed balls.
\end{corollary}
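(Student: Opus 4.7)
The plan is simply to apply Theorem~\ref{T: no simple stable} with $Y=c_0(\Gamma)$. So the whole task reduces to verifying its two numerical hypotheses: that $K:=\K(c_0(\Gamma),\aleph_1)<2$, and that the embedding constant satisfies $\|T\|\cdot\|T^{-1}\|<2/K$.

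For the first one, I would invoke the cited Elton--Odell theorem: for any uncountable $\Gamma$, $c_0(\Gamma)$ contains no $(1+\varepsilon)$-separated uncountable subset of $S_{c_0(\Gamma)}$, hence $\K(c_0(\Gamma),\aleph_1)\le 1$. If $\Gamma$ is countable, then $c_0(\Gamma)$ is separable, so $S_{c_0(\Gamma)}$ simply contains no subset of cardinality $\aleph_1$ at all, and the supremum defining $\K(c_0(\Gamma),\aleph_1)$ is taken over the empty family; thus trivially $\K(c_0(\Gamma),\aleph_1)\le 1$ in this case as well. In both cases $K\le 1<2$.

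For the second hypothesis, since $K\le 1$, we have $2/K\ge 2$, and the assumption $\|T\|\cdot\|T^{-1}\|<2$ gives at once $\|T\|\cdot\|T^{-1}\|<2\le 2/K$, which is the required strict inequality. (If $K=0$, i.e.\ the family in the supremum is empty, one checks directly that a simple covering of $X$ would yield via $T$ an uncountable separated family in $S_{c_0(\Gamma)}$ as in the proof of Theorem~\ref{T: no simple stable}, immediately contradicting Elton--Odell; but this degenerate case is already subsumed by interpreting $2/K$ as $+\infty$.) Invoking Theorem~\ref{T: no simple stable} finishes the argument. There is no real obstacle here; the work has already been done in assembling the Elton--Odell estimate and Theorem~\ref{T: no simple stable}, and the corollary is the immediate instantiation.
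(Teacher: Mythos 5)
Your plan is the same as the paper's: apply Theorem~\ref{T: no simple stable} with $Y=c_0(\Gamma)$, and verify that $K:=\K(c_0(\Gamma),\aleph_1)\le 1$ via Elton--Odell, so that $\|T\|\cdot\|T^{-1}\|<2\le 2/K$. This is exactly how the paper derives the corollary.

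However, your justification for the case of countable $\Gamma$ contains a genuine error: you assert that since $c_0(\Gamma)$ is separable, ``$S_{c_0(\Gamma)}$ simply contains no subset of cardinality $\aleph_1$ at all.'' This is false. Separability bounds the size of a \emph{dense} set, not of the space: $|S_{c_0}|=\mathfrak c\ge\aleph_1$, so subsets of cardinality $\aleph_1$ certainly exist, and the supremum defining $\K(c_0(\Gamma),\aleph_1)$ is not taken over the empty family. What \emph{is} true (and what actually does the job) is that in a separable metric space any uncountable set has a condensation point, hence every $A\subset S_{c_0(\Gamma)}$ with $|A|=\aleph_1$ has $\mathrm{sep}\,A=0$; therefore $\K(c_0(\Gamma),\aleph_1)=0\le 1$ for countable $\Gamma$. (Alternatively, embed $c_0(\Gamma)$ isometrically into $c_0(\Gamma')$ for an uncountable $\Gamma'\supset\Gamma$ and apply the Elton--Odell statement there.) The same misconception infects your parenthetical remark about when $K=0$: that equality does hold here, but not because the family is empty. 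Once the reason for $K\le 1$ is corrected, the rest of your argument---the arithmetic $\|T\|\cdot\|T^{-1}\|<2\le 2/K$ and the invocation of Theorem~\ref{T: no simple stable}---is fine and matches the paper.
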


Finally we observe that P. Koszmider in \cite{Kosz18} defined, under an additional set-theoretic assumption consistent with the usual axioms of ZFC, a connected compact space $K$ for which the Banach space $C(K)$ has no uncountable $(1+\varepsilon)$-separated set in the unit ball for any $\varepsilon>0$, hence $\K(C(K),\aleph_1)<2$. Therefore, by Theorem \ref{T: no simple stable}, $C(K)$ does not admit any simple covering by closed balls. 

\medskip

\subsection{Some open problems} We have already mentioned that separable nor\-med spaces do not admit a simple covering, however Theorem \ref{T: star-finite countable dimension} shows that normed spaces with countable dimension admit a star-finite covering by closed balls. On the other hand, in the present section we have provided various conditions for a Banach space not to have a star-finite covering by closed balls. Among them there is $c_0$ which admits a point-finite covering by closed balls. The following question naturally arises from these facts.
\begin{problem}
Does there exist a separable Banach space admitting a star-finite covering by closed balls?
\end{problem}

The following two problems should be compared with Corollary \ref{C: suffcond} and Corollary \ref{C: no countable}, respectively.

\begin{problem}
Does there exist an infinite-dimensional Fr\'echet smooth Banach space admitting a star-finite covering by
closed balls?
\end{problem}

\begin{problem}
Does there exist an infinite compact space $K$ for which $C(K)$ admits a star-finite (or even simple) covering by closed balls?
\end{problem}

\medskip

\section{A star-finite covering by Fr\'echet smooth bodies of $c_0(\Gamma)$}\label{section:existenceresult}

The purpose of this section is to show that, for any nonempty set $\Gamma$, the Banach space $c_0(\Gamma)$ admits a star-finite covering by Fr\'echet smooth bounded bodies. This is clearly trivial for any finite $\Gamma$; therefore, from now on $\Gamma$ will be an infinite set.

In order to define the desired bodies, we are going to define suitable Fr\'echet renormings of $c_0(\Gamma)$, whose balls, roughly speaking, have many flat faces.  Given $M>2$, let us consider the  equivalent norm on $c_0(\Gamma)$ defined for  $x\in c_0(\Gamma)$ by
$$\|x\|^2_M=\inf\{{\|x_1\|^2_\infty+M\|x_2\|^2_2};\, x_1\in c_0(\Gamma),x_2\in\ell_2(\Gamma), x_1+x_2=x\}.$$
Thanks to Proposition \ref{P: norma Frechet}, we have:
\begin{enumerate}
	\item $\|x\|_M\leq\|x\|_\infty\leq \sqrt{1+\frac{1}{M}}\,\|x\|_M$;
	\item the dual norm of $\|\cdot\|_M$ is given by:
	$$\textstyle {\|f\|}_M^*=\sqrt{\|f\|^2_1+\frac1M\|f\|^2_2}\ \ \ \ \ \ \ (f\in \ell_1(\Gamma)).$$
	\item $\|\cdot\|_M$ is Fr\'echet smooth (since its dual norm is LUR; this is quite standard);
	\item $\|\cdot\|_M$ is a lattice norm. 
\end{enumerate}
We will use $B_M$ to denote the closed unit ball of $(c_0(\Gamma), \|\cdot\|_M)$, and $B_{c_0(\Gamma)}$ to denote the one of $(c_0(\Gamma),\|\cdot\|_\infty)$. 
Observe that (i) is equivalent to 
 $$\textstyle{B_{c_0(\Gamma)}\subset B_M \subset \sqrt{1+\frac{1}{M}}\,B_{c_0(\Gamma)}}.$$
Let $\{e_\gamma\}_{\gamma\in\Gamma}$ be the canonical basis of $c_0(\Gamma)$ and,
for each finite set $\Gamma_0\subset \Gamma$, let us define
$$\textstyle Y_{\Gamma_0}={\mathrm{span}}\{e_\gamma;\, \gamma\in\Gamma_0\}\ \ \ \text{\and}\ \ \ Z_{\Gamma_0}=\overline{\mathrm{span}}\{e_\gamma;\, \gamma\in\Gamma\setminus\Gamma_0\}.$$
We denote by $P_{\Gamma_0}$ the canonical projection of $c_0(\Gamma)$ onto $Y_{\Gamma_0}$. Moreover, for $x\in c_0(\Gamma)$, we denote by $\mathrm{supp}(x)$ the support of $x$.

Let us start by quantifying how much flat is the norm $\|\cdot\|_M$. 

\begin{proposition}\label{P: bollapiatta}
	Suppose that $x\in c_0(\Gamma)$ is such that $\|x\|_M=1$ and let $y\in c_0(\Gamma)$ be such that:
	\begin{enumerate}
		\item[(a)] $\|y\|_\infty\leq1-\sqrt\frac2M\,$;
		\item[(b)] $\supp (x)\cap\supp(y)=\emptyset$. 
	\end{enumerate}
	Then $\|x+y\|_M=1$.
\end{proposition}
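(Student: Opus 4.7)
The strategy is to prove $\|x+y\|_M \ge 1$ and $\|x+y\|_M \le 1$ separately.

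For the lower bound, I would pick $f \in \ell_1(\Gamma)$ with $\|f\|_M^* = 1$ and $f(x) = 1$ by Hahn--Banach. Since both $\|\cdot\|_1$ and $\|\cdot\|_2$ are lattice norms, so is $\|\cdot\|_M^*$; hence the coordinatewise restriction $f' := f\cdot \mathbf{1}_{\supp x}$ satisfies $\|f'\|_M^* \le \|f\|_M^* = 1$, $f'(x) = f(x) = 1$, and $f'(y) = 0$ by hypothesis (b). Therefore $\|x+y\|_M \ge f'(x+y) = 1$.

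For the upper bound, my plan is to exhibit a decomposition $x+y = u_1 + u_2$ with $\|u_1\|_\infty^2 + M\|u_2\|_2^2 \le 1$. Parameterizing decompositions of $x$ as $x_1 = x + h$, $x_2 = -h$ with $h \in \ell_2(\Gamma)$, the functional $F(h) := \|x+h\|_\infty^2 + M\|h\|_2^2$ is convex, coercive, and weakly lower semicontinuous on the reflexive space $\ell_2(\Gamma)$ (the map $h \mapsto \|x+h\|_\infty$ is $1$-Lipschitz from $\ell_2$ to $\R$, hence weakly lsc as a continuous convex function). Its infimum, equal to $\|x\|_M^2 = 1$, is therefore attained at some $(x_1, x_2)$, and a coordinatewise truncation to $\supp x$ only decreases $\|x_1\|_\infty$ and $\|x_2\|_2$, so I may assume $\supp x_i \subseteq \supp x$. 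Taking $u_1 := x_1 + y$ and $u_2 := x_2$, the disjointness of $\supp x_1$ and $\supp y$ gives $\|u_1\|_\infty = \max(\|x_1\|_\infty, \|y\|_\infty)$, so it will suffice to show $\|x_1\|_\infty \ge \|y\|_\infty$.

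The crux is therefore the lower bound $\|x_1\|_\infty \ge \sqrt{M/(M+1)}$. Since $x \ne 0$ forces $x_1 \ne 0$, the subdifferential Fermat rule applied to $F$ at its minimum yields $M x_2 = \|x_1\|_\infty f$ for some $f \in \partial\|\cdot\|_\infty(x_1)$, i.e., $f \in \ell_1(\Gamma)$ with $\|f\|_1 = 1$ and $f(x_1) = \|x_1\|_\infty$. Substituting $\|x_2\|_2^2 = \|x_1\|_\infty^2 \|f\|_2^2/M^2$ into $\|x_1\|_\infty^2 + M\|x_2\|_2^2 = 1$ yields $\|x_1\|_\infty^2 = M/(M + \|f\|_2^2) \ge M/(M+1)$, since $\|f\|_2^2 \le \|f\|_1 \|f\|_\infty \le 1$. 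For $M > 2$, the inequality $\sqrt{M/(M+1)} \ge 1 - \sqrt{2/M}$ reduces after squaring to $2\sqrt{2M} \ge (3M+2)/(M+1)$, which follows from $2\sqrt{2M} \ge 4 > 3 > (3M+2)/(M+1)$. Together with (a), this gives $\|x_1\|_\infty \ge \|y\|_\infty$, whence $\|u_1\|_\infty = \|x_1\|_\infty$ and $\|u_1\|_\infty^2 + M\|u_2\|_2^2 = 1$, completing the upper bound. The main obstacle is the infinite-dimensional subdifferential calculus at the minimum: Fermat's rule relies on the chain rule $\partial(\|\cdot\|_\infty^2) = 2\|\cdot\|_\infty\,\partial\|\cdot\|_\infty$ and the standard characterization of $\partial\|\cdot\|_\infty$ on $c_0(\Gamma)$.
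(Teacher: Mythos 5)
Your argument is correct, but it takes a considerably more elaborate route than the paper's own proof, and the extra machinery is not needed. For the upper bound, the paper works directly with an approximately optimal decomposition: pick $\varepsilon\in(0,1)$ and $x=x_1+x_2$ with $\supp x_1\subset\supp x$ and $\|x_1\|_\infty^2+M\|x_2\|_2^2\le1+\varepsilon$. Then $\|x_2\|_2^2\le(1+\varepsilon)/M<2/M$, so $\|x_2\|_\infty\le\|x_2\|_2<\sqrt{2/M}$, whence $\|x_1\|_\infty\ge\|x\|_\infty-\|x_2\|_\infty\ge1-\sqrt{2/M}\ge\|y\|_\infty$. Disjointness of supports then gives $\|x_1+y\|_\infty=\|x_1\|_\infty$, and $\|x+y\|_M^2\le\|x_1+y\|_\infty^2+M\|x_2\|_2^2\le1+\varepsilon$. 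Letting $\varepsilon\to0$ gives the upper bound. This bypasses entirely the attainment of the infimum (reflexivity of $\ell_2(\Gamma)$, weak lower semicontinuity, coercivity), Fermat's rule, and the chain rule for subdifferentials, which you invoke to produce the relation $Mx_2=\|x_1\|_\infty f$ and the sharper estimate $\|x_1\|_\infty\ge\sqrt{M/(M+1)}$. The sharper estimate is genuine but is never exploited; the elementary bound $1-\sqrt{2/M}$ already suffices, and obtaining it costs nothing. For the lower bound, you dualize (choose a norming functional $f$ for $x$, truncate it to $\supp x$ using that $\|\cdot\|_M^*$ is a lattice norm), whereas the paper works on the primal side: since $\supp x\cap\supp y=\emptyset$, $|x|\le|x+y|$, and $\|\cdot\|_M$ is a lattice norm, so $\|x+y\|_M\ge\|x\|_M=1$. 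Both are valid; the primal argument is shorter. In short, your proof is sound and teaches a stronger quantitative fact about the optimal decomposition, but the paper's proof is significantly more economical and avoids any appeal to existence of minimizers or convex calculus.
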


\begin{proof}
	Let $\epsilon\in(0,1)$ and let $x_1\in c_0(\Gamma)$ and $x_2\in\ell_2(\Gamma)$ be such that $x_1+x_2=x$, $\mathrm{supp}(x_1)\subset \mathrm{supp}(x)$ and $\|x_1\|_\infty^2+M\|x_2\|_2^2\leq1+\epsilon$. Observe that $\|x_2\|_2^2\leq\frac{1+\epsilon}M\leq\frac2M$ and hence that $\|x_2\|_\infty\leq\|x_2\|_2<\sqrt\frac2M\,$. Hence
	$$\textstyle \|x_1\|_\infty=\|x-x_2\|_\infty\geq\|x\|_\infty-\|x_2\|_\infty\geq1-\sqrt\frac2M\,.$$
	 By (a) and (b), it follows that $\|x_1+y\|_\infty=\|x_1\|_\infty$. Since $x+y=(x_1+y)+x_2$, we have that
	$$\|x+y\|_{M}^{2}\leq\|x_1+y\|^2_\infty+M\|x_2\|^2_2=\|x_1\|^2_\infty+M\|x_2\|^2_2\leq1+\epsilon.$$
	By arbitrariness of $\epsilon\in(0,1)$, we have that $\|x+y\|_M\leq1$. Moreover, by (b) and since $\|\cdot\|_M$ is a lattice norm, we clearly have $\|x+y\|_M=1$.
\end{proof}

Let $M>2$, $q\in(0,\infty)$ and $\Gamma_0\in [\Gamma]^{<\infty}$. 
Let us consider the continuous linear operator $T_{\Gamma_0,q}:c_0(\Gamma)\to c_0(\Gamma)$ given by $$\textstyle (T_{\Gamma_0,q}x)(\gamma)=\begin{cases}
\frac{x(\gamma)}q  &\text{if}\  \gamma\in\Gamma_0; \\
x(\gamma)  &\text{if}\   \gamma\in\Gamma\setminus\Gamma_0.
\end{cases}$$
Let us consider the  equivalent norm $\|\cdot\|_{M,\Gamma_0,q}$ on $c_0(\Gamma)$ given by
$\|x\|_{M,\Gamma_0,q}=\|T_{\Gamma_0,q} x\|_M$ ($x\in c_0(\Gamma)$). We observe that the mapping $T_{\Gamma_0,q}$ defines an isometry from $(c_0(\Gamma),\|\cdot\|_{M,\Gamma_0,q})$ onto $(c_0(\Gamma),\|\cdot\|_{M})$.
The following lemma easily follows by the definition of the norm $\|x\|_{M,\Gamma_0,q}$ and by Proposition~\ref{P: bollapiatta}.

\begin{lemma}\label{L: corpi}
	Let $\|\cdot\|_{M,\Gamma_0,q}$ be defined as above, and let $B_{M,\Gamma_0,q}$ be the corresponding unit ball. Then:
	\begin{enumerate}
		\item $B_{M,\Gamma_0,q}$ is a Fr\'echet smooth body;
		\item $q B_M\cap Y_{\Gamma_0}=B_{M,{\Gamma_0},q}\cap Y_{\Gamma_0}$;
		\item if $\Gamma_0\subset\Gamma_1\subset\Gamma$, $x\in B_{M,\Gamma_0,q}\cap Y_{\Gamma_1}$, and $y\in(1-\sqrt\frac2M\,)B_{c_0(\Gamma)}\cap Z_{\Gamma_1}$, then $x+y\in B_{M,\Gamma_0,q}$;
		\item $\|\cdot\|_{M,\Gamma_0,q}$ is a lattice norm;
		\item $B_{M,\Gamma_0,q}\subset q B_M\cap Y_{\Gamma_0}+\sqrt{1+\frac1M}\,B_{c_0(\Gamma)}\cap Z_{\Gamma_0}$.
	\end{enumerate}
\end{lemma}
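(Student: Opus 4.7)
The plan is to reduce every claim to the analogous property of $\|\cdot\|_{M}$ through the isometric isomorphism $T:=T_{\Gamma_0,q}$ from $(c_0(\Gamma),\|\cdot\|_{M,\Gamma_0,q})$ onto $(c_0(\Gamma),\|\cdot\|_{M})$. Since $T$ acts coordinatewise as multiplication by a strictly positive scalar, it is in particular a lattice isomorphism. Item (1) is then immediate: Fr\'echet smoothness of $\|\cdot\|_{M,\Gamma_0,q}$ is transported from Fr\'echet smoothness of $\|\cdot\|_M$ (property (iii) listed above), and item (4) follows because $T$ preserves the coordinatewise order and $\|\cdot\|_M$ is a lattice norm (property (iv)).

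For item (2), for $x\in Y_{\Gamma_0}$ one has $Tx=x/q$, hence $\|x\|_{M,\Gamma_0,q}=\|x\|_M/q$; therefore $x\in B_{M,\Gamma_0,q}$ iff $\|x\|_M\le q$ iff $x\in qB_M$. For item (5), given $z\in B_{M,\Gamma_0,q}$, I decompose $z=P_{\Gamma_0}z+(I-P_{\Gamma_0})z\in Y_{\Gamma_0}+Z_{\Gamma_0}$. By the lattice property (4), $\|P_{\Gamma_0}z\|_{M,\Gamma_0,q}\le\|z\|_{M,\Gamma_0,q}\le1$, and (2) then places $P_{\Gamma_0}z$ in $qB_M\cap Y_{\Gamma_0}$. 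Since $T$ fixes each coordinate outside $\Gamma_0$,
$$\|(I-P_{\Gamma_0})z\|_\infty=\sup_{\gamma\notin\Gamma_0}|(Tz)(\gamma)|\le\|Tz\|_\infty\le\sqrt{1+\tfrac{1}{M}}\,\|Tz\|_M\le\sqrt{1+\tfrac{1}{M}},$$
by the comparison between $\|\cdot\|_M$ and $\|\cdot\|_\infty$ stated in property (i).

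The real content lies in item (3). Setting $u:=Tx$, and noting that $Ty=y$ because $\mathrm{supp}(y)\subset\Gamma\setminus\Gamma_1\subset\Gamma\setminus\Gamma_0$, the claim $\|x+y\|_{M,\Gamma_0,q}\le1$ is equivalent to $\|u+y\|_M\le1$ under the hypotheses $\|u\|_M\le1$, $\mathrm{supp}(u)\subset\Gamma_1$, $\|y\|_\infty\le 1-\sqrt{2/M}$, and $\mathrm{supp}(y)\subset\Gamma\setminus\Gamma_1$ (so the supports of $u$ and $y$ are disjoint). When $\|u\|_M=1$, Proposition \ref{P: bollapiatta} applies directly and gives $\|u+y\|_M=1$. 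The mild obstacle is the case $0<\|u\|_M<1$, where Proposition \ref{P: bollapiatta} does not apply verbatim; I would handle it by rescaling. Put $t:=1/\|u\|_M>1$ and $s:=1/t\in(0,1)$. The vector $tu$ still has support in $\Gamma_1$, and $\|tu\|_M=1$, so Proposition \ref{P: bollapiatta} yields $\|tu+y\|_M=1$. The identity $u+y=s(tu+y)+(1-s)y$ combined with $\|y\|_M\le\|y\|_\infty\le 1$ (from property (i)) then gives $\|u+y\|_M\le s\cdot 1+(1-s)\cdot 1=1$ by convexity. The case $u=0$ is trivial since then $\|y\|_{M,\Gamma_0,q}=\|y\|_M\le\|y\|_\infty<1$.
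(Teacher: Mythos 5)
Your proof is correct and follows essentially the same strategy as the paper: reduce every item to the corresponding property of $\|\cdot\|_M$ via the isometry and lattice isomorphism $T_{\Gamma_0,q}$. The one point where you diverge is in item (3) for the case $0<\|u\|_M<1$: the paper handles it by the lattice inequality $|u+y|\le|u/\|u\|_M+y|$ and applies $\|\cdot\|_M$ to both sides, whereas you rescale $u$ to the unit sphere and then interpolate by convexity between $tu+y$ and $y$; both are short and equally elementary, so this is a minor variant rather than a different route.
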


\begin{proof}
	(i) It follows from the fact that the bijection $T_{\Gamma_0,q}$ is an isometry.\\
	(ii) If $x\in Y_{\Gamma_0}$ then $T_{\Gamma_0,q}(x)=\frac{x}{q}$. Therefore we have $\|T_{\Gamma_0,q}(x)\|_{M}=\|\frac{x}{q}\|_{M}\leq 1$ if and only if $\|x\|_{M}\leq q$.\\
	(iii) We start by proving the following assertion:
	\begin{equation}\label{E: terzo item}
	\begin{split}
	&\textstyle\text{if $\Gamma_0\subset \Gamma_1\subset \Gamma$, $x\in B_{M}\cap Y_{\Gamma_1}$, and $y\in(1-\sqrt\frac2M\,)B_{c_0(\Gamma)}\cap Z_{\Gamma_1}$,}\\ &\text{then $x+y\in B_{M}$.}
	\end{split}
\end{equation}
If $x=0$, then \eqref{E: terzo item} follows by the inclusion $B_{c_0(\Gamma)}\subset B_M$.	Let $x\in (B_M\cap Y_{\Gamma_1})\setminus\{0\}$ and $y\in(1-\sqrt\frac2M\,)B_{c_0(\Gamma)}\cap Z_{\Gamma_1}$. We have $\supp (\frac{x}{\|x\|_M})\cap\supp(y)=\emptyset$, hence by Proposition \ref{P: bollapiatta}, we obtain $\|\frac{x}{\|x\|_M}+y\|_{M}=1$. Since $\|\cdot\|_{M}$ is a lattice norm and $|x+y|\leq |\frac{x}{\|x\|_M}+y|$, we have $\|x+y\|_M\leq \|\frac{x}{\|x\|_M}+y\|_M=1$, hence \eqref{E: terzo item} is proved.\\
	Let $x\in B_{M,\Gamma_0,q}\cap Y_{\Gamma_1}$ and $y\in(1-\sqrt\frac2M\,)B_{c_0}\cap Z_{\Gamma_1}$. Since $T_{\Gamma_0,q}$ is an isometry and $x\in B_{M,\Gamma_0,q}\cap Y_{\Gamma_1}$, we have $T_{\Gamma_0,q}(x)\in B_{M}\cap Y_{\Gamma_1}$. Furthermore, since $y\in Z_{\Gamma_1}$, we have $T_{\Gamma_0,q}(y)=y$. Hence applying \eqref{E: terzo item} to $T_{\Gamma_0,q}(x)$ and $y$, we have $\|x+y\|_{M,\Gamma_0,q}=\|T_{\Gamma_0,q}(x)+T_{\Gamma_0,q}(y)\|_M=\|T_{\Gamma_0,q}(x)+y\|_M\leq 1$.\\
	(iv) 
	holds since $T_{\Gamma_0,q}$ is a positive operator, $\|\cdot\|_M$ is a lattice norm, and
	$\|\cdot\|_{M,\Gamma_0,q}=\|\cdot\|_M\circ T_{\Gamma_0,q}$.
	\\
	(v) Let $x\in B_{M,\Gamma_0,q}$. We set $x_1\coloneqq P_{\Gamma_0} (x)$ and $x_2\coloneqq (I-P_{\Gamma_0})(x)$. Let us prove that $x_1\in q B_M\cap Y_{\Gamma_0}$ and $x_2\in \sqrt{1+\frac1M}\,B_{c_0(\Gamma)}\cap Z_{\Gamma_0}$. Since $x\in B_{M,\Gamma_0,q}$, the norm $\|\cdot\|_{M,\Gamma_0,q}$ is a lattice norm and $|x_1|\leq |x|$, we have $x_1\in B_{M,\Gamma_0,q}$. Therefore by (ii), we have $x_1\in  qB_M\cap Y_{\Gamma_0}$. Since $x_2\in Z_{\Gamma_0}$, we have $T_{\Gamma_0,q}(x_2)=x_2$, therefore we obtain $\|x_2\|_{M}=\|T_{\Gamma_0,q}(x_2)\|_{M}=\|x_2\|_{M,\Gamma_0,q}\leq 1$. Finally, since $\|x\|_{\infty}\leq \sqrt{1+\frac1M}\, \|x\|_M$ holds for any $x\in c_0(\Gamma)$, we have $x_2\in \sqrt{1+\frac1M}\, B_{c_0(\Gamma)}\cap Z_{\Gamma_0}$. This completes the proof.
\end{proof}

\begin{theorem}
For every infinite set $\Gamma$, the space $c_0(\Gamma)$ admits a star-finite covering $\mathcal B$ by Fr\'echet smooth centrally symmetric bounded bodies.
\end{theorem}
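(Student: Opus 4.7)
The plan is to assemble the covering from one central body plus an inductive family organised by ``shells'' (layers in the $c_0$-norm) and, within each shell, by the finite subsets $\Gamma_0\in[\Gamma]^{<\infty}$ that carry the ``large'' coordinates of the points. The essential tool will be the cylinder property of Lemma~\ref{L: corpi}(iii): the ball $B_{M,\Gamma_0,q}$ contains $(B_{M,\Gamma_0,q}\cap Y_{\Gamma_0})+\rho B_{c_0(\Gamma)}\cap Z_{\Gamma_0}$, with $\rho:=1-\sqrt{2/M}$. Thus any translate $c+B_{M,\Gamma_0,q}$ with $c\in Y_{\Gamma_0}$ behaves as an infinite-dimensional cylinder of vertical width $\rho$ (in the $c_0$-norm) over a finite-dimensional cross-section in $Y_{\Gamma_0}$; covering a bounded slab of the form $C+\rho' B_{c_0(\Gamma)}\cap Z_{\Gamma_0}$ with $C\subset Y_{\Gamma_0}$ bounded and $\rho'<\rho$ thus reduces to a finite-dimensional covering of $C$ inside $Y_{\Gamma_0}$.

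I would fix $M$ large enough that $\rho>1/2$, take $B_M$ as the level-$0$ body (it already contains $B_{c_0(\Gamma)}$), and for each $n\ge1$ introduce the shell $S_n:=\{x\in c_0(\Gamma):n\rho<\|x\|_\infty\le(n+1)\rho\}$. Since each $x\in c_0(\Gamma)$ has only finitely many coordinates exceeding any positive threshold, the ``large-coordinate set'' $\Gamma_0(x):=\{\gamma\in\Gamma:|x(\gamma)|>n\rho-\rho/2\}$ is always finite; partition $S_n$ into classes $V_{n,\Gamma_0}$ accordingly. By construction, $V_{n,\Gamma_0}$ sits inside a slab of the form $A_{n,\Gamma_0}+(n\rho-\rho/2) B_{c_0(\Gamma)}\cap Z_{\Gamma_0}$, where $A_{n,\Gamma_0}=P_{\Gamma_0}(V_{n,\Gamma_0})\subset Y_{\Gamma_0}$ is bounded in a finite-dimensional space. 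Choose $q_n$ so that the rescaled cylinder width available in $B_{M,\Gamma_0,q_n}$ comfortably dominates $n\rho-\rho/2$, and cover the bounded set $A_{n,\Gamma_0}$ by finitely many translates of $B_{M,\Gamma_0,q_n}\cap Y_{\Gamma_0}$ with centres in $Y_{\Gamma_0}$ (possible by total boundedness in the finite-dimensional $Y_{\Gamma_0}$). Applying Lemma~\ref{L: corpi}(iii) lifts this finite-dimensional covering to a covering of $V_{n,\Gamma_0}$ by finitely many bodies at label $(n,\Gamma_0)$, each Fr\'echet smooth and centrally symmetric by Lemma~\ref{L: corpi}(i),(iv). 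Taking the union over all $(n,\Gamma_0)$ and adding $B_M$ yields the desired family $\mathcal B$.

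For star-finiteness, fix a body $B$ with label $(n,\Gamma_0)$. A body $B'$ with label $(n',\Gamma_0')$ intersects $B$ only if the shells are close, $|n-n'|\le 1$, and if some common point $x$ lies simultaneously in a bounded $Y_{\Gamma_0}$-cross-section extended by the cylinder width in $Z_{\Gamma_0}$ \emph{and} in the analogous $(Y_{\Gamma_0'},Z_{\Gamma_0'})$-description of $B'$. Comparing the two descriptions and using that the class definition puts a strict upper bound on $|x(\gamma)|$ for $\gamma$ outside the label, I expect to force $\Gamma_0\triangle\Gamma_0'$ to lie inside a \emph{finite} set determined by $(n,\Gamma_0)$; since the collection of bodies placed at any single label is finite, this yields that $B$ meets only finitely many other bodies.

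The main technical obstacle is to coordinate the parameters $M$, the shell thresholds, the cutoff $n\rho-\rho/2$ in the definition of $\Gamma_0(x)$, and the sequence $(q_n)$ so that simultaneously: (a) the cylinder extension of Lemma~\ref{L: corpi}(iii) fully absorbs the $Z_{\Gamma_0}$-tail of each $V_{n,\Gamma_0}$, so that the induced finite-dimensional covering is genuinely lifted; (b) no point of $c_0(\Gamma)$ escapes between adjacent shells or between adjacent support labels; and (c) the number of admissible labels $(n',\Gamma_0')$ compatible with a nontrivial intersection is finite. The delicate point is the behaviour at the boundary $\|x\|_\infty=n\rho$ between consecutive shells, where the support label $\Gamma_0(x)$ can change; a slightly more refined definition of the classes, perhaps with an overlap parameter, may be needed to keep both the covering property and the star-finite count under control simultaneously.
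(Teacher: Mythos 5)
Your organizing scheme (shells in $\|\cdot\|_\infty$, then partition by the ``large-coordinate set'' $\Gamma_0(x)$) is genuinely different from the paper's, and unfortunately it contains a gap that no choice of parameters can repair. The crucial misreading is of Lemma~\ref{L: corpi}(iii): the cylinder width there is $1-\sqrt{2/M}$, and it does \emph{not} depend on $q$. The parameter $q$ only rescales the cross-section in $Y_{\Gamma_0}$ (indeed $B_{M,\Gamma_0,q}\cap Y_{\Gamma_0}=q B_M\cap Y_{\Gamma_0}$ by item (ii)); the extent of $B_{M,\Gamma_0,q}$ in the $Z_{\Gamma_0}$-directions is a constant, bounded above by $\sqrt{1+1/M}<2$ by item (v), with the ``flat'' part of width $1-\sqrt{2/M}<1$. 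So ``choose $q_n$ so that the rescaled cylinder width comfortably dominates $n\rho-\rho/2$'' cannot be done: the thickness $n\rho-\rho/2$ of your slab $A_{n,\Gamma_0}+(n\rho-\rho/2)B_{c_0(\Gamma)}\cap Z_{\Gamma_0}$ grows without bound in $n$, while the available cylinder width stays below $1$. If you try instead to inflate the whole body by a factor $\lambda>n\rho-\rho/2$, then by Lemma~\ref{L: corpi}(v) its $c_0$-diameter is comparable to $\lambda$, so the bodies at shell $n$ span roughly $n$ shells, and the ``only adjacent shells can intersect'' part of your star-finiteness argument collapses as well. The structural difficulty is that a point in shell $n$ with label $\Gamma_0$ can have finitely many $Z_{\Gamma_0}$-coordinates that are large (up to $n\rho-\rho/2$), and bodies with cross-section in $Y_{\Gamma_0}$ simply cannot reach them.

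The paper circumvents this by inducting on $n=|\Gamma_0|$ rather than on shells. At stage $n$ it covers, for each $n$-element $\Gamma_0$, the set $Y_{\Gamma_0}\setminus C^{n-1}$ using Lemma~\ref{L: totallybounded} inside the finite-dimensional $Y_{\Gamma_0}$, then lifts via bodies of the form $x_k+\theta_n B_{M_n,\Gamma_0,q_k}$; their cylinder width $\theta_n(1-\sqrt{2/M_n})$ is \emph{small}, not large, because points with a fat $Z_{\Gamma_0}$-tail are not required to be covered at stage $n$ — they get covered at a deeper stage once more of their support is peeled off. The invariant $(\mathrm P^4_n)$ keeps $C^n$ a cylinder of width $\theta_n(1-\sqrt{2/M_n})$ over $Y_{\Gamma_1}\cap C^n$ for every finite $\Gamma_1$ with $|\Gamma_1|\geq n$, and the hypothesis $\theta=\prod_i(1-\sqrt{2/M_i})\alpha_i/\sqrt{1+M_i^{-1}}>0$ guarantees a uniform lower bound on these widths, which is what ultimately makes the union cover all of $c_0(\Gamma)$: for $x\in c_0(\Gamma)$, $\Gamma_0:=\{\gamma:|x(\gamma)|\geq\theta\}$ is finite and $x\in Y_{\Gamma_0}+\theta B_{c_0(\Gamma)}\subset C^{|\Gamma_0|}$. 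Star-finiteness across different $\Gamma_0$'s in the same $[\Gamma]^n$ then comes from a quantitative separation claim (using $(\mathrm P^5_{n-1})$), not from a shell constraint. Reorganizing your construction along these lines is essentially the paper's proof; as it stands, your version does not work.
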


\begin{proof}
Let us consider sequences $\{M_n\}_{n=0}^\infty\subset(2,\infty)$ and $\{\alpha_n\}_{n=0}^{\infty}\subset (0,1)$ such that $$\textstyle \theta:=\prod_{i=0}^{\infty}(1-\sqrt\frac2{M_i})\frac{\alpha_i}{\sqrt{1+M_i^{-1}}}>0.$$
Put $\theta_0=1$ and for each $n\in\N$ define 
$$\textstyle \theta_n:=\prod_{i=0}^{n-1}(1-\sqrt\frac2{M_i}\,)\,\prod_{j=1}^{n}\frac{\alpha_j}{\sqrt{1+M_j^{-1}}}\,.$$
We shall inductively construct families $\mathcal B_n$ ($n\in\N_0$) of bodies
such that:
\begin{enumerate}
	\item[($\mathrm P^1_n$)] if $B\in\mathcal B_n$ and $C\in\bigcup_{k<n}\mathcal B_k$, then $B\cap C=\emptyset$;
	\item[$(\mathrm P^2_n)$] $\mathcal B_n$ is star-finite;
	\item[($\mathrm P^3_n$)]
	$C^n:=\bigcup_{B\in \mathcal B_0\cup\ldots\cup \mathcal B_n} B$ is closed;
	\item[($\mathrm P^4_n$)]
	 for each $\Gamma_1\subset\Gamma$ such that $|\Gamma_1|\geq n$, we have
	$$\textstyle Y_{\Gamma_1}\cap C^n+\theta_n(1-\sqrt\frac2{M_n})B_{c_0(\Gamma)}\cap Z_{\Gamma_1}\subset C^n\subset Y_{\Gamma_1}\cap C^n+Z_{\Gamma_1};$$
    \item[($\mathrm P^5_n$)]
    for each $\Gamma_0\subset \Gamma$, with $|\Gamma_0|=n$, we have $$\textstyle Y_{\Gamma_0}+\theta_n \left( 1-\sqrt{\frac{2}{M_n}}\right)B_{c_0(\Gamma)}\subset C^n.$$
\end{enumerate} 
 Let us show that this is possible. 
 We put $\mathcal B_0=\{B_{M_0}\}$ and claim that the above conditions hold for $n=0$.
 Indeed, conditions ($\mathrm P^1_0$) and ($\mathrm P^2_0$)  are trivially true, while observing that $B_{c_0(\Gamma)}\subset B_{M_0}=C^0$, we obtain ($\mathrm P^3_0$) and ($\mathrm P^5_0$). In order to prove condition ($\mathrm P^4_0$), we verify both inclusions. By (iii) in Lemma \ref{L: corpi}, we have $Y_{\Gamma_1}\cap B_{M_0}+(1-\sqrt{\frac{2}{M_0}}\,)B_{c_0(\Gamma)}\cap Z_{\Gamma_1}\subset B_{M_0}$ for any $\Gamma_1\subset \Gamma$ such that $|\Gamma_1|\geq 0$. On the other hand, let $x\in B_{M_0}$ and $\Gamma_1\subset \Gamma$ such that $|\Gamma_1|\geq 0$. Since $P_{\Gamma_1}(x)\in Y_{\Gamma_1}$ and $|P_{\Gamma_1}(x)|\leq |x|$, we have $\|P_{\Gamma_1}(x)\|_{M_0}\leq\|x\|_{M_0}\leq 1$. Therefore it follows that $B_{M_0}\subset Y_{\Gamma_1}\cap B_{M_0}+Z_{\Gamma_1}$.  Hence ($\mathrm P^4_0$)  is established.
 
Let $n\in\N$ and suppose we have already defined  $\mathcal B_0,\ldots,\mathcal B_{n-1}$ 
 such that conditions ($\mathrm P^3_{n-1}$), ($\mathrm P^4_{n-1}$) and ($\mathrm P^5_{n-1}$) hold. Let $\Gamma_0\in [\Gamma]^n$.  We have that the set $C^{n-1}\cap Y_{\Gamma_0}$ is a closed subset of $Y_{\Gamma_0}$. By Lemma~\ref{L: totallybounded},  there exist sequences $\{x_k\}_k\subset Y_{\Gamma_0}$, and $\{\tilde q_k\}_k\subset(0,\infty)$ such that:
 \begin{enumerate}
 	\item[(a)] the family $\{x_k+\tilde q_k B_{M_n}\cap Y_{\Gamma_0}\}_k$ is star-finite;
 	\item[(b)] $\bigcup_{k}(x_k+\tilde q_k B_{M_n}\cap Y_{\Gamma_0})=Y_{\Gamma_0}\setminus C^{n-1}$;
 	\item[(c)] the singular points of $\{x_k+\tilde q_k B_{M_n}\cap Y_{\Gamma_0}\}_k$ are contained in $C^{n-1}\cap Y_{\Gamma_0}$. 
 \end{enumerate} 
Now, for each $k\in\N$, define  $q_k=\frac{\tilde q_k}{\theta_{n} }$ and put $\mathcal B_{\Gamma_0}=\{B_k\}_k$, where $B_k\coloneqq x_k+\theta_{n}  B_{M_n,\Gamma_0,q_k}$. Observe that, for each $k\in\N$, 
 \begin{equation}\label{E: uguaglianza}
 B_k\cap Y_{\Gamma_0}=x_k + \theta_{n} B_{M_n,\Gamma_0,q_k}\cap Y_{\Gamma_0}=x_k + \tilde q_k B_{M_n}\cap Y_{\Gamma_0}
 \end{equation}
 holds. Moreover, by (v) in Lemma \ref{L: corpi}, we have
  \begin{equation}\label{E: contenuto}
 \textstyle x_k + \theta_n B_{M_n,\Gamma_0,q_k}\subset x_k + \tilde q_k B_{M_n}\cap Y_{\Gamma_0} + \theta_n\sqrt{1+\frac{1}{M_n}}\,B_{c_0(\Gamma)}\cap Z_{\Gamma_0}
 \end{equation}
 for each $k\in\N$. Now, we are going to prove that the family $\B_{\Gamma_0}$ satisfies the following conditions:
\begin{enumerate}
	\item[(a')] the family $\mathcal B_{\Gamma_0}$ is star-finite;
	\item[(b')] $\bigcup_{B\in \mathcal B_{\Gamma_0}}B\cap Y_{\Gamma_0}=Y_{\Gamma_0}\setminus C^{n-1}$;
	\item[(c')] the singular points of $\mathcal B_{\Gamma_0}$ are contained in 
	$$\textstyle C^{n-1}\cap Y_{\Gamma_0}+\sqrt{1+M_n^{-1}}\,\theta_{n} B_{c_0(\Gamma)}\cap Z_{\Gamma_0}.$$ 
\end{enumerate}

If $\B_{\Gamma_0}$ is not star-finite, then there exists a subfamily $\{B_{k_j}\}_{j\in \N}\subset \B_{\Gamma_0}$ such that $B_{k_1}\cap B_{k_j}\neq \emptyset$, for each $j\in \N$. Let $y_j\in B_{k_1}\cap B_{k_j}$, for each $j\in \N$. By \eqref{E: contenuto}, for each $j\in \N$, we have  $$P_{\Gamma_0}(y_{j})\in [x_{k_j}+\tilde q_{k_j} B_{M_n}\cap Y_{\Gamma_{0}}]\cap[ x_{k_1}+\tilde q_{k_1} B_{M_n}\cap Y_{\Gamma_0}],$$  which contradicts $(a)$. Hence $(a')$ is proved.

$(b')$ follows combining \eqref{E: uguaglianza} with (b).

Let $x\in c_0(\Gamma)$ be a singular point for $\B_{\Gamma_0}$. Then $P_{\Gamma_0}(x)$ is a singular point for the family $\{x_k+\tilde q_k B_{M_n}\cap Y_{\Gamma_0}\}_k$, hence by (c), we have $P_{\Gamma_0}(x)\in C^{n-1}\cap Y_{\Gamma_0}$. Moreover we have $\|(I-P_{\Gamma_0})(x)\|_{\infty}\leq \sqrt{1+M_n^{-1}}\,\theta_{n}$ since otherwise, by \eqref{E: contenuto}, there would exist $\varepsilon>0$ for which $(x+ \varepsilon B_{c_0(\Gamma)})\cap B=\emptyset$ for each $B\in \B_{\Gamma_0}$, contradicting the fact that $x$ is singular. Therefore (c') is established.

Now, let us denote  
$$\textstyle
\B_n\coloneqq \bigcup_{\Gamma_0\in [\Gamma]^n}\B_{\Gamma_0}\,, 
\quad D^{n}_{\Gamma_0}:=\bigcup_{B\in \mathcal B_{\Gamma_0}}B
\quad\text{and}\quad D^n:=\bigcup_{\Gamma_0\in[\Gamma]^n}D_{\Gamma_0}^n. 
$$
\textbf{Claim:} there exists $\beta_n>0$ such that for every $B_0\in\B_{\Delta_0}$ and $B_1\in\B_{\Delta_1}$ with $\Delta_0,\Delta_1\in [\Gamma]^n$, $\Delta_0\neq \Delta_1$,  we have $\dist(B_0,B_1)\geq \beta_n$, where the distance  refers to the supremum norm.

In order to prove the claim, let $\Delta_0, \Delta_1\in [\Gamma]^n$ such that $\Delta_0\neq \Delta_1$ and $B_0\in \B_{\Delta_0}$, $B_1\in \B_{\Delta_1}$. Since $\Delta_0$ and $\Delta_1$ are different and they have the same cardinality, there exists $\gamma_0\in \Delta_0\setminus \Delta_1$. We observe that
$$B_0 \subset Y_{\Delta_0}\cap B_0 + Z_{\Delta_0}\subset Y_{\Delta_0}\cap B_0 + Z_{\{\gamma_0\}}. $$
Hence we have:
\begin{equation}\label{E: claim1}
\begin{split}
\dist(B_0,Y_{\Delta_1})&\geq \dist(Y_{\Delta_0}\cap B_0+Z_{\{\gamma_0\}},Y_{\Delta_1})\\
&=\inf\{\|x_0+z_0-y\|_{\infty}:x_0\in Y_{\Delta_0}\cap B_0,z_0\in Z_{\{\gamma_0\}},y\in Y_{\Delta_1}\}\\
&=\inf\{\|x_0+z_0\|_{\infty}:x_0\in Y_{\Delta_0}\cap B_0,z_0\in Z_{\{\gamma_0\}}\}\\
&\geq \inf\{|(x_0+z_0)(\gamma_0)|:x_0\in Y_{\Delta_0}\cap B_0,z_0\in Z_{\{\gamma_0\}}\}\\
&=\inf\{|x_0(\gamma_0)|:x_0\in Y_{\Delta_0}\cap B_0\}\\
&\geq \theta_{n-1}\left(\textstyle{1-\sqrt{\frac{2}{M_{n-1}}}}\right),
\end{split}
\end{equation}
where in the last inequality we have used property ($\mathrm{P_{n-1}^{5}}$) with $\Delta_0\setminus\{\gamma_0\}$. Moreover, by \eqref{E: contenuto} we have 
\begin{equation}\label{E: claim2}
\textstyle{B_1 \subset Y_{\Delta_1} + \theta_{n}\sqrt{1+\frac{1}{M_n}}B_{c_0(\Gamma)}\cap Z_{\Delta_1}}
\end{equation}
Hence, combining \eqref{E: claim1} with \eqref{E: claim2} we obtain

\begin{eqnarray*}
\dist(B_0,B_1)&\geq & \textstyle{\theta_{n-1} \left(1- \sqrt{\frac{2}{M_{n-1}}}\right) - \theta_n\sqrt{1+\frac{1}{M_n}}}\\
&=&\textstyle{\theta_{n-1} \left(1 - \sqrt{\frac{2}{M_{n-1}}}\right)(1-\alpha_n) >0.}
\end{eqnarray*}
Letting $\beta_n=\theta_{n-1} \left(1 - \sqrt{\frac{2}{M_{n-1}}}\right)(1-\alpha_n) >0$ we obtain the claim.

Let us prove that conditions ($\mathrm P^1_n$)-($\mathrm P^5_n$) hold.
\begin{itemize}
	\item In order to prove that condition ($\mathrm P^1_n$) holds, we can equivalently prove that the sets $C^{n-1}$ and $D^{n}_{\Gamma_0}$ are disjoint for each $\Gamma_0\in[\Gamma]^n$. Let $\Gamma_0 \in[\Gamma]^n$, $B\in\B_{\Gamma_0}$ and $x\in B$. By ($\mathrm P^4_{n-1}$) we have $C^{n-1}\subset Y_{\Gamma_0}\cap C^{n-1}+Z_{\Gamma_0}$. Therefore, suppose by contradiction that $x\in C^{n-1}$, then we would have $P_{\Gamma_0}(x)\in Y_{\Gamma_0}\cap C^{n-1}$. Which is not possible, indeed, by (b'), we have $P_{\Gamma_0}(x)\in B\cap Y_{\Gamma_0}\subset Y_{\Gamma_0}\setminus C^{n-1}$. 
	\item ($\mathrm P^2_{n}$) follows combining (a') with our claim.
	\item Let $\{z_{k}\}_{k\in\N}\subset D^n$ be such that $z_k\to z$. If there exists $B\in \B_n$ such that $z_k\in B$ for infinitely many $k\in \N$, by closedness of $B$, we have $z\in B\subset D^n$. If, on the other hand, each $B\in \B_n$ contains finitely many elements of the sequence $\{z_k\}_{k\in \N}$, by our claim, there exists $\Gamma_0\in[\Gamma]^n$ such that $z$ is a singular point of $\mathcal B_{\Gamma_0}$. By (c'), ($\mathrm P^4_{n-1}$) and the definition of $\theta_n$, we have
	$$\textstyle z\in C^{n-1}\cap Y_{\Gamma_0}+\theta_{n-1}(1-\sqrt\frac2{M_{n-1}}\,) B_{c_0(\Gamma)}\cap Z_{\Gamma_0}\subset C^{n-1}.$$
	In any case, the closure of the set $D^n$ is contained in $C^n=C^{n-1}\cup D^n$. Since, by ($\mathrm P^3_{n-1}$),  $C^{n-1}$ is closed, condition ($\mathrm P^3_{n}$) holds. 
	\item Since $\theta_n(1-\sqrt\frac2{M_n}\,)<\theta_{n-1}(1-\sqrt\frac2{M_{n-1}}\,)$ and since ($\mathrm P^4_{n-1}$) holds,
	in order to prove condition ($\mathrm P^4_{n}$), it suffices to show that, for each $\Gamma_1\subset \Gamma$ such that $|\Gamma_1|\geq n$, we have that 
	\begin{equation}\label{E: bollepiatte}
	\textstyle Y_{\Gamma_1}\cap D^{n}_{\Gamma_0}+\theta_n(1-\sqrt\frac2{M_n}\,)B_{c_0(\Gamma)}\cap Z_{\Gamma_1}\subset D^{n}_{\Gamma_0}\subset Y_{\Gamma_1}\cap D^{n}_{\Gamma_0}+Z_{\Gamma_1}
	\end{equation}
	for each $\Gamma_0\in[\Gamma]^n$. It is easy to see that \eqref{E: bollepiatte} follows by the definition of $\mathcal B_{\Gamma_0}$ and Lemma~\ref{L: corpi}, (iii) and (iv).
	\item By (b') we have $Y_{\Gamma_0} \subset C^{n}$. Since ($\mathrm P^4_{n}$) holds we have
	$$
	\textstyle Y_{\Gamma_0}\cap C^n+\theta_n(1-\sqrt\frac2{M_n}\,)B_{c_0(\Gamma)}\cap Z_{\Gamma_0}\subset C^n.$$
	Hence we obtain ($\mathrm P^5_{n}$).
\end{itemize} 
\smallskip

To complete the proof, let us consider the family $\mathcal B:=\bigcup_{n\in\N} \mathcal B_n$. By ($\mathrm P^1_{n}$) and ($\mathrm P^2_{n}$), $\mathcal B$ is clearly star-finite. Moreover, for each $n\geq 0$ and each $\Gamma_0\in[\Gamma]^{n}$, by condition ($\mathrm P^5_{n}$) we have that:
$$\textstyle Y_{\Gamma_0}+\theta B_{c_0(\Gamma)}\subset Y_{\Gamma_0}+\theta_n\left(1-\sqrt\frac2{M_n}\right)B_{c_0(\Gamma)}\subset C^n.$$
By arbitrariness of $n\geq 0$ and $\Gamma_0\in[\Gamma]^{n}$ (and since $\theta>0$), $\mathcal B$ is a covering of $c_0(\Gamma)$. The fact that the elements of $\mathcal B$ are Fr\'echet smooth centrally symmetric bounded bodies follows by our construction and Lemma~\ref{L: corpi}.
\end{proof}

\section{Appendix}\label{section:appendix}

In what follows, $(X,\|\cdot\|)$ and $(Y,|\cdot|)$ are Banach spaces whose dual norms will be denoted
by $\|\cdot\|_*$ and $|\cdot|_*$, respectively.

Given an arbitrary function $f\colon X\to(-\infty,+\infty]$ which is {\em proper}, that is, finite in at least one point, one can define its {\em Fenchel conjugate} $f^*\colon X^*\to (-\infty,+\infty]$ by 
$$
f^*(x^*)=\sup_{x\in X}\{x^*(x)-f(x)\}.
$$
Let us collect some useful properties, which are more or less known. 

\begin{lemma}\label{L: Fenchel}
Let $X,Y$ be as above, $f\colon X\to(-\infty,+\infty]$, $g\colon Y\to(-\infty,+\infty]$.
Let $\mathcal{C}$ denote the set of all convex, proper, lower semicontinuous functions on $X$
with values in $(-\infty,+\infty]$, and $\mathcal{C}^*$ the set of all convex, proper, weak$^*$-lower semicontinuous functions on $X^*$ with values in $(-\infty,+\infty]$. 
\begin{enumerate}[(a)]
\item $f^*$ is convex and weak$^*$-lower semicontinuous.
\item $f^*$ is proper if and only if $f\ge a$ for some continuous affine $a\colon X\to\R$. 
\item For any $\alpha>0$, $(\alpha f)^*(x^*)=\alpha\, f^*(x^*/\alpha)$, $x^*\in X^*$.
\item $(\|\cdot\|^2)^*=(1/4)\|\cdot\|_*^2$.
\item Let $T\colon Y\to X$ be a bounded linear operator, and assume that
$$
h(x):=\inf\bigl\{f(u)+g(y): u\in X, y\in Y, x=u+Ty\bigr\}>-\infty,\quad x\in X.
$$
Then the function $h$ is proper, and its Fenchel conjugate is $h^*=f^*+g^*\circ T^*$.
\item The Fenchel conjugation $\phi\mapsto \phi^*$ gives a bijection between $\mathcal{C}$ and $\mathcal{C}^*$.
\end{enumerate}
\end{lemma}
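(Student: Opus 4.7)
The plan is to verify (a)--(f) directly from the definition of the Fenchel conjugate, using only standard convex-analytic manipulations and, for the last item, the Fenchel--Moreau biconjugate theorem. Nothing deeper is required.

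For \emph{(a)}, I would observe that $f^*$ is the pointwise supremum, over $x$ in the effective domain of $f$, of the $w^*$-continuous affine maps $x^*\mapsto x^*(x)-f(x)$, hence convex and $w^*$-lower semicontinuous. For \emph{(b)}, one direction writes $f\ge x_0^*(\cdot)+c$ to obtain $f^*(x_0^*)\le-c<+\infty$ (and $f^*>-\infty$ everywhere because $f$ is proper); the reverse direction reads the affine minorant $x\mapsto x_0^*(x)-f^*(x_0^*)$ off the definition whenever $f^*(x_0^*)<+\infty$. Part \emph{(c)} is a rescaling. For \emph{(d)}, parametrizing $x=tu$ with $t\ge 0$ and $u\in S_X$ reduces the sup to the one-dimensional maximization of $t\|x^*\|_*-t^2$, attained at $t=\|x^*\|_*/2$ with value $\|x^*\|_*^2/4$.

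For \emph{(e)}, the core identity
\begin{align*}
h^*(x^*)&=\sup_{x\in X}\{x^*(x)-h(x)\}=\sup_{u\in X,\,y\in Y}\{x^*(u+Ty)-f(u)-g(y)\}\\
&=\sup_{u\in X}\{x^*(u)-f(u)\}+\sup_{y\in Y}\{(T^*x^*)(y)-g(y)\}=f^*(x^*)+g^*(T^*x^*)
\end{align*}
follows by unfolding the infimum defining $h$ and splitting the supremum; the only subtlety is properness of $h$, for which the hypothesis $h>-\infty$ gives one direction and picking $u_0,y_0$ in the effective domains of $f,g$ gives $h(u_0+Ty_0)\le f(u_0)+g(y_0)<+\infty$.

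For \emph{(f)}, the map $f\mapsto f^*$ sends $\mathcal C$ into $\mathcal C^*$: (a) gives the convexity and $w^*$-lower semicontinuity, while Hahn--Banach separation of the epigraph of $f$ from a point strictly below it produces an affine minorant that, via (b), forces $f^*$ to be proper. The Fenchel--Moreau theorem $f^{**}|_X=f$---proved by the analogous $w^*$-separation on $X^*$, where the separating functionals must be $w^*$-continuous and hence elements of $X$---then provides the two-sided inverse $\phi\mapsto\phi^*|_X$. I expect the main obstacle to lie in \emph{(f)}, since it is the only item using anything beyond the bare definition, and in carrying out both Hahn--Banach separations (once in the norm topology of $X$ to minorize $f$ and once in the $w^*$-topology of $X^*$ to reconstruct $f$ from its biconjugate) one must be careful that the class of separating functionals matches exactly the regularity assumed on $\phi\in\mathcal C^*$.
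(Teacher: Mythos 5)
Your proof is correct and matches the paper's approach: the calculation for (e) is verbatim the same, (a)--(c) are handled the same way (the paper calls them easy exercises), and for (f) you sketch the standard Hahn--Banach/Fenchel--Moreau argument that the paper simply cites to references. The only substantive difference is in (d), where you compute directly via the parametrization $x=tu$, $t\ge 0$, $u\in S_X$, while the paper derives the formula from the known identity $(\tfrac12\|\cdot\|^2)^*=\tfrac12\|\cdot\|_*^2$ using the rescaling rule (c).
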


\begin{proof}[Sketch of proof]
(a), (b) and (c) are easy exercises. Part (d) can be easily proved via (b) from the known equality
$(\frac12\|\cdot\|^2)^*=\frac12\|\cdot\|_*^2$ (see \cite[Example~6.1.6]{Nic-Per} for a more general fact). 
Part (f) is a well-known result (sometimes called the Fenchel-Moreau theorem); see e.g.\ \cite[Proposition~4.4.2]{Bor-Van}, 
\cite[Theorem~6.1.2]{Nic-Per} or \cite[Theorem~5.2.8]{Lucc}. 

Let us show (e). In what follows, $x,u\in X$, $y\in Y$ and $x^*\in X^*$.
\begin{align*}
h^*(x^*)&= \sup_x\bigl\{x^*(x)-\!\!\inf_{x=u+Ty}[f(u)+g(y)]\bigr\}
=\sup_{u,y}\bigl\{ x^*(u+Ty)-f(u)-g(y)\bigr\}\\
&=\sup_u\{x^*(u)-f(u)\}+\sup_y\{(T^*x^*)(y)-g(y)\}=f^*(x^*)+g^*(T^* x^*).
\end{align*}
\end{proof}

\begin{proposition}\label{P: norma Frechet}
Let $X,Y$ be as above, $M>0$, and
$T\colon Y\to X$ a bounded linear operator. For $x\in X$ define $|\!|\!|x|\!|\!|\ge0$ by the formula
$$
|\!|\!|x|\!|\!|^2:=\inf\bigl\{\|u\|^2+M|y|^2: u\in X, y\in Y, x=u+Ty\bigr\}.
$$
Then:
\begin{enumerate}[(a)]
\item $|\!|\!|\cdot|\!|\!|$ is an equivalent norm on $X$ which satisfies the estimates
$$\textstyle
|\!|\!|x|\!|\!| \le \|x\| \le \sqrt{1+\frac{\|T\|^2}{M}}\  |\!|\!|x|\!|\!|\,;
$$
\item the corresponding dual norm is given by\ \ 
$|\!|\!|x^*|\!|\!|_*^2=\|x^*\|_*^2+\frac1M |T^* x^*|_*^2\,;$
\item if moreover $X,Y$ are Banach lattices and $T$ is a positive operator then $\norma{\cdot}$
is a lattice norm.
\end{enumerate}
\end{proposition}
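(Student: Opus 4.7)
The plan is to reduce all three parts to the Fenchel calculus of Lemma~\ref{L: Fenchel}. Set $f(u):=\|u\|^2$ on $X$ and $g(y):=M|y|^2$ on $Y$; both are in $\mathcal{C}$ and non-negative. The function $h(x):=\norma{x}^2$ is exactly the infimum appearing in Lemma~\ref{L: Fenchel}(e), hence $h\geq 0$ (in particular $h>-\infty$), $h(0)=0$, and $h(x)\leq\|x\|^2<\infty$ (taking $u=x$, $y=0$), so $h$ is proper. Parts (e), (c), and (d) of Lemma~\ref{L: Fenchel} together compute
\[
h^*(x^*)=f^*(x^*)+g^*(T^*x^*)=\tfrac14\|x^*\|_*^2+\tfrac1{4M}|T^*x^*|_*^2=\tfrac14 N(x^*)^2,
\]
where $N(x^*)^2:=\|x^*\|_*^2+\tfrac1M|T^*x^*|_*^2$.

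Second, I would identify $\norma{\cdot}$ as the norm on $X$ pre-dual to $N$. To see that $N$ is a norm equivalent to $\|\cdot\|_*$: positive definiteness follows from $N\geq\|\cdot\|_*$, positive homogeneity is clear, and the triangle inequality is Minkowski's inequality in $\R^2$ applied to the pairs $(\|x^*\|_*,|T^*x^*|_*/\sqrt M)$. The estimate $|T^*x^*|_*\leq\|T\|\,\|x^*\|_*$ gives $\|x^*\|_*\leq N(x^*)\leq\sqrt{1+\|T\|^2/M}\,\|x^*\|_*$. Since $\|\cdot\|_*$ and $|\cdot|_*$ are $w^*$-lower semicontinuous and $T^*$ is $w^*$-to-$w^*$ continuous, $N^2$ is convex and $w^*$-lsc, so $h^*\in\mathcal{C}^*$. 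Meanwhile, $h$ is convex (as an infimal convolution of convex functions) and bounded above by $\|\cdot\|^2$ on all of $X$; a convex function locally bounded above on a Banach space is continuous, so $h\in\mathcal{C}$. Fenchel--Moreau (Lemma~\ref{L: Fenchel}(f)) then gives $h=h^{**}$, and a short calculation---parametrizing $x^*=t\xi$ with $N(\xi)=1$, $t\geq 0$, and maximizing $t\xi(x)-t^2/4$ in $t$---yields $h(x)=[\sup\{|\xi(x)|:N(\xi)\leq 1\}]^2$. Thus $\norma{\cdot}$ is the norm on $X$ whose unit ball is the polar of the $N$-unit ball. This proves (b) by construction, and proves the equivalence in (a) by polarity applied to $\|\cdot\|_*\leq N\leq\sqrt{1+\|T\|^2/M}\,\|\cdot\|_*$.

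For (c), I would argue on the primal side. Given $x=u+Ty$, I would produce a representation $|x|=u'+Ty'$ with $|u'|\leq|u|$ and $|y'|\leq|y|$; the lattice-norm property of $\|\cdot\|$ and $|\cdot|$ then yields $\|u'\|^2+M|y'|^2\leq\|u\|^2+M|y|^2$, and a symmetric argument gives $\norma{|x|}=\norma{x}$. Monotonicity on the positive cone comes from applying the Riesz decomposition property in $X$ to $0\leq x_1\leq x_2\leq u^++Ty^+$ and lifting the resulting pieces back through $T$. The main obstacle I foresee is the construction of $u',y'$: the natural device is a \emph{sign trick} $u':=\sigma u,\ y':=\sigma y$ for a multiplier $\sigma$ encoding the polar decomposition $x=\sigma|x|$, but for this to succeed $\sigma$ must act compatibly through $T$. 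This is automatic in the paper's intended application ($X=c_0(\Gamma)$, $Y=\ell_2(\Gamma)$, $T$ the canonical inclusion), where the sign acts coordinatewise and the image of $T$ is a solid sublattice, reducing the matter to a pointwise verification.
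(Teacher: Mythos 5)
For parts (a) and (b) your argument is correct, and it rearranges the paper's ingredients rather than changing them. You compute the conjugate $h^*=\tfrac14 N^2$ first and then recover $h=\norma{\cdot}^2$ via Fenchel--Moreau, identifying $\norma{\cdot}$ as the polar norm of $N$, which delivers (a) and (b) simultaneously. The paper goes the other way: it first verifies directly that $\norma{\cdot}$ is an equivalent norm --- take near-optimal decompositions $x_i=u_i+Ty_i$ and apply Minkowski's inequality in $\R^2$ to the two-term sums --- and only then uses Lemma~\ref{L: Fenchel}(c),(d),(e) to compute the dual norm, from which the upper estimate in (a) follows by duality. Both routes are sound; the paper's avoids invoking Fenchel--Moreau and the ``convex $+$ locally bounded above $\Rightarrow$ continuous'' fact that you need to place $h$ in $\mathcal{C}$, so it is marginally more elementary, while yours is conceptually tidy.

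For (c) your primal route does not close, and you have correctly diagnosed the obstacle: producing $|x|=u'+Ty'$ with $|u'|\le|u|$ and $|y'|\le|y|$ requires the ``sign'' of $x$ to act compatibly through $T$, which fails for a general positive operator. The idea you are missing is the paper's dualization: from (b), $\norma{\cdot}_*^2=\|\cdot\|_*^2+\tfrac1M|T^*\cdot|_*^2$ is a lattice norm on $X^*$ (the dual norms $\|\cdot\|_*$ and $|\cdot|_*$ are lattice norms); hence $\norma{\cdot}_{**}$ is a lattice norm on $X^{**}$, and $\norma{\cdot}$, being the restriction of $\norma{\cdot}_{**}$ to $X\subset X^{**}$, is a lattice norm. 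No decomposition of $|x|$ is ever needed. Be aware, however, that the assertion ``$\norma{\cdot}_*$ is a lattice norm'' still uses more than bare positivity of $T$: one needs $|x^*|\le|z^*|$ to force $|T^*x^*|_*\le|T^*z^*|_*$, and $T=\bigl(\begin{smallmatrix}1&1\\1&1\end{smallmatrix}\bigr)$ on $\R^2$ (with $\ell_\infty$ on $X$ and $\ell_2$ on $Y$, say) shows that positivity alone does not give this. So the difficulty you ran into is genuine; it disappears in the intended application because there $T^*$ is the inclusion $\ell_1(\Gamma)\hookrightarrow\ell_2(\Gamma)$, a lattice homomorphism, for which $|T^*x^*|=T^*|x^*|$ holds and makes the dual step immediate.
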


\begin{proof}
It is easy to see that $|\!|\!|\cdot|\!|\!|>0$ outside the origin, $|\!|\!|\cdot|\!|\!|\le\|\cdot\|$,
and $|\!|\!|\lambda x|\!|\!| = |\lambda|  |\!|\!|x|\!|\!|$ whenever $\lambda\in\R$, $x\in X$.
Given $x_1,x_2\in X$ and $\epsilon>0$, for $i=1,2$ fix $u_i\in X$ and $y_i\in Y$ so that
$x_i=u_i+Ty_i$ and $\|u_i\|^2+M|Ty_i|^2\le \norma{x_i}^2+\epsilon$. Then clearly
$\norma{x_1+ x_2}^2\le\|u_1+u_2\|^2 +M|Ty_1+Ty_2|^2$, from which we obtain
\begin{align*}
\norma{x_1+x_2}&\le\sqrt{(\norm{u_1}+\norm{u_2})^2+\left(\sqrt{M}\,|Ty_1|+\sqrt{M}\,|Ty_2|\right)^2}\\
&\le \sqrt{\norm{u_1}^2+M|Ty_1|^2} + \sqrt{\norm{u_2}^2+M|Ty_2|^2}  \\
&\le\sqrt{\norma{x_1}^2+\epsilon}\, + \sqrt{\norma{x_2}^2+\epsilon}\,. 
\end{align*}
By $\epsilon\to0^+$ we obtain the triangle inequality for $\norma{\cdot}$. 
Consequently, $\norma{\cdot}$ is a norm on $X$, which is equivalent to $\|\cdot\|$ by the Open Mapping Theorem. Using Lemma~\ref{L: Fenchel}(c,d,e), it is not difficult to calculate that its dual norm
is given by $\norma{x^*}^2_*=\|x^*\|^2_*+(1/M)|T^*x^*|^2_*$. Thus
$\norma{\cdot}_* \le \sqrt{1+\frac{\|T\|^2}{M}}\,\norm{\cdot}_*$. It follows that
$\norm{\cdot}\le \sqrt{1+\frac{\|T\|^2}{M}}\,\norma{\cdot}$, which completes the proof of (a) and (b).

Now assume that $X,Y$ are Banach lattices and $T$ is positive. Then it is clear from (b) that
$(X^*,\norma{\cdot}_*)$ is a Banach lattice, and hence its dual $(X^{**},\norma{\cdot}_{**})$
is a Banach lattice as well.
Consequently $\norma{\cdot}$, which is the restriction of $\norma{\cdot}_{**}$ to $X$ (considered as a subspace of $X^{**}$), is a lattice norm.
\end{proof}

\end{document}